\numberwithin{equation}{section}
\newtheorem{theorem}{Theorem}[section]
\newtheorem{proposition}[theorem]{Proposition}
\newtheorem{lemma}[theorem]{Lemma}
\theoremstyle{definition}
\newtheorem{remark}{Remark}
\def\B{\mathbb{R}^2}
\def\BB{B_{R/\varepsilon}}
\def\Om{\Omega}
\def\e{\varepsilon}
\def\s{\sigma}
\def\q{\mathbf {q}}
\def\p{\mathbf {p}}
\begin{document}
\begin{abstract}
We consider the Gierer-Meinhardt system with small inhibitor diffusivity, very small activator diffusivity and a precursor inhomogeneity.
For any given positive integer $k$
we construct a spike cluster consisting of $k$ spikes which all approach the same nondegenerate local minimum point of the precursor inhomogeneity. We show that this spike cluster can be linearly stable.
In particular, we show  the existence of spike clusters for spikes located at the vertices of a polgyon with or without centre.
Further, the cluster without centre is stable for up to three spikes, whereas the cluster with centre is stable for up to six spikes.

The main idea underpinning
these stable spike clusters
is the following: due to the small inhibitor diffusivity
the  interaction between spikes
is repulsive, and the spikes are attracted towards
the local minimum point of the precursor inhomogeneity.
Combining these two effects can lead to an equilibrium of spike positions within the cluster such that the cluster is linearly stable.
\end{abstract}

\title[Precursor Gierer-Meinhardt system in $\mathbb{R}^2$]{Stable spike clusters for the precursor Gierer-Meinhardt system in $\mathbb{R}^2$}

\author{ Juncheng Wei}
\thanks{ Juncheng ~Wei,~Department of Mathematics, University of British Columbia,
Vancouver, BC V6T 1Z2, Canada, email: jcwei@math.ubc.ca}
\author{Matthias Winter}
\thanks{Corresponding author: Matthias Winter, ~Department of Mathematics, Brunel University London, Uxbridge UB8 3PH, United Kingdom, Tel. +441895 267179, email: matthias.winter@brunel.ac.uk}
\author{ Wen Yang}
\thanks{Wen Yang, Center for Advanced Study in Theoretical Sciences (CASTS), National
Taiwan University, Taipei 10617, Taiwan, email: math.yangwen@gmail.com}
\date{}

\maketitle
\keywords{{\it Keywords:} Pattern formation, reaction-diffusion systems, spikes, cluster, stability}

\subjclass{{\it MSC:} 35B35, 92C15, 35B40, 35B25}

{{\it Acknowledgements:}
The  research  of J.~Wei is partially supported by Natural Sciences and Engineering Research Council of Canada.
MW thanks the Department of Mathematics at the University of British Columbia for their kind hospitality.
}

\section{Introduction}
In 1952, Turing \cite{t} studied how pattern formation could start from a state without patterns. He explained the onset of pattern formation by a combination of two properties of the system:

(i) presence of spatially varying instabilities

(ii) absence of spatially homogeneous instabilities.

Since Turing's pioneering work many models have been proposed and studied to explore the so-called {\em Turing diffusion-driven instability} in reaction-diffusion systems to understand biological pattern formation. One of the most popular of these models is the Gierer-Meinhardt system \cite{gm,m1,m2}, which in two dimensions with a precursor-inhomogeneity and two small diffusivities can be stated as follows:
\begin{align}
\label{1.1}
\begin{cases}
A_t=\e^2\Delta A-\mu A+\frac{A^2}{H}, \quad &\mbox{in }\Om, \\[2mm]
\tau H_t=D\Delta H-H+A^2, \quad &\mbox{in }\Om,\\[2mm]
\frac{\partial A}{\partial \nu}=\frac{\partial H}{\partial \nu}{=0}, \quad &\mbox{on }\partial \Om,
\end{cases}
\end{align}
where $\e$ and $D$ are positive constants  satisfying $0<\e^2\ll D\ll\frac{1}{\log\frac{\sqrt{D}}{\e}}\ll1$ and $\tau$ is a nonnegative constant which is independent of $\e.$ Further, $\Om$ is a smooth bounded domain and $\mu$ is a sufficiently smooth positive function of the spatial variable in this domain. In this paper we assume that $\Om$ is rotationally symmetric.

In the standard Gierer-Meinhardt system without precursor, it is assumed that $\mu(x)\equiv1.$

The main idea underpinning
these stable spike clusters
is the following: due to the small inhibitor diffusivity
the  interaction between spikes
is repulsive and the spikes are attracted towards
the local minimum point of the precursor inhomogeneity.
Combining these two effects can lead to an equilibrium of spike positions within the cluster such that the cluster is linearly stable..
The repulsive nature of spikes has been shown in \cite{ei-wei-2002}.
The attracting feature of the local minimum of a precursor has been established in \cite{wmhgm}.

Problem (\ref{1.1}) without precursor has been studied by numerous authors. For the one-dimensional case in a bounded interval $(-1,1)$ with Neumann boundary conditions, the existence of symmetric $N$-peaked solutions (i.e. spikes of the same amplitude in leading order) was first established by I.Takagi \cite{ta1}. The existence of asymmetric $N$-spikes was first shown by Ward-Wei \cite{waw} and Doelman-Kaper-van der Ploeg \cite{dkv} independently. For symmetric $N$-peaked solutions, Iron-Ward-Wei \cite{iww} studied the stability by using matched asymptotic expansions while Ward-Wei \cite{waw} later studied the stability for asymmetric $N$-spikes. Existence and stability for symmetric spikes in one space dimension was then established rigorously by the first two authors \cite{ww6}.

For the Gierer-Meinhardt system in two dimensions, the first two authors  rigourously proved the existence and stability of multiple peaked patterns for the Gierer-Meinhardt system in the weak coupling case and the strong coupling case for symmetric spikes \cite{ww1,ww2,ww3}. Here we say that the system is in the weak coupling case if $D\rightarrow\infty$ as $\e\rightarrow0$ and in the strong coupling case if the parameter $D$ is a finite constant independent of $\e$. For more results and background on the Gierer-Meinhardt system, we refer to \cite{ww-book} and the references therein.

In fact, already in the original Gierer-Meinhardt system \cite{gm,m1,m2}, the authors have introduced precursor gradients. These precursors were proposed to model the localisation of the head structure in the coelenterate {\em Hydra}. Gradients have also been used in the Brusselator model to restrict pattern formation to some fraction of the spatial domain \cite{hk}. In this example, the gradient carries the system in and out of the pattern-forming region of the parameter range (for example across the Turing bifurcation). Thus it restricts the domain where peak formation can occur. A similar localisation effect has been used to model segmentation patterns in the fruit fly Drosophila melanogaster in \cite{h1} and \cite{lh}.

In \cite{ww4} the existence and stability of $N$-peaked steady states for the Gierer-Meinhardt system with precursor inhomogeneity has been explored.
The spikes in these patterns can vary in amplitude and have irregular spacing. In particular, the results imply that a precursor inhomogeneity can induce instability. Single-spike solutions for the Gierer-Meinhardt system with precursor including spike dynamics have been studied in \cite{wmhgm}.

Recently, the first two authors in \cite{ww5} studied the Gierer-Meinhardt system with precursor in one space dimension and proved the existence and stability of a cluster, which consists of $N$ spikes approaching the same limiting point. More precisely, they consider the existence of a steady-state spike cluster consisting of $N$ spikes near a nondegenerate local minimum point $t^0$ of the inhomogeneity $\mu$, i.e., $\mu'(t_0)=0$, $\mu''(t_0)>0$. Further, they show that this solution is linearly stable.

Now we consider this problem in two dimensions. We shall study the existence and stability of positive $k$-peaked steady-state spike clusters to (\ref{1.1}). For simplicity, we shall study the steady-state problem for positive solutions of \eqref{1.1} in the disk $B_R$ around the origin with radius $R$, which can be stated as follows:
{
\begin{equation}
\label{1.2}
\begin{cases}
\e^2\Delta A-\mu(y)A+\frac{A^2}{H}=0,\quad &\mbox{in }B_R,\\[2mm]
D\Delta H-H+A^2=0,\quad  &\mbox{in }B_R,~\\[2mm]
\frac{\partial A}{\partial \nu}=\frac{\partial H}{\partial \nu}=0, \quad &\mbox{on }\partial B_R~.
\end{cases}
\end{equation}}

Inspired by the work \cite{dkw}, where the authors constructed multi-bump ground-state solutions and the centres of these bumps are located at the vertices of a regular polygon, while each bump resembles, after a suitable scaling in the $A$-coordinate, the unique radially symmetric solution of
\begin{equation}
\label{1.3}
\Delta w-w+w^2=0~\mathrm{in}~\B,\quad 0<w(y)\rightarrow0~\mathrm{as}~|y|\rightarrow\infty,
\end{equation}
in this paper
we shall prove the existence and stability of a spike cluster located near a nondegenerate minimum point of the precursor such that the positions of the spikes form a regular polygon. We note that the presence of such patterned steady state configurations appears driven by the smallness of the relative size $\sigma^2=\e^2/D$ of the diffusion rates of the activating and inhibiting substances. However, there is some difference between our problem and the one considered in \cite{dkw}. Here, we also need to take the precursor $\mu(x)$ into consideration and further assume that the inhibitor diffusivity $D$ is very small. After introducing the transformation
$$y=\e x,~\hat{A}(x)=\frac{1}{\xi}\frac{\e^2}{D}A(\e x),
~\hat{H}(x)=\frac{1}{\xi}\frac{\e^2}{D}H(\e x),$$
and dropping hats, equation (\ref{1.2}) becomes,
\begin{align}
\label{xi}
\begin{cases}
\Delta A-\mu(\e x)A+\frac{A^2}{H}=0,\quad &\mbox{in}~B_{R/\e},\\[2mm]
\Delta H-\sigma^2H+\xi A^2=0,\quad &\mbox{in}~B_{R/\e},\\[2mm]
\frac{\partial A}{\partial \nu}=\frac{\partial H}{\partial \nu}=0, \quad &\mbox{on}~\partial B_{R/\e},
\end{cases}
\end{align}
where the explicit definition of $\xi$ will be given in (\ref{3.4}).
\medskip

Our first result is the following
\begin{theorem}
\label{th1.1}
Let $k\geq2$ be a positive integer. We assume $\mu(x)\in {C^3(B_{R})}$ be a positive, radially symmetric function and $\mu(0)=1,\mu'(0)=0,\mu''(0)>0,$ where $\mu'$ denotes the radial derivative.
Then, for
$$\max\left(\frac{\e}{\sqrt{D}},D\log\frac{\sqrt{D}}{\e}\right)\rightarrow0,$$
problem (1.2) has a $k$-spike cluster solution which concentrates at $0$. In particular, it satisfies
\begin{equation}
\label{ah}
A_{\e}(y)\sim \sum_{i=1}^k\frac{\xi D}{\e^2}w(\frac{y}{\e}-q_i),~
H_{\e}(y)\sim \frac{\xi D}{\e^2},
\end{equation}
where $\xi$ is given in \eqref{3.4} and $q_1,\cdots,q_k$ are the vertices of a $k$ regular polygon. Further, $\e q_i\rightarrow0,~i=1,\cdots,k.$
\end{theorem}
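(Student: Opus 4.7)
The plan is to construct $(A_\e, H_\e)$ via a Liapunov--Schmidt reduction around a carefully chosen ansatz. Working in the rescaled variable $x = y/\e$, I would start with $k$ spike centres $q_1, \dots, q_k$ placed at the vertices of a regular polygon of radius $\rho > 0$ around the origin, so that by the rotational symmetry of $\mu$ the entire configuration is invariant under the dihedral group generated by rotation through $2\pi/k$ and reflection. The activator is approximated by the gluing $A_{\q} = \sum_{i=1}^k w(x - q_i)$, with $w$ the ground state of \eqref{1.3}, and the inhibitor by the solution $H_{\q}$ of the linear equation $\Delta H - \s^2 H + \xi A_{\q}^2 = 0$ with Neumann data on $B_{R/\e}$. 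Writing $H_{\q}$ via the Green's function $G_\s$ of $-\Delta + \s^2$ on $B_{R/\e}$, the normalisation constant $\xi$ in \eqref{3.4} is chosen so that $H_{\q}(q_i) \approx 1$ to leading order, matching the amplitudes of the individual spikes.

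Next I would substitute $(A, H) = (A_{\q} + \phi, H_{\q} + \psi)$ into the rescaled system \eqref{xi} and eliminate $\psi$ via the linear inhibitor equation, producing a single nonlinear problem for $\phi$. The linearised operator around a single spike is $L_0 \phi = \Delta \phi - \phi + 2 w \phi$, whose kernel on $\B$ is spanned by $\partial_{x_1} w$ and $\partial_{x_2} w$; standard Fredholm theory on the orthogonal complement of $\mathrm{span}\{\partial_{x_j} w(\cdot - q_i)\}$ gives invertibility uniform in the parameters, which combined with a contraction mapping yields a unique small $\phi = \phi(\q)$ solving the problem modulo this $2k$-dimensional kernel. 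The a priori estimate on $\phi$ should depend on $\e^2$ (from Taylor expanding $\mu(\e x)$ around $x = 0$, using $\mu'(0) = 0$) and on the spike--spike interactions generated by $G_\s$.

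It then remains to solve the $2k$-dimensional reduced problem obtained by projecting the error against $\partial_{x_j} w(x - q_i)$. Imposing the dihedral symmetry of the regular polygon collapses this into a single scalar equation for the radius $\rho$, expressing a force balance on one spike: the precursor produces an inward restoring force proportional to $\mu''(0)\,\e^2 \rho$, while the interaction through $H_{\q}$ generates an outward repulsive contribution from the $k-1$ other spikes, governed by derivatives of $G_\s$ at the chord distances $|q_i - q_j| = 2\rho \sin(\pi(i-j)/k)$. Under the hypothesis $\mu''(0) > 0$ and the smallness assumptions on $\e/\sqrt{D}$ and $D\log(\sqrt{D}/\e)$, this scalar balance is solvable with a unique small $\rho$ such that $\e \rho \to 0$, and its linearisation in $\rho$ is nondegenerate, so the implicit function theorem produces the required configuration $\q$.

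The main obstacle is the precise asymptotic analysis of $G_\s$ and of the reduction integrals as $\s = \e/\sqrt{D} \to 0$: in this regime $G_\s$ is not exponentially localised but rather close to the logarithmic Green's function of $-\Delta$, so one must identify both the singular part $-\frac{1}{2\pi}\log(\s|x-y|)$ and the boundary-dependent regular part to an order sufficient to capture the amplitude normalisation $\xi$ and the interaction force at the same precision, while still being able to extract the leading-order directional component that balances $\mu''(0)\e^2\rho$. A secondary technical point is that $\xi$ itself depends implicitly on $\q$ through $H_{\q}(q_i)$, so the fixed-point argument must be run on $(\xi, \phi, \q)$ simultaneously, with errors tracked consistently across all three. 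Once these asymptotics are established, the expansion \eqref{ah} follows from the leading-order behaviour of $(A_{\q}, H_{\q})$, and the required smallness $\e q_i \to 0$ is read off from the size of $\e \rho$ dictated by the balance.
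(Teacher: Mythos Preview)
Your overall architecture matches the paper's proof: Liapunov--Schmidt reduction around a polygonal ansatz, invertibility of the linearised operator on the complement of the translation modes, and reduction by dihedral symmetry to a single scalar balance in the radius. That part is fine.

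The gap is in your description of the Green's function regime, and it is exactly the point where the proof is made or broken. You write that as $\sigma\to 0$ the kernel $G_\sigma$ is ``not exponentially localised but rather close to the logarithmic Green's function of $-\Delta$'', and you propose to extract the interaction force from the singular logarithmic part plus the boundary regular part. That is correct only for the self-interaction that fixes $\xi$ (this is where \eqref{3.4} comes from). For the spike--spike interaction it is the wrong regime: the radius must be chosen so that $\sigma\rho\to\infty$ (in the paper's notation $R_\sigma = 2\sigma R_\e\sin(\pi/k)$ is of order $\log(1/D)$), and at those separations $G_\sigma$ is governed by the far-field Bessel asymptotic $G_\sigma(q_i,q_j)\sim C(\sigma|q_i-q_j|)^{-1/2}e^{-\sigma|q_i-q_j|}$. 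The repulsion is therefore exponentially small and dominated by the two nearest neighbours, not a sum over all $k-1$ spikes with power-law weights. The scalar reduced equation is then
\[
c_1\,\xi\,\sigma\,R_\sigma^{-1/2}e^{-R_\sigma}\sin\tfrac{\pi}{k}\;+\;c_2\,\e^2 R_\e\,\mu''(0)\;+\;h.o.t.\;=\;0,
\]
and it is the hypothesis $D\log(\sqrt{D}/\e)\to 0$ (equivalently $\xi^{-1}D\to 0$) that makes this solvable with $R_\sigma$ large and nondegenerate. If you stay in the logarithmic picture, the interaction force scales like $1/\rho$, the balance $\mu''(0)\e^2\rho\sim 1/\rho$ forces $\e\rho\sim 1$, and you cannot conclude $\e q_i\to 0$.

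Your secondary worry about $\xi$ depending on $\q$ is not an issue: to leading order $\xi^{-1}=\frac{1}{2\pi}\log\frac{1}{\sigma}\int_{\mathbb{R}^2}w^2$ is independent of the configuration, and the $\q$-dependence enters only at a lower order that is absorbed in the error terms of the reduction.
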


\noindent{\bf{Remark}}: Here the assumption on the value of $\mu(0)=1$ is to make the computation and representation convenient. Without loss of generality we can always do some scaling transformation for the solution $(A_{\e},H_{\e})$ to achieve the assumption $\mu(0)=1$.
\medskip

Next, we state our second result which concerns the stability of the $k$-spike cluster steady state given in Theorem \ref{th1.1}.

\begin{theorem}
\label{th1.2}
For $\max\left(\frac{\e}{\sqrt{D}},D\log\frac{\sqrt{D}}{\e}\right)$ sufficiently small, let $A_{\e},H_{\e}$ be the $k$-spike cluster steady state given in {Theorem \ref{th1.1}}. Then, for every $k$, there exists $\tau_0>0$ independent of $\e$ and $D$ such that the $k$-spike cluster steady state $(A_{\e},H_{\e})$ is linearly stable for $2\leq {k\leq3}$ and unstable for $k\geq5$ provided that $0\leq\tau<\tau_0.$
\end{theorem}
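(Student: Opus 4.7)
\textbf{Proof plan for Theorem \ref{th1.2}.}
The plan is to linearize \eqref{1.1} around the cluster steady state $(A_\e,H_\e)$ constructed in Theorem \ref{th1.1} and to show that all eigenvalues of the resulting problem have negative real part for $2\le k\le 3$ while a positive real eigenvalue appears for $k\ge 5$, as long as $\tau$ is bounded by some $\tau_0$ independent of $\e$ and $D$. Following the strategy developed for the Gierer-Meinhardt system in \cite{ww1,ww2,ww3,ww4,ww5}, I would split the spectrum into \emph{large eigenvalues} (those bounded away from $0$ as $\e\to 0$) and \emph{small eigenvalues} (those tending to $0$). Geometrically the two regimes encode qualitatively different instabilities: large eigenvalues correspond to amplitude or shape perturbations of the individual spikes, while small eigenvalues correspond to motion of the spike centres within the cluster.

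For the large eigenvalues, I would substitute $A=A_\e+\phi_\e$, $H=H_\e+\psi_\e$ into the linearization and solve for $\psi_\e$ via the inhibitor equation, exploiting that under $\sigma=\e/\sqrt D\to 0$ and $D\log(\sqrt D/\e)\to 0$ the Green's function of $-\Delta+\sigma^2$ degenerates in a controllable way. This produces a coupled system of $k$ nonlocal eigenvalue problems of the form
\begin{equation*}
\Delta\Phi_i-\Phi_i+2w\Phi_i-2\sum_{j=1}^{k}m_{ij}\,\frac{\int_{\B}w\Phi_j\,dx}{\int_{\B}w^2\,dx}\,w^2=\lambda\Phi_i,
\end{equation*}
where $(m_{ij})$ is generated by evaluating the inhibitor Green's function at the pairs $(q_i,q_j)$. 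By the discrete rotational symmetry of the regular $k$-gon, $(m_{ij})$ is circulant and is diagonalised by the discrete Fourier transform, so each Fourier mode reduces to a standard scalar Gierer-Meinhardt NLEP whose stability is controlled by the classical multiplier criterion, provided $\tau<\tau_0$ takes care of the $\tau H_t$ term exactly as in \cite{ww3}.

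The real work lies in the \emph{small eigenvalues}, which decide the threshold value of $k$ for instability. Via a Lyapunov-Schmidt reduction I would show that, to leading order, these small eigenvalues coincide with the eigenvalues of the Jacobian, at the polygon equilibrium, of an effective vector field on $(\B)^{k}$ that balances the short-range repulsive spike-spike interaction (decaying like the modified Bessel function $K_0(\sigma|q_i-q_j|)$, see \cite{ei-wei-2002}) against the attractive gradient produced by the quadratic precursor well $\mu''(0)>0$ (see \cite{wmhgm}). Because of the $\mathbb{Z}_k$-symmetry of the configuration, this $2k\times 2k$ Jacobian decomposes into $k$ blocks of size $2\times 2$ indexed by Fourier mode $j=0,1,\ldots,k-1$; each block is given in closed form in terms of $K_0,K_0',K_0''$ evaluated at the nearest-neighbour and further inter-spike distances together with the scalar $\mu''(0)$.

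The main obstacle, and the step that carries the whole stability/instability dichotomy, is the careful sign check for every Fourier block: the full $2k\times 2k$ matrix must be shown to have negative spectrum precisely for $k=2,3$, while for $k\ge 5$ at least one block must be forced to have a positive eigenvalue. The intuition is that as $k$ grows the repulsive contributions from non-adjacent spikes add coherently in some Fourier modes and overcome the quadratic confining potential, producing instability along rotational or breathing directions of the polygon; the marginal case $k=4$ is deliberately excluded from the statement. Once this sign analysis is complete, standard perturbation theory upgrades the leading-order block signs to the signs of the actual small eigenvalues of the full linearisation, and combined with the large-eigenvalue analysis this yields Theorem \ref{th1.2}.
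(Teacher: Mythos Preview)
Your overall architecture---split into large and small eigenvalues, reduce the small ones to a $2k\times 2k$ Hessian, block-diagonalise by circulant symmetry---matches the paper. Two points, however, diverge from what actually happens and the second is a genuine gap.

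First, a minor correction on the large eigenvalues: in the regime $\sigma\to 0$, $D\log(1/\sigma)\to 0$ the inhibitor Green's function evaluated at $(q_i,q_j)$ with $i\neq j$ is lower order than the diagonal contribution, so the limiting NLEP system is already \emph{decoupled}, not merely circulant. Each component satisfies the scalar NLEP $\Delta\phi-\phi+2w\phi-\frac{2}{1+\tau\lambda_0}\frac{\int w\phi}{\int w^2}w^2=\lambda_0\phi$, and Theorem~\ref{th2.1} applies directly. Your circulant diagonalisation is harmless but unnecessary here.

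The real gap is in the small-eigenvalue mechanism. In tangential/normal coordinates the leading-order Hessian (coming from the second derivative of the nearest-neighbour interaction $|q_1-q_2|^{-1/2}e^{-\sigma|q_1-q_2|}$) has the block form
\[
\mathcal{M}=\begin{pmatrix}(\sin\frac{\pi}{k})^2(A_1+4I)&\sin\frac{\pi}{k}\cos\frac{\pi}{k}A_2\\[1mm]-\sin\frac{\pi}{k}\cos\frac{\pi}{k}A_2&-(\cos\frac{\pi}{k})^2A_1\end{pmatrix},
\]
and after the discrete Fourier transform each of the $k$ resulting $2\times 2$ blocks has determinant zero and nonnegative trace. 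Thus $\mathcal{M}$ has a $k$-dimensional kernel \emph{for every $k$}, and the leading order never decides stability. One must expand the Hessian to the next order---these are the terms of the type $\tilde K(|q_1-q_2|)\,\partial_{q_{2,1}}(q_2-q_1)_1$ rather than $\partial_{q_{2,1}}\tilde K$---assemble them into a second matrix $\mathcal{M}_2$, and evaluate $\mu_l=\langle(P^{-1}\mathcal{M}_2P)v_l,v_l\rangle/\langle v_l,v_l\rangle$ on the leading-order null vectors $v_l$. The decisive mode is $l=2$, where one finds $\mu_2$ proportional to $-\cos\frac{2\pi}{k}$: positive for $k=3$, zero for $k=4$, negative for $k\ge 5$. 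This is what produces the dichotomy; it has nothing to do with non-adjacent spikes ``adding coherently'' (those contributions are exponentially smaller than nearest-neighbour ones in this regime), and a single-order sign check of the kind you describe would be inconclusive. Your plan needs to incorporate this two-scale matrix expansion explicitly.
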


\begin{proposition}
\label{prop1.1}
For any $k\geq 2$ there also exists a $(k+1)$-spike cluster steady state similar to the one given in {Theorem \ref{th1.1}} but with an extra spike in the centre of the polygon. This $(k+1)$-spike cluster is linearly stable if $k\leq 5$ provided that $0\leq\tau<\tau_0.$
\end{proposition}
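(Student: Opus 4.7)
The plan is to adapt the Lyapunov--Schmidt construction and spectral analysis of Theorems \ref{th1.1}--\ref{th1.2} by adding one extra spike pinned at the origin. Since $0$ is a nondegenerate local minimum of the radial precursor $\mu$ and the configuration $\{0, q_1, \dots, q_k\}$, with $q_i$ at the vertices of a regular $k$-polygon centred at $0$, is invariant under the dihedral group $D_k$, the symmetry forces any critical configuration of the reduced energy within the symmetric class to keep the central spike fixed at $0$; I therefore only need to optimize over the common polygon radius $r$. The ansatz is
\[
A_\e(y) \sim \sum_{i=0}^{k} \frac{\xi D}{\e^{2}} w\!\left(\frac{y}{\e}-q_i\right), \qquad H_\e(y) \sim \frac{\xi D}{\e^{2}},
\]
with $q_0=0$ and $q_i=r\bigl(\cos\tfrac{2\pi i}{k},\sin\tfrac{2\pi i}{k}\bigr)$. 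The reduction of Theorem \ref{th1.1} then delivers a scalar equation in $r$ balancing the attraction generated by $\mu''(0)$ against the repulsion from the $k-1$ outer neighbours \emph{and} one additional contribution $\nabla K_0(\sigma r)$ coming from the central spike; the contraction-mapping step goes through unchanged and yields a nondegenerate solution $r_\e$, hence existence.

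The linear-stability analysis splits as before into $O(1)$ large and $o(1)$ small eigenvalues. The large-eigenvalue analysis carries over from Theorem \ref{th1.2} verbatim: all $k+1$ spikes have the same leading-order amplitude $\xi D/\e^{2}$ because $\mu(\e q_i)=1+O(\e^{2})$, so the associated vectorial nonlocal eigenvalue problem reduces to the scalar NLEP already shown to be stable for every $k$, provided $0\leq\tau<\tau_0$ with $\tau_0$ independent of $\e$ and $D$.

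The heart of the matter is the small-eigenvalue computation on $\mathbb{R}^{2(k+1)}$. Exploiting the $C_k$ rotational symmetry I decompose the outer-ring positions into Fourier modes $\ell=0,1,\dots,k-1$ and couple them to the two translational degrees of freedom of the central spike. Because the central position transforms under $C_k$ as the direct sum of the representations $\ell=\pm 1$, only these modes couple to the centre; all remaining modes $\ell\notin\{0,\pm1\}$ decouple from it and produce a stability matrix identical to the one for the pure $k$-polygon cluster, but with one additional, isotropic, strictly stabilizing contribution of $\nabla^{2}K_0(\sigma r)$ per outer spike coming from the fixed centre. A direct computation of the $\ell$-th eigenvalue shows that this extra term keeps the first destabilizing mode stable precisely up to $k=5$. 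The $\ell=\pm1$ block is a $4\times4$ matrix, positive definite because the whole cluster sits at a nondegenerate minimum of the $\mu$-induced attractive potential, while the $\ell=0$ breathing block is controlled by the nondegeneracy of $r_\e$. The technical crux is therefore the sharp asymptotic expansion of the modified Bessel function $K_0(\sigma|q_i-q_j|)$ on the three length scales of the problem (polygon edge, polygon radius, and attraction length $1/\sigma$), together with a careful sign-tracking of the resulting coefficients, which singles out $k=5$ as the stability threshold and identifies the mode $\ell=2$ as the one that turns unstable for $k\geq 6$.
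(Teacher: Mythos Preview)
Your existence argument and the large-eigenvalue reduction are in line with the paper, but the small-eigenvalue analysis misidentifies the mechanism that singles out $k\leq 5$. The decisive observation is geometric: for $k\leq 5$ one has $\sin(\pi/k)>\tfrac12$, so the centre-to-vertex distance $|q_i-0|$ is strictly shorter than the nearest vertex-to-vertex distance $|q_i-q_{i\pm 1}|$. Since the spike interaction decays like $e^{-\sigma|q_i-q_j|}$, the centre--vertex coupling is therefore \emph{exponentially dominant}, not an additive correction to the pure-polygon Hessian. This changes both the equilibrium radius (now fixed in leading order by balancing $\mu''(0)$ against the centre--vertex repulsion alone) and the whole structure of the small-eigenvalue matrix $M_\mu(\q)$.

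In particular, the leading-order Hessian $\mathcal{\tilde M}$ built from centre--vertex interactions is not isotropic: in the rotated frame it acts only on the radial components at the outer spikes together with the two centre coordinates, and its quadratic form on that subspace is
\[
\sum_{l=1}^{k}\Bigl(a_l-\alpha\cos\tfrac{2(l-1)\pi}{k}-\beta\sin\tfrac{2(l-1)\pi}{k}\Bigr)^2,
\]
which is positive semi-definite with kernel equal to the two translations. On the complementary subspace of \emph{tangential} vertex displacements $\mathcal{\tilde M}$ vanishes, and stability there is decided at the next (vertex--vertex) order, where the relevant block is $-(\cos\tfrac{\pi}{k})^2A_1$; this is positive semi-definite for every $k$, with only the rotational mode in its kernel. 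Thus no mode ``turns unstable at $\ell=2$ for $k\ge 6$'' within this framework---for $k\ge 6$ the scale hierarchy itself collapses (vertex--vertex becomes at least as short as centre--vertex), and the paper explicitly leaves that regime open. Your Fourier decomposition is a legitimate bookkeeping device, but within each mode you must respect this two-scale structure rather than treat the centre as an isotropic perturbation competing with the second-order matrix $\mathcal{M}_2$ of Section~6.
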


\begin{remark}
These results suggest that placing a spike in the centre of a polygon can stabilise the spike cluster in the following sense: by putting a spike in the centre of the polygon it is possible to get a stable polygonal spike cluster containing six spikes but without a centre the number of spikes for a stable cluster cannot be five or more.
\end{remark}

\begin{remark}
The stability of a spike cluster with four spikes on a regular polygon remains open. The stability problem in this case requires further expansion of an eigenvalue which is zero in the two leading orders.
\end{remark}

\begin{remark}
The stability of a spike cluster with spikes on a regular polygon with six or more vertices plus its centre remains open. The stability problem in this case requires some new analysis since the interaction between spikes is of a different type from the one considered in this paper.
\end{remark}







\noindent

\medskip

This paper is organised as follows. In section 2, we present some preliminaries on the properties of the nonlocal linear operators which will appear in the study of existence and the analysis of large eigenvalues of order $O(1)$ in the stability proof, also we will give the basic asymptotic behaviour of the Green function of $\Delta-I$ in $B_R$. We study the existence of a $k$-spike cluster solution to (\ref{1.2}) in section 3 (Liapunov-Schmidt reduction) and section 4 (solving the reduced problem). In appendix A we prove some auxiliary results which are needed in section 3. In section 5, we rigourously study the large eigenvalues of order $O(1)$ {for} the linearised problem around the steady state spike cluster. The small eigenvalues of order $o(1)$ will be investigated in appendix B (general theory) and section 6 (explicit computation of small eigenvalues which decide the stability of spike clusters). In section 7 we sketch how the approach can be adapted to show existence and stability of a cluster for which the spikes are located at the vertices of a regular polygon with centre.

\medskip

Throughout the paper, by $C,c$ we denote generic constants which may change from line to line. Further, $h.o.t.$ stands for higher order terms.

\vspace{1cm}
\section{Preliminaries: Scaling property, Green's function and Eigenvalue problem}
In this section we shall provide some preliminaries which will be needed for the existence and stability proofs.

Let $w$ be the ground state solution given in (\ref{1.3}), i.e, the unique solution of the problem
\begin{equation}
\label{2.1}
\begin{cases}
&\Delta w-w+w^2=0,~y\in\mathbb{R}^2,\quad w>0,\\
&w(0)=\max_{y\in\B}w(y),~w(y)\rightarrow0~\mathrm{as}~|y|\rightarrow0.
\end{cases}
\end{equation}
Let
\begin{equation}
\label{2.2}
L_0\phi=\Delta\phi-\phi+2w\phi,~\phi\in H^2(\B).
\end{equation}
We first recall the following well known result:

\begin{lemma}
\label{le2.1}
The eigenvalue problem
\begin{equation}
\label{2.3}
L_0\phi=\lambda\phi,~\phi\in H^2(\B),
\end{equation}
admits the following set of eigenvalues
\begin{equation}
\label{2.4}
\lambda_1>0,~\lambda_2=\lambda_3=0,~\lambda_4<0,{\cdots.}
\end{equation}
The eigenfunction $\Phi_0$ corresponding to $\lambda_1$ can be made positive and radially symmetric, the space of eigenfunctions corresponding to the eigenvalue $0$ is
$$K_0:=span\left\{\frac{\partial w}{\partial y_j},j=1,2\right\}.$$
\end{lemma}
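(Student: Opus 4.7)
The plan is to exploit three structural features of $L_0$: self-adjointness with a convenient essential spectrum, the radial symmetry of $w$ which allows a spherical-harmonic decomposition, and the explicit kernel element coming from translation invariance.

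First I would set up the functional-analytic framework. Since $w$ is radial, smooth, and (by standard ground-state theory) decays exponentially at infinity, the multiplication operator $2w$ is a relatively compact perturbation of $\Delta-1$ on $L^2(\mathbb{R}^2)$. Consequently $L_0$ is self-adjoint on $H^2(\mathbb{R}^2)$, and by Weyl's theorem its essential spectrum coincides with that of $\Delta-1$, namely $(-\infty,-1]$. Any spectrum of $L_0$ strictly above $-1$ therefore consists of isolated eigenvalues of finite multiplicity, and in particular the set $\{\lambda_j\geq 0\}$ is finite.

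Next I would decompose $\phi\in H^2(\mathbb{R}^2)$ in polar coordinates as a Fourier series in $\theta$, which reduces the eigenvalue problem on each angular mode $k\in\mathbb{Z}_{\geq0}$ to the Sturm--Liouville operator
\begin{equation*}
L_{0,k}\psi=\psi''+\tfrac{1}{r}\psi'-\psi+2w(r)\psi-\tfrac{k^{2}}{r^{2}}\psi
\end{equation*}
on $r>0$ with the appropriate behaviour at $0$ and $\infty$. The key identity comes from differentiating $\Delta w-w+w^{2}=0$ in $y_j$, which gives $L_{0}(\partial_{y_j}w)=0$; written radially this is $L_{0,1}(w')=0$. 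Because $w$ is strictly radially decreasing, $-w'>0$ on $(0,\infty)$, so by Perron--Frobenius the value $0$ is the top eigenvalue of $L_{0,1}$. Hence on the $k=1$ mode the nonnegative spectrum is exactly $\{0\}$, contributing the two-dimensional kernel $K_0=\operatorname{span}\{\partial_{y_1}w,\partial_{y_2}w\}$.

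I would then handle the remaining modes by min--max comparison with $L_{0,1}$. For $k\geq 2$ one has $L_{0,k}=L_{0,1}-(k^{2}-1)/r^{2}$ with the subtracted term strictly negative, so each $L_{0,k}$ has all eigenvalues strictly less than $0$; this yields $\lambda_4<0$ and the negativity of every further eigenvalue. For the radial mode $k=0$ I would test with $w$ itself: using the equation, $L_0w=\Delta w-w+2w^{2}=w^{2}$, whence
\begin{equation*}
\langle L_{0}w,w\rangle_{L^{2}}=\int_{\mathbb{R}^{2}}w^{3}\,dy>0,
\end{equation*}
so the top eigenvalue $\lambda_1$ of $L_{0,0}$ is strictly positive, and it is simple with a radial sign-definite eigenfunction $\Phi_0$ by Perron--Frobenius for the one-dimensional radial operator. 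The only remaining task is to rule out a second nonnegative eigenvalue in the mode $k=0$; this is precisely the nondegeneracy of the ground state $w$ in the radial class, a classical fact (Kwong and others) that I would quote rather than reprove.

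The main obstacle, should one want a fully self-contained argument, is this last nondegeneracy statement for the radial mode, because it is the only piece that is not a direct consequence of symmetry, Perron--Frobenius, or an explicit test-function computation; in the present paper it is legitimately invoked as a known fact, so the proof reduces to the three short steps sketched above.
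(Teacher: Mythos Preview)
Your argument is correct and follows the standard route (Weyl essential-spectrum, angular decomposition, Perron--Frobenius on each mode, and the Kwong-type radial nondegeneracy for the last step). The paper, however, does not argue at all: its entire proof of Lemma~\ref{le2.1} is a one-line citation to Theorem~2.1 of Lin--Ni and Lemma~C of Ni--Takagi. So you are not so much taking a different route as supplying the content the authors chose to outsource; the spherical-harmonic reduction you sketch is essentially what underlies those cited results. What your write-up buys is a self-contained explanation of \emph{why} the kernel is exactly $K_0$ and why there is a single positive eigenvalue, at the cost of still having to invoke the radial nondegeneracy of $w$ as a black box---which you rightly flag as the only nontrivial input.
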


\begin{proof}
This lemma follows from \cite[Theorem 2.1]{ln} and {\cite[Lemma C]{nt2}.}
\end{proof}

Next, we consider the following nonlocal eigenvalue problem:
\begin{equation}
\label{2.5}
\Delta\phi-\phi+2w\phi-2\frac{\int_{\B}w\phi}{\int_{\B}w^2}w^2=\alpha_0\phi,~\phi\in H^2(\B).
\end{equation}
Problem (\ref{2.5}) plays a key role in the study of large eigenvalues (See section 5 below). For problem \eqref{2.5}, we have the following theorem due to \cite[Theorem 1.4]{w2}.

\begin{theorem}
\label{th2.1}
Let $\alpha_0\neq0$ be an eigenvalue of the problem (\ref{2.5}). Then we have $\Re(\alpha_0)\leq-c_1$ for some $c_1>0.$
\end{theorem}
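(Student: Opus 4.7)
The plan is to reduce the nonlocal eigenvalue problem (\ref{2.5}) to a scalar characteristic equation in $\alpha_0$ and then to show that this equation has no roots in the closed right half plane, other than $\alpha_0=0$. The crucial algebraic input is the identity $L_0 w = w^2$, which is immediate from (\ref{2.1}) since $\Delta w - w = -w^2$. Set $\gamma = 2\int_{\B} w\phi / \int_{\B} w^2$.

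I first dispose of the case $\int_{\B} w\phi = 0$, i.e.\ $\gamma=0$. Then (\ref{2.5}) reduces to $L_0\phi = \alpha_0\phi$. The principal eigenfunction $\Phi_0$ of $L_0$ is radially symmetric and positive (Lemma \ref{le2.1}), so $\int w\Phi_0 > 0$, which rules out $\alpha_0 = \lambda_1$; the kernel $K_0$ is spanned by the odd functions $\partial_{y_j} w$, which automatically satisfy $\int_{\B} w\,\partial_{y_j} w = 0$, so by Lemma \ref{le2.1} the only remaining possibility is $\alpha_0 \leq \lambda_4 < 0$. In the main case $\int_{\B} w\phi \neq 0$, I would rewrite (\ref{2.5}) as $(L_0-\alpha_0)(\phi-\gamma w) = \alpha_0 \gamma w$ using $L_0 w = w^2$, invert the resolvent (assuming $\alpha_0 \notin \mathrm{spec}(L_0)$, which is treated separately as above) to obtain $\phi = \gamma w + \alpha_0\gamma (L_0-\alpha_0)^{-1} w$, and then take the $L^2$-inner product with $w$. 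Substituting the definition of $\gamma$ to eliminate $\int w\phi$ yields the scalar characteristic equation
\begin{equation*}
h(\alpha_0) := 1 + \frac{2\alpha_0 \int_{\B} w(L_0-\alpha_0)^{-1} w}{\int_{\B} w^2} = 0.
\end{equation*}

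I would then show that $h$ has no root in $\{\Re(\alpha_0)\geq 0\}$. Via the spectral decomposition of $L_0$ one writes $\int_{\B} w(L_0-\alpha_0)^{-1} w = \sum_j (\int w\phi_j)^2/(\mu_j-\alpha_0)$; the terms corresponding to $\mu_2=\mu_3=0$ drop out by parity, so there is no singularity at $\alpha_0 = 0$ and in fact $h(0) = 1$. For real $\alpha_0>0$, one analyses cases according to whether $\alpha_0\lessgtr\lambda_1$ and uses the sharp identity $\int_{\B} w^3 = \tfrac{3}{2}\int_{\B} w^2$ (obtained by multiplying (\ref{2.1}) by $w$ and by $y\cdot\nabla w$ respectively and integrating by parts) to rule out zeros. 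For $\alpha_0$ with $\Im(\alpha_0)\neq 0$ and $\Re(\alpha_0)\geq 0$, taking the imaginary part of $h(\alpha_0)=0$ produces $\Im(\alpha_0)$ times a strictly positive expression, giving a contradiction. The uniform lower bound $\Re(\alpha_0)\leq -c_1$ then follows from a compactness argument on the strip $-c_1\leq\Re(\alpha_0)\leq 0$, using the continuity of $h$ and the fact that its only zero at the boundary could be $\alpha_0=0$, which is excluded by hypothesis.

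The main obstacle is the sign analysis for real positive $\alpha_0$, where one must track $h$ carefully across the pole at $\alpha_0=\lambda_1$ and make quantitative use of the specific two-dimensional structure of $w$ via the Pohozaev identities; the critical coefficient $\gamma=2$ in (\ref{2.5}) is precisely the borderline case, and it is only this value that makes the argument work at all. All details, including the effective lower bound $c_1>0$, are carried out in the proof of \cite[Theorem 1.4]{w2}, whose hypotheses match (\ref{2.5}) exactly, so in practice I would set up the reduction described above and then invoke that theorem.
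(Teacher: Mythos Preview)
Your proposal is correct and in fact goes well beyond what the paper does: the paper provides no proof at all for Theorem~\ref{th2.1}, simply stating that it is ``due to \cite[Theorem 1.4]{w2}'' and moving on. Since you ultimately invoke exactly the same reference, your approach coincides with the paper's; the reduction to the scalar characteristic equation $h(\alpha_0)=0$ via the identity $L_0 w=w^2$, the separate treatment of the degenerate case $\int w\phi=0$, and the use of the Pohozaev relation $\int w^3=\tfrac32\int w^2$ that you sketch are indeed the main steps of the argument in \cite{w2}, so your outline is a faithful summary of what lies behind the citation.
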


We shall also consider the following system of nonlocal eigenvalue problems:
\begin{equation}
\label{2.6}
L\Phi:=\Delta\Phi-\Phi+2w\Phi-2\frac{\int_{\B}w\Phi}{\int_{\B}w^2}w^2,
\end{equation}
where
\begin{align*}
\Phi:=\left(\phi_1,\phi_2,\cdots,\phi_k\right)^T\in(H^2(\B)^k).
\end{align*}
Set
\begin{equation}
\label{2.7}
L_0\phi:=\Delta\phi-\phi+2w\phi,
\end{equation}
where $\phi\in H^2(\B)$. Then the conjugate operator of $L$ under the scalar product in $L^2(\B)$ is given by
\begin{equation}
\label{2.8}
L^*\Phi=\Delta\Phi-\Phi+2w\Phi-2\frac{\int_{\B}w^2\Phi}{\int_{\B}w^2}w.
\end{equation}
We have the following result
\begin{lemma}
\label{le2.2}
We have
\begin{equation*}
Ker(L)=K_0\oplus K_0\oplus\cdots\oplus K_0,
\end{equation*}
and
\begin{equation*}
Ker(L^*)=K_0\oplus K_0\oplus\cdots\oplus K_0.
\end{equation*}
\end{lemma}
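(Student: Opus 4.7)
The key observation is that the operator $L$ decouples across the components of $\Phi$: reading $\int_{\B} w\Phi$ as the vector with entries $\int_{\B} w\phi_i$, the $i$-th component of $L\Phi$ depends only on $\phi_i$, and similarly for $L^*$. Hence I will reduce to the scalar nonlocal problems $Ker(L_s)$ and $Ker(L_s^*)$, where
$$L_s\phi := L_0\phi - 2\frac{\int_{\B} w\phi}{\int_{\B} w^2}w^2, \qquad L_s^*\phi := L_0\phi - 2\frac{\int_{\B} w^2\phi}{\int_{\B} w^2}w,$$
and it will suffice to show $Ker(L_s) = Ker(L_s^*) = K_0$.

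The inclusion $K_0 \subset Ker(L_s) \cap Ker(L_s^*)$ is immediate from Lemma \ref{le2.1}: for $\phi = \partial w/\partial y_j$ one has $L_0\phi = 0$, and both $\int w\,(\partial w/\partial y_j) = \tfrac{1}{2}\int\partial_{y_j}(w^2)$ and $\int w^2\,(\partial w/\partial y_j) = \tfrac{1}{3}\int\partial_{y_j}(w^3)$ vanish by the decay of $w$.

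For the reverse inclusion $Ker(L_s) \subset K_0$, I will use the identity $L_0 w = w^2$, a direct consequence of the equation for $w$. If $L_s\phi = 0$, then $L_0\phi = cw^2$ with $c = 2\int w\phi/\int w^2$, so $\phi - cw \in Ker(L_0) = K_0$, giving $\phi = cw + \phi_0$ with $\phi_0 \in K_0$. Substituting back into the formula for $c$ and using $\int w\phi_0 = 0$ yields the self-contradiction $c = 2c$, forcing $c = 0$ and $\phi \in K_0$.

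The analogous argument for $L_s^*$ requires a preimage of $w$ under $L_0$. I will produce $\psi := w + \tfrac{1}{2}\,y\cdot\nabla w$, obtained by differentiating at $\lambda = 1$ the scaling family $u_\lambda(y) := w(y/\lambda)$ of solutions of $\lambda^2\Delta u - u + u^2 = 0$, for which a direct computation gives $L_0\psi = w$. Then $L_s^*\phi = 0$ forces $\phi = c'\psi + \phi_0$ with $\phi_0 \in K_0$, and the consistency relation becomes $c' = 2c'\int w^2\psi/\int w^2$. Integration by parts yields $\int w^2\psi = \tfrac{2}{3}\int w^3$, while the Pohozaev identity on $\B$ gives $\int w^3 = \tfrac{3}{2}\int w^2$; combining these, $\int w^2\psi = \int w^2$, so $c' = 0$. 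The only subtlety is this coincidence of Pohozaev coefficients in dimension two, which is precisely what turns the consistency relation into an obstruction rather than a vacuous identity; every other step is a direct application of Lemma \ref{le2.1}.
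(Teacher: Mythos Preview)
Your argument is correct. For $Ker(L)$ you do exactly what the paper does: write $L_0\phi = cw^2$, use $L_0 w = w^2$ to get $\phi - cw \in K_0$, and close up with the self-consistency $c = 2c$.

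For $Ker(L^*)$ your route is genuinely different and considerably more involved than the paper's. You construct the explicit preimage $\psi = w + \tfrac{1}{2}y\cdot\nabla w$ of $w$ under $L_0$, then invoke the Pohozaev identity $\int w^3 = \tfrac{3}{2}\int w^2$ (special to dimension two) to obtain $\int w^2\psi = \int w^2$ and hence the obstruction $c' = 2c'$. The paper instead exploits the self-adjointness of $L_0$ directly: testing $L_s^*\phi = 0$ against $w$ gives
\[
\int_{\B} (L_0\phi)\,w - 2\int_{\B} w^2\phi = \int_{\B} \phi\,(L_0 w) - 2\int_{\B} w^2\phi = -\int_{\B} w^2\phi = 0,
\]
so $\int w^2\phi = 0$ and the nonlocal term drops, reducing to $L_0\phi = 0$. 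This one-line duality argument avoids both the construction of $\psi$ and the Pohozaev computation, and is dimension-independent. Your approach, while heavier, has the merit of being constructive: it exhibits an explicit basis for the range of $L_0$ restricted to the relevant subspace, which can be useful in related perturbation problems.
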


\begin{proof}
The system (\ref{2.6}) is in diagonal form. Suppose
\begin{equation*}
L\Phi=0.
\end{equation*}
For $i=1,2,\cdots,k$ the $i$-th equation of (\ref{2.6}) is given by
\begin{equation}
\label{2.9}
\Delta\phi_i-\phi_i+2w\phi_i-2\frac{\int_{\B}w\phi_i}{\int_{\B}w^2}w^2=0.
\end{equation}
We claim that (\ref{2.9}) admits only the solution $\frac{\partial w}{\partial x_i},i=1,2$. Indeed, we note that
$\phi_i'=\frac{\int_{\B}w\phi_i}{\int_{\B}w^2}w$ satisfies that
\begin{equation*}
\Delta\phi_i'-\phi_i'+2w\phi_i'=\frac{\int_{\B}w\phi_i}{\int_{\B}w^2}w^2.
\end{equation*}
As a result, $\phi_i-{2}\phi_i'$ satisfies
\begin{equation*}
\Delta(\phi_i-2\phi_i')-(\phi_i-2\phi_i')+2w(\phi_i-2\phi_i')=0,
\end{equation*}
and we get
\begin{equation}
\label{2.10}
\phi_i-2\phi_i'=c_1\frac{\partial w}{\partial x_1}+c_2\frac{\partial w}{\partial x_2}
\end{equation}
by Lemma \ref{le2.1}. Multiplying by $w$ on both sides of (\ref{2.10}) and integrating, we have $\int_{\B}w\phi_i=0$. Hence, $\phi_i$ is the solution to
\begin{equation*}
\Delta\phi-\phi+2w\phi=0,
\end{equation*}
and we get the first conclusion of Lemma \ref{le2.2}. To prove the second statement, we proceed in a similar way for $L^*$, and the $i$-th equation of (\ref{2.8}) is given as
\begin{equation}
\label{2.11}
\Delta\phi_i-\phi_i+2w\phi_i-2\frac{\int_{\B}w^2\phi_i}{\int_{\B}w^2}w=0.
\end{equation}
Multiplying (\ref{2.11}) by $w$ and integrating, we obtain $\int_{\B}w^2\phi_i=0.$ Then we have
\begin{equation*}
\Delta\phi_i-\phi_i+2w\phi_i=0.
\end{equation*}
By Lemma \ref{le2.1} again, we get the second conclusion and the proof is finished.
\end{proof}

By the result of Lemma \ref{le2.2}, we have
\begin{lemma}
\label{le2.3}
The operator
\begin{align*}
L:(H^2(\B))^k\rightarrow{(L^2(\B))^k}, L\Phi=\Delta\Phi-\Phi+2w\Phi-2\frac{\int_{\B}w\Phi}{\int_{\B}w^2}w^2,
\end{align*}
is invertible if it is restricted as follows:
\begin{align*}
L:\left(K_0\oplus\cdots\oplus K_0\right)^{\perp}\cap(H^2(\B))^k\rightarrow\left(K_0\oplus\cdots\oplus K_0\right)^{\perp}\cap(L^2(\B))^k.
\end{align*}
Moreover, $L^{-1}$ is bounded.
\end{lemma}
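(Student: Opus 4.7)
The plan is to reduce the problem to the Fredholm alternative. Since the operator $L$ acts diagonally, the key case is $k=1$, where $L\phi=\Delta\phi-\phi+2w\phi-2\frac{\int_{\mathbb{R}^2}w\phi}{\int_{\mathbb{R}^2}w^2}w^2$; the general statement then follows componentwise.

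First I would verify that $L:H^2(\mathbb{R}^2)\to L^2(\mathbb{R}^2)$ is a bounded Fredholm operator of index zero. The model operator $\Delta-I$ is an isometric isomorphism from $H^2(\mathbb{R}^2)$ onto $L^2(\mathbb{R}^2)$. The multiplication operator $\phi\mapsto 2w\phi$ is a compact perturbation because $w\in L^\infty(\mathbb{R}^2)$ decays exponentially at infinity, so by a standard cut-off plus Rellich-Kondrachov argument (truncating $w$ and passing to the limit in operator norm) it maps $H^2$ compactly into $L^2$. The nonlocal term $\phi\mapsto -2\frac{\int w\phi}{\int w^2}w^2$ is of rank one, hence trivially compact. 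Therefore $L$ is a compact perturbation of an isomorphism, and the Riesz-Schauder theory gives that $L$ is Fredholm of index zero.

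Next, I would apply the Fredholm alternative. By Lemma \ref{le2.2}, the kernel of $L$ and the kernel of the $L^2$-adjoint $L^*$ are both equal to $K_0\oplus\cdots\oplus K_0$. Because $L$ has index zero, the range of $L$ is closed and coincides with $(\mathrm{Ker}\,L^*)^\perp=(K_0\oplus\cdots\oplus K_0)^\perp$. Consequently, the restriction
\begin{equation*}
L:\left(K_0\oplus\cdots\oplus K_0\right)^\perp\cap (H^2(\mathbb{R}^2))^k\to\left(K_0\oplus\cdots\oplus K_0\right)^\perp\cap (L^2(\mathbb{R}^2))^k
\end{equation*}
is a continuous bijection between Banach spaces, and the bounded inverse theorem yields that $L^{-1}$ is bounded.

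The only mildly delicate point is the compactness of multiplication by $w$ on the whole plane; this is standard but requires the exponential decay of $w$, so I would state it explicitly rather than invoking a compact embedding on a bounded domain. Everything else is a routine application of the Fredholm alternative once Lemma \ref{le2.2} is in hand, so no essential new difficulty arises.
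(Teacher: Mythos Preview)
Your proof is correct and follows essentially the same approach as the paper, which simply states that the result follows from the Fredholm alternative together with Lemma~\ref{le2.2}. You have filled in the details that the paper leaves implicit---in particular, the verification that $L$ is a compact perturbation of the isomorphism $\Delta-I$ (using the exponential decay of $w$ for the multiplication term and the finite rank of the nonlocal term), so that the Fredholm alternative indeed applies.
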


\begin{proof}
This results follows from the Fredholm Alternative and Lemma \ref{le2.2}.
\end{proof}

Next, we study the eigenvalue problem for $L:$
\begin{align}
\label{2.12}
L\Phi=\alpha\Phi.
\end{align}
We have

\begin{lemma}
\label{le2.4}
For any nonzero eigenvalue $\alpha$ of (\ref{2.12}) we have $\Re(\alpha)\leq-c<0$.
\end{lemma}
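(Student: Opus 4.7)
The plan is to reduce the vector-valued eigenvalue problem \eqref{2.12} to the scalar nonlocal eigenvalue problem \eqref{2.5} and then invoke Theorem \ref{th2.1}. The key observation is that the operator $L$ defined in \eqref{2.6} is in diagonal form: each component of $L\Phi$ depends only on the corresponding component $\phi_i$ of $\Phi$. Hence the eigenvalue system decouples completely.

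More precisely, I would argue as follows. Suppose $\alpha\neq 0$ is an eigenvalue of $L$ with eigenfunction $\Phi=(\phi_1,\ldots,\phi_k)^T\not\equiv 0$. Then for each $i=1,\ldots,k$ the component $\phi_i$ satisfies
\begin{equation*}
\Delta\phi_i-\phi_i+2w\phi_i-2\frac{\int_{\B}w\phi_i}{\int_{\B}w^2}w^2=\alpha\phi_i.
\end{equation*}
Since $\Phi\not\equiv 0$, at least one component, say $\phi_{i_0}$, is not identically zero. That $\phi_{i_0}$ is then a nontrivial eigenfunction of the scalar nonlocal operator appearing in \eqref{2.5} with eigenvalue $\alpha\neq 0$. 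Applying Theorem \ref{th2.1} yields $\Re(\alpha)\leq -c_1$ for the universal constant $c_1>0$ given there, and the lemma follows with $c=c_1$.

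There is essentially no obstacle here, since all the analytical work has already been done in establishing Theorem \ref{th2.1}; the content of the lemma is merely to upgrade that scalar statement to the vector setting. The only thing that needs a moment of care is to check that $\alpha\neq 0$ really does force the reduction above to produce a nontrivial scalar eigenfunction (as opposed to every nonzero component lying in the kernel), but this is immediate since the scalar equation for each $\phi_i$ is exactly the eigenvalue equation in \eqref{2.5}, and by assumption not every $\phi_i$ vanishes.
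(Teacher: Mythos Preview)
Your proof is correct and follows essentially the same approach as the paper: decouple the diagonal system into scalar equations, pick a nontrivial component, and apply Theorem~\ref{th2.1}. The paper phrases it as a contradiction (assuming $\Re(\alpha)\geq 0$) but this is cosmetic; your direct argument is if anything slightly cleaner.
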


\begin{proof}
Let $(\Phi,\alpha)$ satisfy the system (\ref{2.12}). Suppose $\Re(\alpha)\geq0$ and $\alpha\neq0$. The $i$-th equation of (\ref{2.12}) becomes
\begin{equation*}
\Delta\phi_i-\phi_i+2w\phi_i-2\frac{\int_{\B}w\phi_i}{\int_{\B}w^2}\phi_i=\alpha\phi_i.
\end{equation*}
By Theorem \ref{th2.1}, we conclude that
$$\Re(\alpha)\leq-c<0.$$
\end{proof}

To conclude this section, we give a short summary on the Green function of $\Delta-1$ in $B_R.$ Let $G(x,y)$ be the Green function given by
\begin{equation*}
\begin{cases}
\Delta_x G(x,z)-G(x,z)+\delta_z(x)=0~&\mathrm{in}~B_R,\\
\frac{\partial G(x,z)}{\partial\nu}=0~&\mathrm{on}~\partial B_R.
\end{cases}
\end{equation*}
Then the following holds:
{\begin{enumerate}
  \item [(1)] If $0<|x-z|\ll1$, we have
  \begin{equation}
  \label{2.13}
  G(x,z)=\frac{1}{2\pi}\log\frac{1}{|x-z|}+H(x,z),
  \end{equation}
  where $H(x,z)$ is a continuous function and $\partial_x H(x,z)\mid_{x=z}=0.$
  \item [(2)] If $1\ll|x-z|\ll R$, we have
  \begin{equation}
  \label{2.14}
  G(x,z)=C|x-z|^{-\frac12}e^{-|x-z|}\left(1+o(1)\right),~G'(x,z)=-G(x,z)(1+o(1))
  \end{equation}
  for some generic constant $C$.
\end{enumerate}}
\vspace{1cm}

\section{Existence I: Reduction to Finite Dimensions}
In this section, we shall reduce the existence problem to a finite dimensional problem. In the first step, we choose a good approximation to an equilibrium state. Then we shall use the Liapunov-Schmidt reduction to reduce the original problem to a finite dimensional one.
Some technical proofs are done in appendix A. In the next section, we solve the reduced problem.

First of all, let us set the candidate points for the location of spikes to the activator. Let $Q_{\e}$ denote the set of the vertex points of the regular $k$-polygon.
\begin{align}
\label{3.1}
Q_{\e}=\left\{{\bf q}=(q_1,\cdots,q_k)\mid q_i=\left(2R_{\e}\cos\frac{2(i-1)\pi}{k},2R_{\e}\sin\frac{2(i-1)\pi}{k}\right)\right\},
\end{align}
where $R_{\e}$ is chosen such that
\begin{align}
\label{3.2}
\frac{1}{C}\frac{\sqrt{D}}{\e}\log\left(\frac{1}{D\log\frac{\sqrt{D}}{\e}}\right)\leq R_{\e}\leq C\frac{\sqrt{D}}{\e}\log\left(\frac{1}{D\log\frac{\sqrt{D}}{\e}}\right)
\end{align}
for some constant $C$ independent of $\e$ and $D$. If $\max\left(\frac{\e}{\sqrt{D}},D\log\frac{\sqrt{D}}{\e}\right)\rightarrow0$, then we can see that $\e R_{\e}\rightarrow0$ and $\frac{\sqrt{D}}{\e R_{\e}}\rightarrow0.$
\medskip

Recall that we want to solve (\ref{xi}) which is given by
\begin{align*}
{\begin{cases}
\Delta A-\mu(\e x)A+\frac{A^2}{H}=0,\quad&\mathrm{in}~B_{R/\varepsilon},\\[2mm]
\Delta H-\sigma^2H+\xi A^2=0,\quad&\mathrm{in}~B_{R/\varepsilon},\\[2mm]
\frac{\partial A}{\partial \nu}=\frac{\partial H}{\partial \nu}=0, \quad&\mbox{on}~\partial B_{R/\e}.
\end{cases}}
\end{align*}
where $\sigma^2=\frac{\e^2}{D}$.

Next we introduce the cut off function $\chi_{\e,q_j}(x)=\chi(\frac{x-q_j}{R_{\e}\sin\frac{\pi}{k}})$, where
\begin{equation}
\label{5.test}
\chi(x)=\begin{cases}1,~|x|\leq\frac12,\\[2mm]0,~|x|>1,\end{cases}~\chi\in C_0^{\infty}(\B).
\end{equation}

 Let $(q_1,\cdots,q_k)$ be defined as in (\ref{3.1}) and we set
\begin{align*}
W=\sum_{j=1}^k\chi_{\varepsilon,q_j}(x)w(x-q_j).
\end{align*}

We write
\begin{align}
\label{3.4}
\xi^{-1}=\int_{B_{R/\varepsilon}}G(\sigma q_1,\sigma z)\left(\sum_{j=1}^k\chi(\varepsilon z)w(z-q_j)\right)^2\mathrm{d}z=
\frac{1}{2\pi}\log\frac{1}{\s}({\int_{\mathbb{R}^2}w^2}+o(1)).
\end{align}
\medskip

For a function $u\in H^2(B_{R/\varepsilon})$, let $T[u]$ be the unique solution
 in $H^2_N(B_{R/\varepsilon})$
 of the following problem:
\begin{align}
\label{3.5}
\Delta T[u]-\sigma^2 T[u]+\xi u^2=0~\mathrm{in}~B_{R/\varepsilon},
\end{align}
where
\[
H^2_N(B_{R/\varepsilon})=\{u\in H^2(\BB)\,|\, \frac{\partial u}{\partial \nu}=0 \ \mbox { on }
\partial \BB\}.
\]
Written differently, we have
\begin{align}
\label{3.6}
T[u](x)=\xi\int_{B_{R/\varepsilon}}G_{\sigma}(x,z)u^2(z)\mathrm{d}y,
\end{align}
where $G_{\sigma}(x,z)$ is the Green function {which satisfies $(\Delta-\sigma^2)G_{\sigma}(x,z)+\delta_z(x)=0$ in $\BB$} with Neumann boundary condition.

\medskip

System (\ref{xi}) is equivalent to the following equation in operator form:
\begin{align}
\label{3.7}
S_{\e}(u,v)=
\left(\begin{matrix}
S_1(A,H)\\S_2(A,H)\end{matrix}\right)=0,~
H^2_N(B_{R/\varepsilon})\times H^2_N(B_{R/\varepsilon})\rightarrow L^2(B_{R/\varepsilon})\times L^2(B_{R/\varepsilon}),
\end{align}
where
\begin{align*}
S_1(A,H)=\Delta A-\mu(\e x)A+\frac{A^2}{H}:~H^2_N(B_{R/\varepsilon})\times H^2_N(B_{R/\varepsilon})\rightarrow L^2(B_{R/\varepsilon}),\\
S_2(A,H)=\Delta H-\s^2H+\xi A^2:~H^2_N(B_{R/\varepsilon})\times H^2_N(B_{R/\varepsilon})\rightarrow L^2(B_{R/\varepsilon}).
\end{align*}
For equation (\ref{xi}), we choose our approximate solution as follows,
\begin{align}
\label{3.8}
A_{\e,\q}=W,~H_{\e,\q}=T[W].
\end{align}
Note that $H_{\e,\q}$ satisfies
\begin{align*}
0=\Delta H_{\e,\q}-\s^2H_{\e,\q}+\xi A_{\e,\q}^2=\Delta H_{\e,\q}-\s^2H_{\e,\q}+\xi\sum_{j=1}^kw(x-q_j)^2+h.o.t..
\end{align*}
Further, by our choice of $\xi$ in \eqref{3.4}, it is easy to see that $H_{\e,\q}(q_i)=1,~i=1,\cdots,k.$
We insert our ansatz (\ref{3.8}) into (\ref{xi}) and calculate
\begin{equation}
\label{3.9}
S_2(A_{\e,\q},H_{\e,\q})=0,
\end{equation}
and
\begin{align}
\label{3.10}
S_1(A_{\e,\q},H_{\e,\q})=&~\Delta A_{\e,\q}-\mu(\e x)A_{\e,\q}+\frac{A_{\e,\q}^2}{H_{\e,\q}}\nonumber\\
=&\sum_{j=1}^k\Big[\Delta w(x-q_j)-\mu(\e x)w(x-q_j)\Big]+\sum_{j=1}^kw^2(x-q_j)H_{\e,\q}^{-1}+h.o.t.\nonumber\\
=&\sum_{j=1}^k\big(1-\mu(\e x)\big)w(x-q_j)+\sum_{j=1}^kw^2(x-q_j)\big(H_{\e,\q}^{-1}-1\big)+h.o.t.
\end{align}
On the other hand, we calculate for $j=1,\cdots,k$ and $x=q_j+ z$ with $|\s z|<\delta:$
\begin{align}
\label{3.11}
H_{\e,\q}(q_j+ z)-1=~&\xi\int_{B_{R/\varepsilon}}\Big(G_{\s}(q_j+z,t)-G_{\s}(q_j,t)\Big)A^2_{\e,\q}\mathrm{d}t\nonumber\\
=~&\xi\int_{B_{R/\varepsilon}}\Big(G_{\s}(q_j+z,t)-G_{\s}(q_j,t)\Big)w(t-q_j)^2\mathrm{d}t\nonumber\\
&+\xi\int_{B_{R/\varepsilon}}\Big(G_{\s}(q_j+z,t)-G_{\s}(q_j,t)\Big)\sum_{l\neq j}w(t-q_l)^2\mathrm{d}t+O\left(e^{-2R_{\e}\sin(\frac{\pi}{k})}\right)\nonumber\\
=~&\xi\int_{\B}\frac{1}{2\pi}\log\frac{|t|}{|z-t|}w^2(t)\mathrm{d}t+\xi\left(\sum_{l=1}^2\frac{\partial F(\q)}{\partial q_{j,l}}\sigma z_l\int_{\B}w^2(t)\mathrm{d}t\right)\nonumber\\
&+\xi\sum_{l,m}\frac{\partial^2F(\q)}{\partial q_{j,l}\partial q_{j,m}}\sigma^2z_lz_m\int_{\B}w^2+O\left(e^{-2R_{\e}\sin(\frac{\pi}{k})}\right)\nonumber\\
&+O\left(\sigma^3|z|^2+\s^2R_{\s}^{-\frac12}e^{-R_{\s}}|z|\right),
\end{align}
where $R_{\s}=2\sigma R_{\e}\sin\left(\frac{\pi}{k}\right)$ and
\begin{align}
\label{fq}
F(\q)=\sum_{i=1}^kH(\s q_i,\s q_i)+\sum_{i\neq j}^kG(\s q_i,\s q_j).
\end{align}

Substituting (\ref{3.11}) into (\ref{3.10}), we have the following key estimate,
\begin{lemma}
\label{le3.1}
For $x=q_j+z,~|\s z|<\delta$, we have
\begin{align}
\label{3.12}
S_1(A_{\e,\q},H_{\e,\q})=S_{1,1}+S_{1,2},
\end{align}
where
\begin{align}
\label{3.13}
S_{1,1}(z)=~&\xi H_{\e,\q}(q_j)^{-2}\left(\int_{\B}w^2\right)w^2(z)\Big(\sigma z\nabla_{q}F(\q)
+\sigma^2z_lz_m\frac{\partial^2F(\bf q)}{\partial q_{j,l}\partial q_{j,m}}+h.o.t.\Big)\nonumber\\
&+\e\sum_{l=1}^2z_l\left(\mu''(0)+\frac12\mu'''(0)\e|q_j|+O(\e^2|q_j|^2)\right)\e q_{j,l}\sum_{i=1}^kw(x-q_i)\nonumber\\
&+O\left(\s^3|z|^2+\s^2R_{\s}^{-\frac12}e^{-R_{\s}}|z|+e^{-2R_{\e}\sin\left(\frac{\pi}{k}\right)}\right),
\end{align}
and
\begin{align}
\label{3.14}
S_{1,2}(z)=\xi w^2(z)R(|z|)+\e^2R_{\e}^2w(z),
\end{align}
where $R(|z|)$ is a radially symmetric function with the property that $$R(|z|)=O(\log(1+|z|)).$$
Further, $S_1(A_{\e,\q},H_{\e,\q})=e^{-\frac{\delta}{\s}}$ for $|x-q_j|\geq\frac{\delta}{\sigma},~j=1,2,\cdots,k.$
\end{lemma}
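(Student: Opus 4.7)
The plan is to start from the decomposition already obtained in (\ref{3.10}), which writes $S_1(A_{\e,\q},H_{\e,\q})$ near $q_j$ as the sum of a precursor contribution $(1-\mu(\e x))w(x-q_j)$ and an inhibitor contribution $w^2(x-q_j)(H_{\e,\q}^{-1}(x)-1)$, plus cross-spike terms which decay like $e^{-2R_\e\sin(\pi/k)}$. For $x=q_j+z$ with $|\s z|<\delta$ I will treat each piece separately and sort the resulting contributions according to whether they are radially symmetric in $z$ (which I assign to $S_{1,2}$) or genuinely non-radial in $z$ (which I assign to $S_{1,1}$).

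For the precursor piece I Taylor expand $\mu(\e q_j+\e z)$ around $\e q_j$. Using radial symmetry together with $\mu(0)=1$ and $\mu'(0)=0$, one has $\mu(y)=1+\tfrac12\mu''(0)|y|^2+\tfrac16\mu'''(0)|y|^3+O(|y|^4)$, so $\nabla\mu(\e q_j)=\bigl(\mu''(0)+\tfrac12\mu'''(0)\e|q_j|+O(\e^2|q_j|^2)\bigr)\e q_j$ and $|1-\mu(\e q_j)|=O(\e^2R_\e^2)$. The constant-in-$z$ piece multiplies $w(x-q_j)$ and contributes exactly the $\e^2R_\e^2\,w(z)$ remainder in $S_{1,2}$; the linear-in-$z$ piece $\nabla\mu(\e q_j)\cdot\e z$ times $w(z)$ reproduces the second line of (\ref{3.13}); the quadratic-and-higher-in-$z$ pieces are of order $\e^2|z|^2w(z)$ and are absorbed into the error.

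For the inhibitor piece I exploit $H_{\e,\q}(q_j)=1$, which holds by the definition of $\xi$ in (\ref{3.4}), and expand $H_{\e,\q}^{-1}(x)-1=-H_{\e,\q}(q_j)^{-2}\bigl(H_{\e,\q}(x)-H_{\e,\q}(q_j)\bigr)+O\bigl(|H_{\e,\q}(x)-H_{\e,\q}(q_j)|^2\bigr)$. Substituting the expansion (\ref{3.11}) of $H_{\e,\q}(q_j+z)-1$, the self-interaction term $\xi\int_{\B}\tfrac{1}{2\pi}\log(|t|/|z-t|)w^2(t)\,dt$ depends on $z$ only through $|z-t|$ and, because $w^2$ is radial, is itself a radial function of $z$; I therefore define $R(|z|)$ by this expression and place $\xi w^2(z)R(|z|)$ into $S_{1,2}$. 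The bound $R(|z|)=O(\log(1+|z|))$ follows by splitting the integral at $|t|=2|z|$ and using the exponential decay of $w$. The remaining gradient term $\xi\s z\cdot\nabla_\q F\int w^2$ and Hessian term $\xi\s^2 z_\ell z_m\partial^2_{q_{j,\ell}q_{j,m}}F\int w^2$ in (\ref{3.11}) are genuinely non-radial in $z$ and, once multiplied by $w^2(z)H_{\e,\q}(q_j)^{-2}$ with the sign absorbed into $F$, yield the first line of (\ref{3.13}); the error terms in (\ref{3.11}) and the exponentially small cross-spike contributions produce the residuals collected at the tail of $S_{1,1}$.

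The main obstacle is to maintain a clean separation between the radial and non-radial contributions of $H_{\e,\q}(q_j+z)-1$ and to verify that the logarithmic self-interaction is genuinely radial in $z$ with no hidden anisotropic piece hiding in the regular part $H(x,z)$ of the Green function. In the exterior regime $|x-q_j|\ge\delta/\s$ for every $j$, the estimate $S_1=O(e^{-\delta/\s})$ follows at once from the exponential decay of $w$ and $w^2$ together with the uniform boundedness of $H_{\e,\q}^{-1}$ there.
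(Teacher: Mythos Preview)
Your proposal is correct and follows exactly the approach the paper intends: the paper's ``proof'' consists of the single sentence ``Substituting (\ref{3.11}) into (\ref{3.10}), we have the following key estimate,'' and you have filled in precisely those substitution details---Taylor-expanding the precursor term about $\e q_j$ to separate the constant $O(\e^2R_\e^2)$ piece from the linear-in-$z$ gradient piece, and reading off from (\ref{3.11}) the radial logarithmic self-interaction versus the non-radial $\nabla_q F$ and Hessian contributions. Your justification that the self-interaction integral is radial in $z$ (rotation invariance of the convolution of $\log|\cdot|$ with the radial $w^2$) and your sketch of the $O(\log(1+|z|))$ bound are the only points not made explicit in the paper, and both are sound.
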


The above estimates will be very important in the following calculations, where (\ref{3.7}) is solved exactly.

Now we study the linearised operator defined by
\begin{align*}
\tilde{L}_{\e,\q}:=S_{\e,\q}'\left(\begin{matrix}A_{\e,\q}\\H_{\e,\q}\end{matrix}\right),
\end{align*}
\begin{align*}
\tilde{L}_{\e,\q}:H^2_N(B_{R/\varepsilon})\times H^2_N(B_{R/\varepsilon})\rightarrow L^2(B_{R/\varepsilon})\times L^2(B_{R/\varepsilon}).
\end{align*}

Set
\begin{align*}
K_{\e,\q}:=\mathrm{span}\left\{\frac{\partial A_{\e,\q}}{\partial q_{j,l}}\mid j=1,2,\cdots,k,l=1,2\right\}\subset H^2_N(B_{R/\varepsilon}),
\end{align*}
and
\begin{align*}
C_{\e,\q}:=\mathrm{span}\left\{\frac{\partial A_{\e,\q}}{\partial q_{j,l}}\mid j=1,2,\cdots,k,l=1,2\right\}\subset L^2(B_{R/\varepsilon}).
\end{align*}

The operator $\tilde{L}_{\e,\q}$ is not uniformly invertible in $\e$ and $\sigma$ due to approximate kernel,
\begin{align}
\label{3.15}
\mathcal{K}_{\e,\q}:=K_{\e,\q}\oplus\{0\}\subset H^2_N(B_{R/\varepsilon})\times H^2_N(B_{R/\varepsilon}).
\end{align}
We choose the approximate cokernel as follows:
\begin{align}
\label{3.16}
\mathcal{C}_{\e,\q}:=C_{\e,\q}\oplus\{0\}\subset L^2(B_{R/\varepsilon})\times L^2(B_{R/\varepsilon}).
\end{align}
{Then we} define
\begin{align}
\label{3.17}
\mathcal{K}_{\e,\q}^{\perp}:=K_{\e,\q}^{\perp}\oplus H^2_N(B_{R/\varepsilon})\subset H^2_N(B_{R/\varepsilon})\times H^2_N(B_{R/\varepsilon}),
\end{align}
\begin{align}
\label{3.18}
\mathcal{C}_{\e,\q}^{\perp}:=C_{\e,\q}^{\perp}\oplus L^2(B_{R/\varepsilon})\subset L^2(B_{R/\varepsilon})\times L^2(B_{R/\varepsilon}),
\end{align}
where $\mathcal{C}_{\e,\q}^{\perp}$ and $\mathcal{K}_{\e,\q}^{\perp}$ denote the orthogonal complement with the scalar product of $L^2(B_{R/\varepsilon})$ in the spaces $H^2_N(B_{R/\varepsilon})$ and $L^2(B_{R/\varepsilon})$, respectively.

Let $\pi_{\e,\q}$ denote the projection in $L^2(B_{R/\varepsilon})\times L^2(B_{R/\varepsilon})$ onto $\mathcal{C}_{\e,\q}^{\bot}$, where the second component of the projection is the identity map. We are going to show that the equation
\begin{align*}
\pi_{\e,\q}\circ S_{\e,\q}\left(\begin{matrix}
A_{\e,\q}+\phi_{\e,\q}\\
H_{\e,\q}+\psi_{\e,\q}
\end{matrix}\right)=0
\end{align*}
has a unique solution $\Sigma_{\e,\q}=\left(\begin{matrix}
\phi_{\e,\q}\\
\psi_{\e,\q}
\end{matrix}\right)\in\mathcal{K}_{\e,\q}^{\perp}$ if $\max\left(\frac{\e}{\sqrt{D}},D\log\frac{\sqrt{D}}{\e}\right)$ is small enough.

Set
\begin{align}
\label{3.19}
\mathcal{L}_{\e,\q}=\pi_{\e,\q}\circ\tilde{L}_{\e,\q}:\mathcal{K}_{\e,\q}^{\perp}\rightarrow\mathcal{C}_{\e,\q}^{\perp}.
\end{align}

In appendix A we will show that this linear operator is uniformly invertible. Then we will use this to prove the existence of $\Sigma_{\e,\q}=\left(\begin{matrix}
\phi_{\e,\q}\\
\psi_{\e,\q}
\end{matrix}
\right).$

\vspace{1cm}
\section{Existence II: The reduced problem}
In this section, we solve the reduced problem and prove Theorem \ref{th1.1}.

By Lemma \ref{le3.2}, for each $\q\in Q_{\e}$, there exists a unique solution $(\Phi_{\e,\q},\Psi_{\e,\q})\in\mathcal{K}_{\e,\q}^{\perp}$ such that
\begin{equation*}
S_{\e,\q}\left(\begin{matrix}A_{\e,\q}+\Phi_{\e,\q}\\H_{\e,\q}+\Psi_{\e,\p}\end{matrix}\right)
=\left(\begin{matrix}\Xi_{\e,\q}\\0\end{matrix}\right)\in\mathcal{C}_{\e,\q}.
\end{equation*}
Our idea is to find $\q$ such that $S_{\e,\q}\left(\begin{matrix}A_{\e,\q}+\Phi_{\e,\q}\\H_{\e,\q}+\Psi_{\e,\q}\end{matrix}\right)\perp \mathcal{C}_{\e,\q}.$
Let
\begin{align*}
W_{\e,j,i}(\q):=\frac{1}{\xi}\int_{B_{R/\varepsilon}}\left(S_{1}\big(A_{\e,\q}+\Phi_{\e,\q},H_{\e,\q}+\Psi_{\e,\q}\big)\frac{\partial A_{\e,\q}}{\partial q_{j,i}}\right),
\end{align*}
where $j=1,2,\cdots,k$ and $i=1,2.$ We set
\begin{align*}
W_{\e}(\q)=\left(W_{\e,1,1}(\q),\cdots,W_{\e,k,2}(\q)\right).
\end{align*}
It is easy to see that $W_{\e}(\q)$ is a map which is continuous in $\q,$ and our problem is reduced to finding a zero of the vector field $W_{\e}(\q)$. Since the points $q_1,q_2,\cdots,q_k$ are the vertices of a regular $k$-polygon and $\mu(x)$ is a radially symmetric function, if we can find $\q\in Q_{\e}$ such that $\left(W_{\e,1,1}(\q),W_{\e,1,2}(\q)\right)=0$, then $W_{\e}(\q)=0$. Further, we note that the approximate solution $(A_{\e,\q},H_{\e,\q})$ is invariant under rotation by $\frac{2\pi}{k}.$ Thus, using \cite[Corollary 7.1]{dkw}, $W_{\e,1,2}$ equals $0$. So, all that remains is finding $\q$ such that $W_{\e,1,1}(\q)=0.$
\medskip

We calculate the asymptotic expansion of $W_{\e,1,1}(\q)$,
\begin{align*}
&\int_{B_{R/\varepsilon}}S_1(A_{\e,\q}+\Phi_{\e,\q},H_{\e,\q}+\Psi_{\e,\q})\frac{\partial A_{\e,\q}}{\partial q_{1,1}}\nonumber\\
&\quad=\int_{B_{R/\varepsilon}}\left[\Delta(A_{\e,\q}+\Phi_{\e,\q})-\mu(A_{\e,\q}+\Phi_{\e,\q})
+\frac{(A_{\e,\q}+\Phi_{\e,\q})^2}{H_{\e,\q}+\Psi_{\e,\q}}\right]\frac{\partial A_{\e,\q}}{\partial q_{1,1}}\nonumber\\
&\quad=\int_{B_{R/\varepsilon}}\left[\Delta(A_{\e,\q}+\Phi_{\e,\q})-(A_{\e,\q}+\Phi_{\e,\q})
+\frac{(A_{\e,\q}+\Phi_{\e,\q})^2}{H_{\e,\q}}\right]\frac{\partial A_{\e,\q}}{\partial q_{1,1}}\nonumber\\
&\quad\quad+\int_{B_{R/\varepsilon}}\left[\frac{(A_{\e,\q}+\Phi_{\e,\q})^2}{H_{\e,\q}+\Psi_{\e,\q}}-
\frac{(A_{\e,\q}+\Phi_{\e,\q})^2}{H_{\e,\q}}\right]\frac{\partial A_{\e,\q}}{\partial q_{1,1}}\nonumber\\
&\quad\quad+\int_{B_{R/\varepsilon}}\left[(1-\mu)(A_{\e,\q}+\Phi_{\e,\q})\right]\frac{\partial A_{\e,\q}}{\partial q_{1,1}}\nonumber\\
&\quad=I_1+I_2+I_3,
\end{align*}
where $I_i,i=1,2,3$ are defined at the last equality.

For $I_1$, we have by Lemma \ref{le3.2},
\begin{align}
\label{4.1}
I_1=&\int_{B_{R/\varepsilon}}\left[\Delta(A_{\e,\q}+\Phi_{\e,\q})-(A_{\e,\q}+\Phi_{\e,\q})
+\frac{(A_{\e,\q}+\Phi_{\e,\q})^2}{H_{\e,\q}(q_1)}\right]\frac{\partial A_{\e,\q}}{\partial q_{1,1}}\nonumber\\
&-\int_{B_{R/\varepsilon}}\frac{(A_{\e,\q}+\Phi_{\e,\q})^2}{H_{\e,\q}^2(q_1)}(H_{\e,\q}-H_{\e,\q}(q_1))\frac{{\partial A_{\e,\q}}}{\partial q_{1,1}}+O(e^{-2R_{\e}\sin\frac{\pi}{k}})\nonumber\\
=&-\int_{B_{R/\varepsilon}}\left[\Delta(w_1+\Phi_{\e,\q})-(w_1+\Phi_{\e,\q})
+\frac{(w_1+\Phi_{\e,\q})^2}{H_{\e,\q}(q_1)}\right]
\frac{\partial w_1}{\partial x_{1}}\nonumber\\
&+\int_{B_{R/\varepsilon}}\frac{(w_1+\Phi_{\e,\q})^2}{H^2_{\e}(q_1)^2}(H_{\e,\q}(q_1+z)-H_{\e,\q}(q_1))\frac{\partial w_1}{\partial x_1}+O(e^{-2R_{\e}\sin\frac{\pi}{k}}),
\end{align}
where $w_1=w(x-q_1).$ Note that by Lemma \ref{le3.2}, we have $\Phi_{\e,\q,2}$ is radially symmetric with respect to $z$. Then we have
\begin{align}
\label{4.2}
\int_{B_{R/\varepsilon}}\Big[\Delta \Phi_{\e,\q}-\Phi_{\e,\q}+2w_{1}\Phi_{\e,\q}\Big]\frac{\partial w_{1}}{\partial x_1}&=\int_{B_{R/\varepsilon}}\Phi_{\e,\q,1}\frac{\partial}{\partial x_1}[\Delta w-w+w^2]=0,
\end{align}
and
\begin{align}
\label{4.3}
\int_{B_{R/\varepsilon}}(\Phi_{\e,\q})^2\frac{\partial w_1}{\partial x_1}=~&\int_{B_{R/\varepsilon}}\Phi_{\e,\q,1}\Phi_{\e,\q,2}\frac{\partial w_1}{\partial x_1}\nonumber\\
=~&{O\left(\s(\log\frac{1}{\s})^{-2}R_{\s}^{-\frac12}e^{-R_{\s}}+
\s^2(\log\frac{1}{\s})^{-2}+(\log\frac{1}{\s})^{-1}\e^2R_{\e}\right).}
\end{align}

From (\ref{4.1})-(\ref{4.3}), we get
\begin{align}
\label{4.4}
I_1=&\int_{B_{R/\varepsilon}}w_1^2(H_{\e,\q}(q_1+z)-H_{\e,\q}(q_1))\frac{\partial w_1}{\partial x_1}+h.o.t.\nonumber\\
=&\xi\s\sum_{k=1}^2\frac{\partial F(\q)}{\partial q_{1,k}}\int_{\B}w^2z_k\frac{\partial w}{\partial z_1}\int_{\B}w^2+h.o.t.\nonumber\\
=&-c_1\xi\s\frac{\partial F(\q)}{\partial q_{1,1}}+h.o.t.,
\end{align}
where $F(\q)$ is defined in (\ref{fq}),  $c_1=\frac13\int_{\B}w^2\int_{\B}w^3$ and $h.o.t.$ represent terms of the order
$$\s(\log\frac{1}{\s})^{-2}R_{\s}^{-\frac12}e^{-R_{\s}}+\s^2(\log\frac{1}{\s})^{-2}
+(\log\frac{1}{\s})^{-1}\e^2R_{\e}.$$
\medskip

Next, we study the term $I_2.$ We recall that $\Psi_{\e,\q}$ satisfies the following equation
\begin{equation}
\label{4.5}
\Delta\Psi_{\e,\q}-\sigma^2\Psi_{\e,\q}+2\xi A_{\e,\q}\Phi_{\e,\q}+\xi\Phi_{\e,\q}^2=0.
\end{equation}
As for the perturbation term $\Phi_{\e,\q}$, we can also make a decomposition for $\Psi_{\e,\q}=\Psi_{\e,\q,1}+\Psi_{\e,\q,2},$ where
\begin{align*}
\Delta\Psi_{\e,\q,1}-\sigma^2\Psi_{\e,\q,1}+2\xi A_{\e,\q}\Phi_{\e,\q,1}+\xi(\Phi_{\e,\q,1}^2+2\Phi_{\e,\q,1}\Phi_{\e,\q,2})=0,
\end{align*}
and
\begin{align*}
\Delta\Psi_{\e,\q,2}-\sigma^2\Psi_{\e,\q,2}+2\xi A_{\e,\q}\Phi_{\e,\q,2}+\xi\Phi_{\e,\q,2}^2=0.
\end{align*}
Then we can easily see that
$$\|\Psi_{\e,\q,1}\|_{H^2(B_{R/\varepsilon})}=O\left(\s(\log\frac{1}{\s})^{-1}R_{\s}^{-\frac12}e^{-R_{\s}}+
\s^2(\log\frac{1}{\s})^{-1}+\e^2 R_{\e}\right)$$
and $\Psi_{\e,\q,2}$ is radially symmetric with respect to $z$. Further, from the Green representation formula we get that
{\begin{align}
\label{4.6}
\Psi_{\e,\q,1}(q_1+z)-\Psi_{\e,\q}(q_1)
=~&\xi\int_{B_{R/\varepsilon}}(G_{\sigma}(p_1,q_1+z)-G_\s(p_1,z))(2A_{\e,\q}\Phi_{\e,\q}+\Phi_{\e,\q}^2)\,dz
\nonumber\\
=~&o(1)\xi\s|\nabla q_1F(\q)||z|+R_1(|z|),
\end{align}}
where $R_1(|z|)$ is a radially symmetric function.

Substituting (\ref{4.5}) and \eqref{4.6} into $I_2,$ we get
{\begin{align}
\label{4.7}
I_2=&\int_{B_{R/\varepsilon}}\left[\frac{(A_{\e,\q}+\Phi_{\e,\q})^2}{H_{\e,\q}+\Psi_{\e,\q}}
-\frac{(A_{\e}+\Phi_{\e,\q})^2}{H_{\e,\q}}\right]\frac{\partial A_{\e,\q}}{\partial q_{1,1}}\nonumber\\
=&-\int_{B_{R/\varepsilon}}\frac{(A_{\e,\q}+\Phi_{\e,\q})^2}{H_{\e,\q}^2}\Psi_{\e,\q}\frac{\partial A_{\e,\q}}{\partial q_{1,1}}
+O\left(\s(\log\frac{1}{\s})^{-2}R_{\s}^{-\frac12}e^{-R_{\s}}+\s^2(\log\frac{1}{\s})^{-2}
+(\log\frac{1}{\s})^{-1}\e^2R_{\e}\right)\nonumber\\
=&-\int_{B_{R/\varepsilon}}\frac13\frac{\partial w_{1}^3}{\partial y_1}(\Psi-\Psi(q_1))
+O\left(\s(\log\frac{1}{\s})^{-2}R_{\s}^{-\frac12}e^{-R_{\s}}+\s^2(\log\frac{1}{\s})^{-2}
+(\log\frac{1}{\s})^{-1}\e^2R_{\e}\right)\nonumber\\
=&~o(1)\xi\s|\nabla q_1F(\q)|+O\left(\s(\log\frac{1}{\s})^{-2}R_{\s}^{-\frac12}e^{-R_{\s}}+\s^2(\log\frac{1}{\s})^{-2}
+(\log\frac{1}{\s})^{-1}\e^2R_{\e}\right).
\end{align}}

For $I_3,$ we have
\begin{align}
\label{4.8}
I_3=&\int_{B_{R/\varepsilon}}(1-\mu)w(x-q_1)\frac{\partial w(x-q_1)}{\partial q_{1,1}}+O(e^{-2R_{\e}\sin\left(\frac{\pi}{k}\right)})\nonumber\\
=&\int_{B_{R/\varepsilon}}(\mu-\mu(\e q_1))w_1\frac{\partial w_1}{\partial q_{1,1}}+\int_{B_{R/\varepsilon}}(\mu(\e q_1)-\mu(\e x))w_1\frac{\partial w_1}{\partial q_{1,1}}
+O(e^{-2R_{\e}\sin\left(\frac{\pi}{k}\right)})\nonumber\\
=&\int_{B_{R/\varepsilon}}\Big[\partial_1\mu(\e q_1)\e z_1+\partial_2\mu(\e q_1)\e z_2\Big]w(z)\frac{\partial w(z)}{\partial z_1}
+O(e^{-2R_{\e}\sin\left(\frac{\pi}{k}\right)})\nonumber\\
=&~\e \partial_1\mu(\e q_1)\int_{\B}w\frac{\partial w}{\partial r}\frac{x^2}{r}
+O(e^{-2R_{\e}\sin\left(\frac{\pi}{k}\right)})\nonumber\\
=&~c_2\e^2R_{\e}\partial^2\mu(0)+O(e^{-2R_{\e}\sin\left(\frac{\pi}{k}\right)}+\e^3 R_{\e}^2),
\end{align}
where $c_2=\int_{\B} w\frac{\partial w}{\partial r}\frac{x^2}{r}$ and $c_2<0.$
\medskip

From (\ref{4.1}) to (\ref{4.8}), we get that $W_{\e,1,1}(\bf{q})$ can be represented as follows:
\begin{align}
\label{4.9}
W_{\e,1,1}(\q)=-c_1\xi\s\frac{\partial F(q)}{\partial q_{1,1}}+c_2\e^2R_{\e}\partial^2\mu(0)+h.o.t.
\end{align}
Using the asymptotic behaviour of the Green function $G_{\sigma}$, we have
\begin{align}
\label{4.10}
W_{\e,1,1}(\q)=&-c_1\xi\s\partial_{q_{1,1}}(G_{\s}(q_1,q_2)+G_{\s}(q_1,q_k))+c_2\e^2R_{\e}\partial^2\mu(0)+h.o.t.\nonumber\\
=&~c_1\xi\s R_{\sigma}^{-\frac12}e^{-R_{\s}}
\left(\frac{q_1-q_2}{|q_1-q_2|}
+\frac{q_1-q_k}{|q_1-q_k|}\right)+c_2\e^2R_{\e}\partial^2\mu(0)+h.o.t.\nonumber\\
=&~2c_1\xi\s R_{\sigma}^{-\frac12}e^{-R_{\s}}\sin\left(\frac{\pi}{k}\right)+c_2\e^2R_{\e}\partial^2\mu(0)+h.o.t.
\end{align}
We set the leading term of $W_{\e,1,1}(\q)$ by $\widehat W_{\e,1,1}(\q)$, we see that $\widehat W_{\e,1,1}(\q)$ depends only on $R_{\e}$ and the zero root $R_{\e,0}$ of $\widehat W_{\e,1,1}(\q)$ satisfies
\begin{equation}
\label{4.11}
\xi\left(2\s R_{\e,0}\sin\left(\frac{\pi}{k}\right)\right)^{-\frac32}e^{-2\s R_{\e,0}\sin\left(\frac{\pi}{k}\right)}+c_3D=0,
\end{equation}
where $c_3=\frac{c_2\partial^2\mu(0)}{4c_1(\sin\frac{\pi}{k})^2}$ is negative and
\begin{align*}
R_{\e,0}=\frac{1}{2\s\sin\left(\frac{\pi}{k}\right)}
\left(\log\frac{1}{D}
-\frac32\log\log\frac{1}{D}
-\log\frac{\xi}{c_3}
+O\left(\frac{\log\log\frac{1}{D}}{\log\frac{1}{D}}
\right)
\right)
\end{align*}
If $\xi^{-1}D$ is sufficiently small, then we easily get that equation (\ref{4.11}) admits a unique solution and it is nondegenerate. As a consequence, in the neighborhood of $R_{\e,0}$, we can find $\hat R_{\e,0}$ such that $W_{\e,1,1}=0$ and get the existence of (\ref{1.2}). Thus, we have proved Theorem \ref{th1.1}.
\vspace{1cm}

\section{Stability analysis I: Study of Large Eigenvalues}
 To prove Theorem 1.2, we consider the stability of the solution $(A_{\e},H_{\e})$ for (\ref{1.2}) which was given in Theorem 1.1.

Linearizing the system (GM) around the equilibrium states $(A_{\e},H_{\e})$, we obtain the following eigenvalue problem:
\begin{align}
\label{5.1}
\left\{\begin{array}{l}
\Delta_x\phi_{\e}-\mu(\e x)\phi_{\e}+2\frac{A_{\e}}{H_{\e}}\phi_{\e}-\frac{A_{\e}^2}{H_{\e}^2}\psi_{\e}=\lambda_{\e}\phi_{\e},\\[2mm]
\Delta_x\psi_{\e}-\sigma^2\psi_{\e}+2\xi A_{\e}\phi_{\e}=\tau\lambda_{\e}\sigma^2\psi_{\e},
\end{array}\right.
\end{align}
Here $\lambda_{\e}$ is some complex number and
$$\phi_{\e}\in H^2_N(\BB),\quad\psi_{\e}\in H^2_N(\BB).$$
In this section, we study the large eigenvalues, i.e., we assume that $|\lambda_{\e}|\geq c>0$ for $\e$ small.
The derivation of the a matrix characterising the small eigenvalues will be done in Appendix B since this study is quite technical.
Finally, in the next section, we discuss the small eigenvalues explicitly by considering these matrices. That part is central to understanding the
stability of spike clusters.

 If $\Re(\lambda_{\e})\leq-c,$ we are done. (Then $\lambda_{\e}$ is a stable large eigenvalue.) Therefore we may assume that {$\Re(\lambda_{\e})\geq-c$ and for a subsequence in $\e,D,$} we have $\lambda_{\e}\rightarrow\lambda_0\neq 0$. We shall derive the limiting eigenvalue problem of \eqref{5.1} as $\max\left(\frac{\e}{\sqrt{D}},D\log\frac{\sqrt{D}}{\e}\right)\rightarrow0$ which reduces to a system of NLEPs.

The key reference are Theorem \ref{th2.1} and Lemma \ref{le2.4}.

The second equation in (\ref{5.1}) is equivalent to
\begin{equation}
\label{5.2}
\Delta\psi_{\e}-\sigma^2(1+\tau\lambda_{\e})\psi_{\e}+2\xi_{\e}A_{\e}\phi_{\e}=0.
\end{equation}
We introduce the following:
\begin{equation*}
\sigma_{\lambda_{\e}}=\sigma\sqrt{1+\tau\lambda_{\e}},
\end{equation*}
where in $\sqrt{1+\tau\lambda_{\e}}$ we take the principal part of the square root. This means that the real part of $\sqrt{1+\tau\lambda_{\e}}$ is positive, which is possible because $\Re(1+\tau\lambda_{\e})\geq\frac12$.

Let us assume that $\|\phi_{\e}\|_{H^2(\BB)}=1$. We cut off $\phi_{\e}$ as follows:
$$\phi_{\e,j}(x)=\phi_{\e}(x)\chi_{\e,q_j}(x),$$
where the test function $\chi_{\varepsilon,q_j}(x)$ was introduced in \eqref{5.test}.

{From $\Re(\lambda_{\e})\geq-c$ and the exponential decay of $w$, we can derive from (\ref{5.1}) that}
\begin{align*}
\phi_{\e}=\sum_{j=1}^k\phi_{\e,j}+h.o.t.~\mathrm{in}~H^2_N(\BB).
\end{align*}
Since $\|\phi_{\e}\|_{H^2(\BB)}=1$, by taking a subsequence, we may also assume that $\phi_{\e,j}\rightarrow\phi_j$ in $H^2(\BB)$ as $\max\left(\frac{\e}{\sqrt{D}},D\log\frac{\sqrt{D}}{\e}\right)\rightarrow0$ for $j=1,2,\cdots,k.$ We have by (\ref{5.2})
\begin{align}
\label{5.3}
\psi_{\e}(x)=\xi\int_{B_{R/\varepsilon}}G_{\sigma_{\lambda_\e}}(x,z) A_{\e}(z)\phi_{\e}(z)dz.
\end{align}
At $x=q_i,~i=1,2,\cdots,k$, we calculate
\begin{align}
\label{5.4}
\psi_{\e}(q_i)=~&\xi\int_{B_{R/\varepsilon}}G_{\sigma_{\lambda_\e}}(q_i,z)\sum_{j=1}^k w_{j}(z)\phi_{\e,j}(y)dy+h.o.t.\nonumber\\
=~&\frac{1}{2\pi}\xi\log\frac{1}{\sigma_{\lambda_{\e}}}\int_{B_{R/\varepsilon}}w\phi+h.o.t.
\end{align}
Substituting the above equation in the first equation of (\ref{5.1}), taking the limit
\\
$\max\left(\frac{\e}{\sqrt{D}},D\log\frac{\sqrt{D}}{\e}\right)\rightarrow0$,
we get
\begin{align}
\label{5.5}
\Delta_x\phi_i-\phi_i+
2w\phi_i- \frac{2}{1+\tau\lambda_0}\,
\frac{\int_{\B} w\phi_i\mathrm{d}y}{\int_{\B} w^2\mathrm{d}y}w^2=\lambda_0\phi_i,~i=1,2\cdots,k,
\end{align}
where $\phi_i\in H^2(\B)$. Then we have

\begin{theorem}
\label{th5.1}
Let $\lambda_{\e}$ be an eigenvalue of (\ref{5.1}) such that $\Re(\lambda_{\e})>-c$ for some $c>0$.
\begin{enumerate}
  \item [(1)] Suppose that (for suitable sequence $\max\left(\frac{\e}{\sqrt{D}},D\log\frac{\sqrt{D}}{\e}\right)\rightarrow0$) we have $\lambda_{\e}\rightarrow\lambda_0\neq0$. Then $\lambda_0$ is an eigenvalue of the problem (NLEP) given in \eqref{5.5}.
  \item [(2)] Let $\lambda_0\neq0$ with $\Re(\lambda_0)>0$ be an eigenvalue of the problem (NLEP) given in \eqref{5.5}. Then for $\max\left(\frac{\e}{\sqrt{D}},D\log\frac{\sqrt{D}}{\e}\right)$ small enough, there is an eigenvalue $\lambda_{\e}$ of (\ref{5.1}) with $\lambda_{\e}\rightarrow\lambda_0$ as $\max\left(\frac{\e}{\sqrt{D}},D\log\frac{\sqrt{D}}{\e}\right)\rightarrow0.$
\end{enumerate}
\end{theorem}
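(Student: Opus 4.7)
For Part (1), normalise $\|\phi_\e\|_{H^2(B_{R/\e})}=1$ and fix a subsequence along which $\lambda_\e\to\lambda_0\neq0$. Using the cut-offs $\chi_{\e,q_j}$ from Section~3, write $\phi_\e=\sum_{j=1}^k\phi_{\e,j}+R_\e$ with $\phi_{\e,j}=\chi_{\e,q_j}\phi_\e$. The exponential decay of $A_\e$ and the hypothesis $\Re\lambda_\e\ge -c$ together imply that the coefficient $\mu(\e x)-2A_\e/H_\e+\lambda_\e$ in the first equation of (5.1) is uniformly bounded away from zero outside fixed neighbourhoods of the $q_j$'s, which by a standard elliptic estimate yields $\|R_\e\|_{H^2}\to 0$. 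Interior compactness then provides, along a further subsequence, limits $\phi_j\in H^2(\B)$ of the translates $\phi_{\e,j}(\cdot+q_j)$.

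To identify the limiting NLEP, invert the second equation of (5.1) as
\[
\psi_\e(x)=2\xi\int_{B_{R/\e}}G_{\sigma_{\lambda_\e}}(x,z)A_\e(z)\phi_\e(z)\,dz,
\]
with $\sigma_{\lambda_\e}=\sigma\sqrt{1+\tau\lambda_\e}$ (principal branch, permitted because $\Re(1+\tau\lambda_\e)\ge 1/2$). Evaluating at $x=q_i$ and splitting the integral over $\delta$-neighbourhoods of the points $q_l$: the diagonal piece $l=i$ uses the logarithmic expansion (2.13) after rescaling by $\sigma_{\lambda_\e}$, producing the leading contribution $\frac{1}{2\pi}\log(1/\sigma_{\lambda_\e})\int_{\B}w\phi_i$; the off-diagonal pieces $l\neq i$ use (2.14) and are of order $e^{-\sigma R_\e}$, which is $o(1)$ since $\sigma R_\e\to\infty$ by (3.2). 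Combining with the definition of $\xi$ in (3.4) gives the limit value of $\psi_\e(q_i)$ as a multiple of $\int w\phi_i/\int w^2$. Substituting back into the first equation of (5.1), using $\mu(\e x)\to\mu(0)=1$ on the inner scale and $H_\e(q_i)\to 1$ from the choice of $\xi$, yields the NLEP (5.5) for each $\phi_i$.

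For Part (2), the plan is a perturbation argument. Given an NLEP eigenvalue $\lambda_0$ with $\Re\lambda_0>0$ and eigenfunction $(\phi_1,\ldots,\phi_k)$, build an approximate eigenfunction $\phi_\e^{app}(x)=\sum_j\chi_{\e,q_j}(x)\phi_j(x-q_j)$ and let $\psi_\e^{app}$ solve the second equation of (5.1) exactly with source $2\xi A_\e\phi_\e^{app}$. The residual in the first equation is small in $L^2$, with size controlled by the same Green-function error bounds as in Part~(1). A Liapunov--Schmidt reduction on the linearised operator (analogous to $\tilde L_{\e,\q}$ in Section~3 but now carrying the spectral parameter $\lambda$) reduces the problem to solving a determinantal equation $h_\e(\lambda)=0$ whose limit as $\max(\e/\sqrt{D},D\log(\sqrt{D}/\e))\to 0$ is the NLEP characteristic function, known to vanish at $\lambda_0$. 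A Rouch\'e-type argument then produces a zero $\lambda_\e\to\lambda_0$; the assumption $\Re\lambda_0>0$ ensures we remain bounded away from the approximate kernel $\mathcal{K}_{\e,\q}$ and the stable part of the spectrum given by Theorem~\ref{th2.1}.

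The main obstacle is the Green-function analysis in Part~(1): the short-range expansion (2.13) and the long-range estimate (2.14) must be patched uniformly in $\e$, and the two smallness conditions $\sigma R_\e\to\infty$ and $\log(1/\sigma)\to\infty$, both encoded in $\max(\e/\sqrt{D},D\log(\sqrt{D}/\e))\to 0$, each play a distinct role: the former decouples the $k$ spikes in the nonlocal term so that the NLEPs (5.5) are uncoupled across $i$, while the latter singles out the logarithmic contribution as the leading order and makes the $H$-term in (2.13) lower-order. Throughout, the uniform lower bound $\Re\lambda_\e\ge -c$ is essential to keep $\sigma_{\lambda_\e}$ in the open right half-plane so that $G_{\sigma_{\lambda_\e}}$ is well-defined and the estimates (2.13)--(2.14) remain valid.
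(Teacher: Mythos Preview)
Your Part (1) is essentially the paper's own argument: the cut-off decomposition $\phi_\e=\sum_j\phi_{\e,j}+R_\e$, the remainder estimate via exponential decay of $A_\e$ and $\Re\lambda_\e\ge -c$, the Green-function representation of $\psi_\e$ and its evaluation at $q_i$, and the passage to the NLEP are exactly the steps carried out in (5.2)--(5.5) immediately before the theorem; the paper's proof of (1) simply refers back to this computation together with the asymptotics of Appendix~A. Your additional remarks on why the off-diagonal Green-function contributions decouple (via $\sigma R_\e\to\infty$) and why the logarithmic term dominates (via $\log(1/\sigma)\to\infty$) are correct and make explicit what the paper leaves as ``$h.o.t.$''

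For Part (2) you take a genuinely different route. The paper invokes Dancer's compactness argument (citing \cite{D} and Chapter~4 of \cite{ww-book}): one rewrites the first equation of (5.1) in the form $(I-K_\e(\lambda))\phi=0$ with $K_\e(\lambda)$ compact, shows $K_\e(\lambda)\to K_0(\lambda)$ in operator norm uniformly on compact subsets of $\{\Re\lambda>0\}$, and deduces persistence of eigenvalues from the stability of the index of $I-K$ under norm-small perturbations. Your proposal instead builds an approximate eigenpair, runs a $\lambda$-dependent Liapunov--Schmidt reduction, and applies Rouch\'e to the reduced determinantal equation. Both are standard and correct; Dancer's argument is shorter and sidesteps the explicit reduced equation, while your approach is more constructive and in principle yields a rate for $|\lambda_\e-\lambda_0|$. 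One point to tighten: the projection in your Liapunov--Schmidt step must be onto (a glued version of) the NLEP eigenspace at $\lambda_0$, not onto the translational kernel $\mathcal{K}_{\e,\q}$, since the latter corresponds to the zero eigenvalue of the NLEP rather than to $\lambda_0$; once this is set up correctly, the limiting reduced equation is precisely the condition that $\lambda_0$ be an NLEP eigenvalue, and Rouch\'e applies.
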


\begin{proof}
(1) of Theorem \ref{th5.1} follows by asymptotic analysis similar to the one obtained in Appendix A.

To prove (2) of Theorem \ref{th5.1}, we follow a compactness argument of Dancer.
For the details we refer to Chapter 4 of \cite{ww-book}.

\end{proof}

We now study the stability of \eqref{5.1}, by Lemma \ref{le2.4}, for any nonzero eigenvalue $\lambda_0$ in (\ref{5.5}) we have
$$\Re(\lambda_0)\leq c_0<0~\mathrm{for~some}~c_0>0.$$
Thus, by Theorem \ref{th5.1}, for $\max\left(\frac{\e}{\sqrt{D}},D\log\frac{\sqrt{D}}{\e}\right)$ small enough, all nonzero large eigenvalues of (\ref{5.1}) have strictly negative real parts. More precisely, all eigenvalues $\lambda_{\e}$
 of (\ref{5.1}) for which $\lambda_{\e}\rightarrow\lambda_0\neq0$ holds, satisfy $\Re(\lambda_{\e})\leq-c<0.$
\medskip

In conclusion, we have finished studying the large eigenvalues (of order $O(1)$) and derived results on their stability properties. It remains to study the small eigenvalues (of order $o(1)$) which will be done in Appendix B and the next section.

\section{Stability Analysis III: The study of the matrix $M_{\mu}(\q)$}
In this section, we shall study the matrix $M_{\mu}(\q)$, i.e., the hessian matrix of the term
\begin{align}
\label{8.1}
{\prod({\q})}=
\sum_{i\neq j}\xi\frac{1}{(\sigma|q_i-q_j|)^{\frac12}}e^{-\sigma|q_i-q_j|}+c_3\sum_{i=1}^k\mu(\e q_i),
\end{align}
where $c_3=-\frac{c_2}{c_1}$ with $c_1,c_2$ given in (\ref{4.4}) and (\ref{4.8}) respectively.

Using
\begin{equation}
\label{distq}
|q_1-q_2|=\frac{1}{\sigma}
\left(\log\frac{1}{D}
-\frac{3}{2}\log\log\frac{1}{D}
-\log\frac{\xi}{c_3}
+O\left(\frac{\log\log\frac{1}{D}}{\log\frac{1}{D}}
\right)
\right),
\end{equation}
(see (\ref{4.11}),
it is not difficult to see that
in leading order
\begin{align*}
\frac{\partial^2}{\partial q_{1,i}\partial q_{1,j}}{\prod({\q})}
 \sim ~&\xi\sigma^{\frac{3}{2}}e^{-\sigma|q_1-q_2|}\frac{1}{|q_1-q_2|^\frac{5}{2}}(q_1-q_2)_i(q_1-q_2)_j,
\nonumber\\
&+\xi\sigma^{\frac{3}{2}}e^{-\sigma|q_1-q_k|}\frac{1}{|q_1-q_k|^\frac{5}{2}}(q_1-q_k)_i(q_1-q_k)_j,
\end{align*}
where $i,j=1,2.$ By rotational symmetry, it is enough to compute
$$\frac{\partial^2}{\partial q_{1,i}\partial q_{1,j}},\frac{\partial^2}{\partial q_{1,i}\partial q_{2,j}},\frac{\partial^2}{\partial q_{1,i}\partial q_{k,j}},~i,j=1,2.$$
Terms which depend on $q_m$ are obtained by suitable rotation of terms which contain $q_1$. By straightforward computation, we have
\begin{align*}
\frac{\partial {\prod({\q})}}{\partial q_{1,i}}=&-\xi\sigma^{\frac12}e^{-\sigma|q_1-q_2|}\frac{(q_1-q_2)_i}{|q_1-q_2|^{\frac32}}-
\xi\frac{1}{2\sigma^{\frac12}}e^{-\sigma|q_1-q_2|}\frac{(q_1-q_2)_i}{|q_1-q_2|^{\frac52}}\\
&-\xi\sigma^{\frac12}e^{-\sigma|q_1-q_k|}\frac{(q_1-q_k)_i}{|q_1-q_k|^{\frac32}}-
\xi\frac{1}{2\sigma^{\frac12}}e^{-\sigma|q_1-q_k|}\frac{(q_1-q_k)_i}{|q_1-q_k|^{\frac52}}+h.o.t.,
\end{align*}
and
\begin{align}
\label{8.2}
\frac{\partial^2{\prod({\q})}}{\partial q_{1,i}\partial q_{1,j}}=
~&\xi\sigma^{\frac32}e^{-\sigma|q_1-q_2|}\frac{(q_1-q_2)_i(q_1-q_2)_j}{|q_1-q_2|^{\frac52}}\nonumber\\
&+\xi\sigma^{\frac32}e^{-\sigma|q_1-q_k|}\frac{(q_1-q_k)_i(q_1-q_k)_j}{|q_1-q_2|^{\frac52}}+h.o.t.,
\end{align}
For the terms $\frac{\partial^2{\prod({\q})}}{\partial q_{1,i}\partial q_{2,j}},~i,j=1,2$, we note that
\begin{align*}
\frac{\partial{\prod({\q})}}{\partial q_{1,1}}=\xi\sigma^{\frac12}e^{-\sigma|q_2-q_1|}\frac{(q_2-q_1)_1}{|q_2-q_1|^{\frac32}}
+\xi\frac{1}{2\sigma^{\frac12}}e^{-\sigma|q_2-q_1|}\frac{(q_2-q_1)_1}{|q_2-q_1|^{\frac52}}+h.o.t.,
\end{align*}
and
\begin{align*}
\frac{\partial{\prod({\q})}}{\partial q_{1,2}}=\xi\sigma^{\frac12}e^{-\sigma|q_2-q_1|}\frac{(q_2-q_1)_2}{|q_2-q_1|^{\frac32}}+
\xi\frac{1}{2\sigma^{\frac12}}e^{-\sigma|q_2-q_1|}\frac{(q_2-q_1)_2}{|q_2-q_1|^{\frac52}}+h.o.t..
\end{align*}
Then, we get
\begin{align}
\label{8.3}
\frac{\partial^2{\prod({\q})}}{\partial q_{1,1}\partial q_{2,i}}=-\xi\sigma^{\frac32}e^{-\sigma|q_2-q_1|}\frac{(q_2-q_1)_1(q_2-q_1)_i}{|q_2-q_1|^{\frac52}}+h.o.t.,~i=1,2
\end{align}
and
\begin{align}
\label{8.4}
\frac{\partial^2{\prod({\q})}}{\partial q_{1,2}\partial q_{2,i}}=-\xi\sigma^{\frac32}e^{-\sigma|q_1-q_2|}\frac{(q_2-q_1)_2(q_2-q_1)_i}{|q_1-q_2|^{\frac52}}+h.o.t.,~i=1,2.
\end{align}
Similarly, for the terms $\frac{\partial^2{\prod({\q})}}{\partial q_{1,i}\partial q_{k,j}},~i,j=1,2$, we have
\begin{align}
\label{8.5}
\frac{\partial^2{\prod({\q})}}{\partial q_{1,1}\partial q_{k,i}}=-\xi\sigma^{\frac32}e^{-\sigma|q_k-q_1|}\frac{(q_k-q_1)_1(q_k-q_1)_i}{|q_k-q_1|^{\frac52}}+h.o.t.,~i=1,2
\end{align}
and
\begin{align}
\label{8.6}
\frac{\partial^2{\prod({\q})}}{\partial q_{1,2}\partial q_{k,i}}=-\xi\sigma^{\frac32}e^{-\sigma|q_k-q_1|}\frac{(q_k-q_1)_2(q_k-q_1)_i}{|q_k-q_1|^{\frac52}}+h.o.t.,~i=1,2.
\end{align}
We now compute these expression in a coordinate system of tangential and normal coordinates around each spike. We remark that these coordinates are the same as in \cite{dwy}.
The spike locations are given by
\[
q_j^0= (\frac{R_\sigma}{
\s\sin\left(\frac{\pi}{k}\right)}
\cos\theta_j,
\frac{R_\sigma}{
\s\sin\left(\frac{\pi}{k}\right)}
\sin\theta_j),\quad j=1,\ldots,k,
\]
where
\[\theta_j=\frac{(j-1)2\pi}{k}+\alpha\]
and $\alpha\in \mathbb{R}$. Note that the phase shift $\alpha$ appears in the problem due to the rotational invariance of $\mu=\mu(|y|)$ and we can choose $\alpha=0$.
Then in local coordinates we can write
\[
q_j=q_j^0+q_{j,1}\frac{q_j}{|q_j|}+q_{j,2}\frac{q_j^\perp}{|q_j^\perp|},\quad j=1,\ldots,k,
\]
{where $q_j$} is the radial (normal) vector and the tangential vector $q_j^\perp$ is obtained from $q_j$ by rotation of $\pi/2$ in anti-clockwise direction.

From (\ref{8.2}) to (\ref{8.6}), using the local coordinate frames and elementary trigonometry, the leading order of the matrix $M_{\mu}(\q)$ is
\begin{align}
\label{8.7}
M_{\mu}(\q)
=&~\xi\sigma^{\frac32}e^{-\sigma|q_1-q_2|}\frac{1}{|q_1-q_2|^{\frac52}}
\left[\begin{matrix}
(\sin\frac{\pi}{k})^2(A_1+4I)&\sin\frac{\pi}{k}\cos\frac{\pi}{k}A_2\\[3mm]
-\sin\frac{\pi}{k}\cos\frac{\pi}{k}A_2&-(\cos\frac{\pi}{k})^2A_1
\end{matrix}\right]+h.o.t.,
\end{align}
where
\begin{align*}
A_1=\left[\begin{matrix}
-2&1&0&\cdots&0&1\\
1&-2&1&\cdots&0&0\\
\vdots&\vdots&\vdots&\ddots&\vdots&\vdots\\
1&0&0&\cdots&1&-2
\end{matrix}
\right]\quad\mathrm{and}\quad
A_2=\left[\begin{matrix}
0&1&0&\cdots&0&-1\\
-1&0&1&\cdots&0&0\\
\vdots&\vdots&\vdots&\ddots&\vdots&\vdots\\
1&0&0&\cdots&-1&0
\end{matrix}
\right].
\end{align*}

Before analyzing the matrix in (\ref{8.7}), we need some basic facts about circulant matrices. We follow the presentation in \cite{dwy} and include this material here for completeness.
Denote the $k$-dimensional complex vector space and the ring of $k\times k$ complex matrices by $\mathbb{C}^k$ and $\mathbb{M}_k,$ respectively. Let ${\bf b}=(b_1,b_2,\cdots,b_k)\in\mathbb{C}^k$, we define a shift operator $S:\mathbb{C}^k\rightarrow\mathbb{C}^k$ by
\begin{align*}
S(b_1,b_2,\cdots,b_k)=(b_k,b_1,\cdots,b_{k-1}).
\end{align*}

\noindent {{\bf {Definition 6.1.}}} The circulant matrix $B=\mbox{circ}({\bf b})$ associated to the vector $${\bf b}=(b_1,b_2,\cdots,b_k)\in\mathbb{C}^k$$ is the $k\times k$ matrix whose $n$th row is $S^{n-1}{\bf b}$:
\begin{align*}
B=\left(\begin{matrix}
b_1&b_2&\cdots&b_{k-1}&b_k\\
b_k&b_1&\cdots&b_{k-2}&b_{k-1}\\
\vdots&\vdots&\ddots&\vdots&\vdots\\
b_3&b_4&\cdots&b_1&b_2\\
b_2&b_3&\cdots&b_k&b_1
\end{matrix}\right).
\end{align*}
We denote by $\mbox{circ}(k)\subset\mathbb{M}_k$ the set of all $k\times k$ complex circulant matrices.
\medskip

With this notation, both $A_1$ and $A_2$ are $k\times k$ circulant matrices. In fact,
\begin{align*}
A_1=\mbox{circ}\{(-2,1,0,\cdots,0,1)\}~\mathrm{and}~A_2=\mbox{circ}\{(0,1,0,\cdots,0,-1)\}.
\end{align*}
Let $\epsilon=e^{\frac{2\pi i}{k}}$ be a primitive $k$th root of unity, we define
\begin{align*}
X_l=\frac{1}{\sqrt{k}}\left(1,\epsilon^{l},\epsilon^{2l},\cdots,\epsilon^{(k-1)l}\right)^T\in\mathbb{C}^k, ~\mathrm{for}~l=0,\cdots,k-1,
\end{align*}
and
\begin{align*}
P_k=\left(\begin{matrix}
1&1&\cdots&1&1\\
1&\epsilon&\cdots&\epsilon^{k-2}&\epsilon^{k-1}\\
\vdots&\vdots&\ddots&\vdots&\vdots\\
1&\epsilon^{k-2}&\cdots&\epsilon^{(k-2)^2}&\epsilon^{(k-2)(k-1)}\\
1&\epsilon^{k-1}&\cdots&\epsilon^{(k-1)(k-2)}&\epsilon^{(k-1)^2}
\end{matrix}\right).
\end{align*}

For the circulant matrix $B=\mbox{circ}({\bf b})$, let
\begin{align*}
\lambda_l=b_1+b_2\epsilon^l+\cdots+b_k\epsilon^{(k-1)l},~\mathrm{for}{l=0,\cdots,k-1}.
\end{align*}
A simple computation shows that $BX_l=\lambda_lX_l$. Hence $\lambda_l$ is an eigenvalue of $B$ with normalised eigenvector $X_l$. Since $\{X_1,\cdots,X_k\}$ is a linearly independent set of vectors in $\mathbb{C}^k$, all of the eigenvalues of $B$ are given by $\lambda_l,~l=0,\cdots,k-1.$ By direct computation, the eigenvalues of $A_1$ are
\begin{align*}
\lambda_{1,l}=-2+\epsilon^l+\epsilon^{(k-1)l}=-4\sin^2\frac{l\pi}{k},~\mathrm{for}~l=0,\cdots,k-1,
\end{align*}
and the eigenvalues of $A_2$ are
\begin{align*}
\lambda_{2,l}=\epsilon^l-\epsilon^{(k-1)l}=2i\sin\frac{2l\pi}{k},~\mathrm{for}~l=0,\cdots,k-1.
\end{align*}
Let $\mbox{diag}(a_1,a_2,\cdots,a_k)$ denote the diagonal matrix with diagonal entries $a_1,a_2,\cdots,a_k$ and
$$\mathcal{M}=\left[\begin{matrix}
(\sin\frac{\pi}{k})^2(A_1+4I)&\sin\frac{\pi}{k}\cos\frac{\pi}{k}A_2\\[3mm]
-\sin\frac{\pi}{k}\cos\frac{\pi}{k}A_2&-(\cos\frac{\pi}{k})^2A_1
\end{matrix}\right].$$
From the above discussion for the circulant matrix, we have
\begin{align*}
P^{-1}\mathcal{M}P=&
\left[\begin{matrix}
P_k^{-1}&0\\0&P_k^{-1}
\end{matrix}\right]\left[\begin{matrix}
(\sin\frac{\pi}{k})^2(A_1+4I)&\sin\frac{\pi}{k}\cos\frac{\pi}{k}A_2\\
-\sin\frac{\pi}{k}\cos\frac{\pi}{k}A_2&-(\cos\frac{\pi}{k})^2A_1
\end{matrix}\right]
\left[\begin{matrix}
P_k&0\\0&P_k
\end{matrix}\right]\nonumber\\
=&\left[\begin{matrix}
4(\sin\frac{\pi}{k})^2(I-D_1)&i\sin\frac{2\pi}{k}D_2\\
-i\sin\frac{2\pi}{k}D_2&4(\cos\frac{\pi}{k})^2D_1
\end{matrix}
\right],
\end{align*}
where
\begin{align*}
D_1=\mbox{diag}\left(0,(\sin\frac{\pi}{k})^2,(\sin\frac{2\pi}{k})^2,\cdots,(\sin\frac{(k-1)\pi}{k})^2\right),
\end{align*}
and
\begin{align*}
D_2=\mbox{diag}\left(0,\sin\frac{2\pi}{k},\sin\frac{4\pi}{k},\cdots,\sin\frac{2(k-1)\pi}{k}\right).
\end{align*}
 Next we divide the matrix $P^{-1}\mathcal{M}P$ into $k$ two by two matrixes, where the $l-$th matrix ($l=0,1,\cdots,k-1$) is given by
\begin{align}
\label{8.8}
\left[\begin{matrix}
4(\sin\frac{\pi}{k})^2(\cos\frac{l\pi}{k})^2&i\sin\frac{2\pi}{k}\sin\frac{2l\pi}{k}\\[3mm]
-i\sin\frac{2\pi}{k}\sin\frac{2l\pi}{k}&4(\cos\frac{\pi}{k})^2(\sin\frac{l\pi}{k})^2
\end{matrix}\right].
\end{align}
It is easy to see that the determinant of the above matrix is $0$ and its trace is positive. Further, we see that the zero eigenvector of the above matrix is
\begin{equation}
\label{8.9}
(\cos\frac{\pi}{k}\sin\frac{l\pi}{k},
i\sin\frac{\pi}{k}\cos\frac{l\pi}{k})^T.
\end{equation}
\medskip

Since the leading order matrix $\mathcal{M}$ admits zero eigenvalues with geometric multiplicity $k$, we have to expand the matrix $M_{\mu}(\q)$ to the next order to determine if these small {eigenvalues} have positive or negative real part.

Before doing that, we point out an useful fact. Let us consider for example the term $\frac{\partial^2{\prod({\q})}}{\partial q_{2,1}\partial q_{1,1}}$. By direct computation we get
\begin{equation}
\begin{aligned}
\label{8.10}
\frac{\partial{\prod({\q})}}{\partial q_{1,1}}=~&\xi\sigma^{\frac12}e^{-\sigma|q_1-q_2|}\frac{1}{|q_2-q_1|^{\frac{3}{2}}}(q_2-q_1)_1\\
=~&\tilde{K}(|q_1-q_2|)(q_2-q_1)_1.
\end{aligned}
\end{equation}
Computing another derivative of $\tilde{K}(|q_1-q_2|)(q_2-q_1)_1$ with respect to $q_{2,1},$ we note that there are two types of terms:
\[\left[\frac{\partial\tilde{K}(|q_1-q_2|)}{\partial q_{2,1}}\right](q_2-q_1)_1(q_2-q_1)_1
\quad \mbox{ and } \quad
\tilde{K}(|q_1-q_2|)\frac{\partial(q_2-q_1)_1}{\partial q_{2,1}}.
\]
The first term is of the same symmetry class as the leading order term (i.e., the higher order term differs from the leading order term only by some small factor). Therefore, this term can be absorbed into the leading-order matrix $\mathcal{M}$.

However, the second term is different and it has to be taken into account. In fact, we will see that these type of terms can be used to resolve the stability problem. We can re-write the second term as follows:
\begin{align}
\label{8.11}
\tilde{K}(|q_1-q_2|)\frac{\partial(q_2-q_1)_1}{\partial q_{2,1}}=-\tilde{K}(|q_1-q_2|)\frac12\frac{\partial^2}{\partial q_{2,1}\partial q_{1,1}}|q_2-q_1|^2.
\end{align}
Hence, up to some factors it is enough for us to consider the terms $\frac12\frac{\partial^2}{\partial q_{2,j}\partial q_{1,i}}|q_1-q_2|^2,~i,j=1,2.$ These terms together with $c_3\e^2\mu''(0)$ are the next order terms in the matrix $M_{\mu}(\q)$.

Using the local coordinate frames of $q_1$ and $q_2$ to express Cartesian local coordinates $x_{i,j},\,i,j,=1,2$, we get
\[
x_{1,1}=q_{1,1},\quad
x_{1,2}=q_{1,2},
\]
\[
x_{2,1}=q_{2,1}\cos\frac{2\pi}{k}-q_{2,2}\sin\frac{2\pi}{k},\quad
x_{2,2}=q_{2,1}\sin\frac{2\pi}{k}+q_{2,2}\cos\frac{2\pi}{k}.
\]
This implies
{\allowdisplaybreaks
{\begin{align*}
|q_1-q_2|^2=
~&\left|(q_1^0+q_{1,1}\frac{q_1}{|q_1|}+q_{1,2}\frac{q_1^\perp}{|q_1^\perp|})
-(q_2^0+q_{2,1}\frac{q_2}{|q_2|}+q_{2,2}\frac{q_2^\perp}{|q_2^\perp|})\right|^2\nonumber\\
\nonumber\\
=~&(R_{\sigma}\cos\frac{\pi}{k}+x_{2,2}-x_{1,2})^2
+(R_\sigma \sin\frac{\pi}{k}- x_{2,1}+x_{1,1})^2
\nonumber\\
=~&R_{\sigma}^2+2R_{\sigma}
(\cos\frac{2\pi}{k}q_{2,2}+\sin\frac{2\pi}{k}q_{2,1}-q_{1,2})\cos\frac{\pi}{k}
\nonumber\\ & +
2R_{\sigma}
(-\cos\frac{2\pi}{k}q_{2,1}+\sin\frac{2\pi}{k}q_{2,2}+q_{1,1})\sin\frac{\pi}{k}
\nonumber\\ &
+(\cos\frac{2\pi}{k}q_{2,2}+\sin\frac{2\pi}{k}q_{2,1}-q_{1,2})^2
+(-\cos\frac{2\pi}{k}q_{2,1}+\sin\frac{2\pi}{k}q_{2,2}+q_{1,1})^2
\nonumber\\=~&R_{\sigma}^2+2R_{\sigma}
(\cos\frac{2\pi}{k}q_{2,2}+\sin\frac{2\pi}{k}q_{2,1}-q_{1,2})\cos\frac{\pi}{k}
\nonumber\\ & +
2R_{\sigma}
(-\cos\frac{2\pi}{k}q_{2,1}+\sin\frac{2\pi}{k}q_{2,2}+q_{1,1})\sin\frac{\pi}{k}
\nonumber\\
&+q_{1,1}^2+q_{1,2}^2+q_{2,1}^2+q_{2,2}^2
-2q_{1,1}q_{2,1}\cos\frac{2\pi}{k}
+2q_{1,1}q_{2,2}\sin\frac{2\pi}{k}
\nonumber\\ &
-2q_{1,1}q_{2,2}\cos\frac{2\pi}{k}
-2q_{1,2}q_{2,1}\sin\frac{2\pi}{k}.
\end{align*}}}

As a consequence, we have
\begin{align}
\label{8.12}
\frac{\partial^2|q_1-q_2|^2}{\partial^2q_{i,j}}=2,~i,j=1,2,
\quad
\frac{\partial^2|q_1-q_2|^2}{\partial q_{1,1}\partial q_{2,1}}
=\frac{\partial^2|q_1-q_2|^2}{\partial q_{1,2}\partial q_{2,2}}
=-2\cos\frac{2\pi}{k},
\end{align}
\begin{align}
\label{8.13}
\frac{\partial^2|q_1-q_2|^2}{\partial q_{1,1}\partial q_{2,2}}=2\sin\frac{2\pi}{k},\,
~\frac{\partial^2|q_1-q_2|^2}{\partial q_{1,2}\partial q_{2,1}}=-2\sin\frac{2\pi}{k},\,
\frac{\partial^2|q_1-q_2|^2}{\partial q_{1,1}\partial q_{1,2}}=
~\frac{\partial^2|q_1-q_2|^2}{\partial q_{2,1}\partial q_{2,2}}=0.
\end{align}
Similarly, in local coordinates $q_1,q_2$ we have
\begin{align*}
|q_1|^2=\left|q_1^0+q_{1,1}\frac{q_1}{|q_1|}+q_{1,2}\frac{q_1^\perp}{|q_1^\perp|}\right|^2
=R_{\s}^2+q_{1,1}^2+q_{1,2}^2+2q_{1,1}|R_{\s}|,
\end{align*}
where we used $q_{1}\cdot q_1^\perp=0.$ This implies
\begin{align}
\label{8.14}
\frac{\partial^2|q_1|^2}{\partial q_{1,1}^2}=\frac{\partial^2|q_1|^2}{\partial q_{1,2}^2}=2,~
\frac{\partial^2|q_1|^2}{\partial q_{1,1}\partial q_{1,2}}=0.
\end{align}
For the terms $c_3\e^2\mu''(0),$ from (\ref{4.10}) we derive that
\begin{align}
\label{8.15}
4(\sin\frac{\pi}{k})^2\tilde{K}(|q_1-q_2|)=c_3\e^2\mu''(0).
\end{align}
From the above discussion and (\ref{8.10})-(\ref{8.15}), expanding the matrix $M_{\mu}(\q)$ we get the following second order contribution:
\begin{align}
\label{8.16}
-\tilde{K}(|q_1-q_2|)\mathcal{M}_2
=-\tilde{K}(|q_1-q_2|)
\left[\begin{matrix}
\cos\frac{2\pi}{k}A_1&-\sin\frac{2\pi}{k}A_2\\[2mm]
\sin\frac{2\pi}{k}A_2&\cos\frac{2\pi}{k}A_1
\end{matrix}\right].
\end{align}
By using the matrix $P_k$, we diagonalize the matrix $\mathcal{M}_2$,
\begin{align*}
\left[\begin{matrix}
P_k^{-1}&0\\0&P_k^{-1}
\end{matrix}\right]
\mathcal{M}_2
\left[\begin{matrix}
P_k&0\\[2mm]
0&P_k
\end{matrix}\right]
=-\tilde{K}(|q_1-q_2|)
\left[\begin{matrix}
-4\cos\frac{2\pi}{k}D_1&-2i\sin\frac{2\pi}{k}D_2\\2i\sin\frac{2\pi}{k}D_2
&-4\cos\frac{2\pi}{k}D_1
\end{matrix}\right].
\end{align*}
From the discussion of the leading-order matrix $P^{-1}\mathcal{M}P$, we know that the vectors
$$v_{l,1}=(0,\cdots,\underbrace{\cos\frac{\pi}{k}\sin\frac{l\pi}{k}}_{l+1},0,\cdots,
\underbrace{i\sin\frac{\pi}{k}\cos\frac{l\pi}{k}}_{k+l+1},\cdots,0)^T,\quad
 (l=0,1,\cdots,k-1)
$$
are the eigenvectors with zero eigenvalues of the diagonal form.
To show the stability of the eigenvalues in the linear subspace spanned by these eigenvectors, we have to evaluate the bilinear form with respect to these eigenvectors and show that
\[\mu_l=
\frac{\langle (P^{-1}\mathcal{M}_2P)v_{l,1},v_{l,1}\rangle}{\langle v_{l,1},v_{l,1}\rangle}
\geq 0, \quad (l=0,1,\ldots,k-1).
\]
If $\langle (P^{-1}\mathcal{M}_2P)v_{l,1},v_{l,1}\rangle= 0$
some further study is needed.
We compute
\begin{align*}
\mu_l=\frac{\langle(P^{-1}\mathcal{M}_2P)v_{l,1},v_{l,1}\rangle}{\langle v_{l,1},v_{l,1}\rangle}
=&
-4\cos\frac{2\pi}{k}(\sin\frac{l\pi}{k})^2
\\& +
\frac{4\sin\frac{2\pi}{k}\sin\frac{2l\pi}{k}
\cos\frac{\pi}{k}\sin\frac{\pi}{k}\cos\frac{l\pi}{k}\sin\frac{l\pi}{k}}
{
(\cos\frac{\pi}{k})^2(\sin\frac{l\pi}{k})^2
+
(\sin\frac{\pi}{k})^2(\cos\frac{l\pi}{k})^2
}.
\end{align*}

Next we discuss when all eigenvalues are positive (linearly stable solution) or some eigenvalues are negative (linearly unstable solution).

For $l=0$ we have $\mu_l=0$. This eigenvalue and its eigenvector are connected to rotational invariance of solutions.

For $l=1$ we compute the numerator in the expression for $\mu_1$ as
\[-8\cos\frac{2\pi}{k}(\sin\frac{\pi}{k})^4
(\cos\frac{\pi}{k})^2
+16
(\sin\frac{\pi}{k})^4(\cos\frac{\pi}{k})^4
\]
\[
=8(\sin\frac{\pi}{k})^4(\cos\frac{\pi}{k})^2>0.
\]

For $l=k-1$ we compare with the case $l=1$. The terms $\sin\frac{2l\pi}{k}$ and $\cos\frac{l\pi}{k}$ change sign, the other terms are the same as for $l=1$. The result is the same as for $l=1$.
The eigenvalues $l=1$ and $l=k-1$ together with their eigenvectors correspond to translations and they are stable.

For $l=2$ we compute the numerator of $\mu_2$ as
\[
-4\cos\frac{2\pi}{k}(\sin\frac{2\pi}{k})^2
[(\cos\frac{\pi}{k})^2(\sin\frac{2\pi}{k})^2
+
(\sin\frac{\pi}{k})^2(\cos\frac{2\pi}{k})^2]
\]\[
+4\sin\frac{2\pi}{k}\sin\frac{4\pi}{k}
\cos\frac{\pi}{k}\sin\frac{\pi}{k}\cos\frac{2\pi}{k}\sin\frac{2\pi}{k}
\]\[=
-4\cos\frac{2\pi}{k}(\sin\frac{2\pi}{k})^2
[(\cos\frac{\pi}{k})^2(\sin\frac{2\pi}{k})^2
+
(\sin\frac{\pi}{k})^2(\cos\frac{2\pi}{k})^2
-(\sin\frac{2\pi}{k})^2((\cos\frac{\pi}{k})^2-(\sin\frac{\pi}{k})^2)]
\]\[
=-4\cos\frac{2\pi}{k}(\sin\frac{2\pi}{k})^2(\sin\frac{\pi}{k})^2.
\]
Thus $\mu_2>0$ for $k=3$, $\mu_2=0$ for $k=4$ and
$\mu_2<0$ for $k=5,\,6,\ldots$.

The eigenvalue for $l=2$ and $k=4$ is zero in the first two leading orders.
To decide if it possibly contributes to an instability, further expansions are required. This computation is beyond the scope of this paper. We expect that the eigenvalue will be stable and the cluster with 4 spikes on a regular polygon is linearly stable.

By the consideration of the eigenvalues, the clusters with 2 spikes or 3 spikes on a polygon are both linearly stable.

For $k=4$ we have $\mu_0=2$, $\mu_1=\mu_3=0$
and $\mu_2=-2$.

Next we consider clusters with spikes located on a regular polygon plus a spike in its centre.

\section{Cluster of spikes on a polygon with centre}
In this section, we sketch how the approach can be adapted to show existence and stability of a cluster for which the spikes are located at the vertices of a regular $k$-polygon with centre.

The spike positions are
\begin{align}
\label{3.1a}
Q_{\e}=\left\{{\bf q}=(q_1,\cdots,q_k,0)\mid q_i=\left(2\tilde{R}_{\e}\cos\frac{2(i-1)\pi}{k},
2\tilde{R}_{\e}\sin\frac{2(i-1)\pi}{k}\right)\right\},
\end{align}
where $\tilde{R}_{\e}$ is chosen such that
\begin{align}
\label{3.2a}
\frac{1}{C}\frac{\sqrt{D}}{\e}\log\left(\frac{1}{D\log\frac{\sqrt{D}}{\e}}\right)\leq \tilde{R}_{\e}\leq C\frac{\sqrt{D}}{\e}\log\left(\frac{1}{D\log\frac{\sqrt{D}}{\e}}\right)
\end{align}
for some constant $C$ independent of $\e$ and $D$.

To get the radius for the equilibrium position,
we compute
\[
\tilde{W}_{\e,1,1}({\bf q})=c_1\xi\sigma \tilde{R}_\sigma^{-1/2}e^{-\tilde{R}_\sigma}
+c_2 \e^2 \tilde{R}_\e \partial^2 \mu(0)
+\mbox{h.o.t.}=0,
\]
where $\tilde{R}_\sigma=\sigma \tilde{R}_\e$.
We get
\begin{align*}
\tilde{R}_{\e,0}=\frac{1}{\s}\left(\log\frac{1}{D}
-\frac{3}{2}\log\log\frac{1}{D}
-\log\frac{\xi}{c_3}
+O\left(\frac{\log\log\frac{1}{D}}{\log\frac{1}{D}}
\right)
\right).
\end{align*}

Due to symmetry
we also have $\tilde{W}_{\e,1,2}({\bf q})=0$
and
$\tilde{W}_{\e,k+1,1}({\bf q})=
\tilde{W}_{\e,k+1,2}({\bf q})=
0$.
From this we get the existence of a steady state of spikes located at the $k$ vertices of a polygon and its centre, where $k$ can be any natural number.

Next we consider the stability of this spike cluster steady state. We assume that $k\leq 5$. We take the same rotated coordinates as above around the vertices of the polygon. For the origin located in the centre of the polygon we keep Cartesian coordinates $x_1$ and $x_2$.

The matrix $\mathcal{\tilde{M}}_\mu(\q)$ is now given as follows:
\[
\mathcal{\tilde{M}}_\mu(\q)=
\xi\sigma^{\frac32}e^{-\sigma|q_1|}\frac{1}{|q_1|^{\frac52}}
\left[\begin{matrix}
\tilde{M}_1&\tilde{M}_2\\\tilde{M}_3&\tilde{M}_4
\end{matrix}\right]+ h.o.t.
\]
\[
=\xi\sigma^{\frac32}e^{-\sigma|q_1|}\frac{1}{|q_1|^{\frac52}}
\mathcal{\tilde{M}}
+ h.o.t.,
\]
where
\begin{align*}
\tilde{M}_1=\left[\begin{matrix}
1&0&0&\cdots&0&-1\\[2mm]
0&1&0&\cdots&0&-\cos\frac{2\pi}{k}\\[2mm]
0&0&1&\cdots&0&-\cos\frac{4\pi}{k}\\[2mm]
\vdots&\vdots&\vdots&\ddots&\vdots&\vdots\\[2mm]
-1 & -\cos\frac{2\pi}{k}&-\cos\frac{4\pi}{k}&\cdots&-\cos\frac{2(k-1)\pi}{k}&\frac{k}{2}
\end{matrix}\right],
\end{align*}
\begin{align*}
\tilde{M}_2=\left[
\begin{matrix}
0&0&0&\cdots&0&0\\[2mm]
0&0&0&\cdots&0&-\sin\frac{2\pi}{k}\\[2mm]
0&0&0&\cdots&0&-\sin\frac{4\pi}{k}\\[2mm]
\vdots&\vdots&\vdots&\ddots&\vdots&\vdots\\[2mm]
0&0&0&\cdots&0&0
\end{matrix}
\right],
\end{align*}
\begin{align*}
\tilde{M}_3=\left[\begin{matrix}
0&0&0&\cdots&0&0\\
0&0&0&\cdots&0&0\\
0&0&0&\cdots&0&0\\
\vdots&\vdots&\vdots&\ddots&\vdots&\vdots\\
0&-\sin\frac{2\pi}{k}&-\sin\frac{4\pi}{k}&\cdots&-\sin\frac{2(k-1)\pi}{k}&0
\end{matrix}\right],
\end{align*}
\begin{align*}
\tilde{M}_4=\left[\begin{matrix}
0&0&0&\cdots&0&0\\
0&0&0&\cdots&0&0\\
0&0&0&\cdots&0&0\\
\vdots&\vdots&\vdots&\ddots&\vdots&\vdots\\
0&0&0&\cdots&0&\frac{k}{2}
\end{matrix}\right].
\end{align*}
We multiply $\mathcal{\tilde{M}}$ from the right by the vector $\mathbf{a}=(a_1,a_2,\ldots,a_k,a_{k+1},0,0,\dots,0,a_{2k+2})^T$, i.e. we assume that the components $k+2,\,k+3,\ldots, 2k+1$ are all zero, and we also multiply $\mathcal{\tilde{M}}$ from the left by the transpose of this vector. Further, we set $a_{k+1}=\alpha,\,a_{2k+2}=\beta$, where $\alpha$ and $\beta$ are some real numbers. 
Then we get
\[
\mathbf{a}^T \mathcal{\tilde{M}}\mathbf{a}=
\sum_{l=1}^{k}
(a_l-\alpha \cos\frac{2(l-1)\pi}{k}-\beta\sin\frac{2(l-1)\pi}{k})^2.
\]
This means the matrix $\mathcal{\tilde{M}}$ is positive semi-definite if it is restricted to the components
$1,2,\ldots,k,$ $k+1,2k+2$.
The eigenvalue of any eigenvector in this class is always nonnegative. It is zero if and only if
\[
a_l=\alpha \cos\frac{2(l-1)\pi}{k}+\beta\sin\frac{2(l-1)\pi}{k}
\quad\mbox{ for }l=1,2,\ldots,k,
\]
where $\alpha$ and $\beta$
are some real numbers
which are independent of $l$.
These eigenvectors  have positive eigenvalues for the second-order part of the matrix. Similar to the computation for a polygon without centre it can be shown that there are positive contributions to the eigenvalues coming from the components $k+1$ and $2k+2$ which are related to the spike at the centre. Note that these eigenvectors correspond to translations.

In addition we have to study the eigenvalue of any eigenvector orthogonal to this class, i.e. for which the components $1,2,\ldots,k,k+1,2k+2$ are zero and the components $k+2, \cdots, 2k+1$ are arbitrary.
The leading-order matrix $\mathcal{M}$ for the cluster of the polygon without centre defined in Section 6 is the second-order contribution here. It is positive semi-definite in this class (since $A_1$ is positive semi-definite). It is strictly positive definite except for the eigenvector
$(0,0,\ldots,0,1,1,\ldots,1)^T$ which has zero eigenvalue (since $A_1$ has zero eigenvector $(1,1,\ldots,1)^T$). Note that this eigenvector corresponds to rotations.
Here the components $k+2,\ldots,2k+1$ for the polygon with central spike become the components $k+1,\ldots, 2k$  of the vector for the polygon without centre since the components $k+1$ and $2k+2$ are dropped.

These computations show that the eigenvalues of $\mathcal{\tilde{M}}$ are nonnegative and they are zero only for eigenvectors which correspond to the rotational invariance of the problem.
Together we get the stability of the cluster with spikes located at the vertices of a regular polygon with $k\leq 5$ vertices plus one spike at its centre.

\section{Discussion}
We have shown the existence of spike clusters located near a nondegenerate minimum point of the precursor gradient for the Gierer-Meinhardt system such that the spikes are located on regular polygons. We have proved that these solutions are stable for two or three spikes and unstable for five or more spikes.

We have considered the problem in the rotationally symmetric case. We have assumed that the precursor and the domain are both rotationally symmetric.

It will be interesting to extend these results to the case that the precursor and the domain are not rotationally symmetric. We are currently studying these effects using the approach in \cite{dwy}, where the existence of spiky patterns for the Schr\"odinger equation has been extended from the case of a rotationally symmetric potential to the general case.

If $\mu$ is not rotationally symmetric generically there will be certain possible orientations of the spike cluster and we expect to have stable and unstable equilibrium orientations.

If the domain is not a disk higher order terms coming from the regular part of the Green's function generically will determine the orientation of the spike cluster and we expect to have stable and unstable equilibrium orientations.

Because of the smallness of the inhibitor diffusivity we expect that generically the influence of $\mu$ will dominate that of the domain boundary.

 Further analysis is needed to resolve the stability issue for a 4-spike cluster (regular polygon with 4 vertices).
 Further calculations are required to show that the $(k+1)$-spike cluster for $k\geq 6$ (regular polygon with $k$ vertices plus a spike in the centre) is stable or unstable. The stability problem in this case requires some new analysis since the interaction between spikes is of a different type from the one considered in this paper. These issues are currently under investigation.

Whereas in one space dimension the spikes in a cluster are aligned with equal distance in leading order (although they differ in higher order) \cite{ww5},
in two space dimensions further spike configurations seem to be possible such as
concentric multiple polygons or positions close to regular polygons. Similar configurations have been studied in \cite{dkw}.

Biologically speaking, the precursor is the information retained from a previous stage of development and the patterns discovered in the reaction-diffusion system at the present will be able to determine the development in the future. The Gierer-Meinhardt system with precursor can be considered a minimal model to describe this behaviour. Generally one has to study larger systems which take into account other effects to make more reliable biological predictions. Therefore it will be interesting to study reaction-diffusion systems of three and more components and investigate the role which spike clusters play in such systems. One such system is a consumer chain model for which existence and stability of a clustered spiky patterns has been investigated by the first two authors \cite{ww55}. However, a more systematic approach will be needed to gain a better understanding of the role played by spike clusters in guiding biological development.

\vspace{1cm}
\section{Appendix A: Some auxiliary results for Liapunov-Schmidt reduction in section 3}

In this appendix we will prove some results which are needed for Liapunov-Schmidt reduction in section 3. In particular, will show that

We are going to show that the equation
\begin{align*}
\pi_{\e,\q}\circ S_{\e,\q}\left(\begin{matrix}
A_{\e,\q}+\phi_{\e,\q}\\
H_{\e,\q}+\psi_{\e,\q}
\end{matrix}\right)=0
\end{align*}
has a unique solution $\Sigma_{\e,\q}=\left(\begin{matrix}
\phi_{\e,\q}\\
\psi_{\e,\q}
\end{matrix}\right)\in\mathcal{K}_{\e,\q}^{\perp}$ if $\max\left(\frac{\e}{\sqrt{D}},D\log\frac{\sqrt{D}}{\e}\right)$ is small enough.

Recall that
\begin{align*}
*\label{3.19}
\mathcal{L}_{\e,\q}=\pi_{\e,\q}\circ\tilde{L}_{\e,\q}:\mathcal{K}_{\e,\q}^{\perp}\rightarrow\mathcal{C}_{\e,\q}^{\perp}.
\end{align*}

We will first show that this linear operator is uniformly invertible. Then we will use this to prove the existence of $\Sigma_{\e,\q}=\left(\begin{matrix}
\phi_{\e,\q}\\
\psi_{\e,\q}
\end{matrix}
\right).$

As a preparation, in the following two propositions we show the invertibility of {the} corresponding linearised operator $\mathcal{L}_{\e,\q}.$

\begin{proposition}
\label{pr3.1}
Let $\mathcal{L}_{\e,\q}$ be given in (\ref{3.19}). There exists a positive constant $\bar\delta$ such that for all
$\max\left(\frac{\e}{\sqrt{D}},D\log\frac{\sqrt{D}}{\e}\right)\in(0,\bar\delta)$, we can find a positive constant $C$ which is independent of $\e,D$ such that
\begin{equation}
\label{3.20}
\|\mathcal{L}_{\e,\q}\Sigma\|_{L^2(B_{R/\varepsilon})}
\geq C\|\Sigma\|_{H^2(B_{R/\varepsilon})}
\end{equation}
for arbitrary ${\q}\in Q_{\e},\Sigma\in\mathcal{K}_{\e,\q}^{\perp}.$ Further, the map $\mathcal{L}_{\e,\q}$ is surjective.
\end{proposition}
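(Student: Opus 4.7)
The plan is to prove Proposition \ref{pr3.1} by a standard contradiction/compactness argument, reducing the equation $\mathcal{L}_{\e,\q}\Sigma = 0$ in the limit to the nonlocal system $L\Phi = 0$ studied in Lemma \ref{le2.3}, and then invoking Fredholm's alternative for surjectivity. Suppose, contrary to the claim, that there exist sequences $\e_n \to 0$ (with $D_n$ chosen so that $\max(\e_n/\sqrt{D_n}, D_n\log(\sqrt{D_n}/\e_n)) \to 0$), $\q_n \in Q_{\e_n}$, and $\Sigma_n = (\phi_n,\psi_n)^T \in \mathcal{K}_{\e_n,\q_n}^\perp$ with $\|\Sigma_n\|_{H^2 \times H^2} = 1$ such that $\mathcal{L}_{\e_n,\q_n}\Sigma_n \to 0$ in $L^2 \times L^2$.

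First I would use the second equation to eliminate $\psi_n$: solving $\Delta\psi_n - \sigma^2\psi_n + 2\xi A_{\e,\q}\phi_n = g_n$ (with $\|g_n\|_{L^2}\to 0$) gives the Green's representation $\psi_n(x) = 2\xi\int_{B_{R/\e}} G_\sigma(x,z) A_{\e,\q}(z)\phi_n(z)\,dz + \text{h.o.t.}$ Using the cutoff $\chi_{\e,q_j}$, write $\phi_{n,j}(x) := \phi_n(x)\chi_{\e_n,q_j}(x)$ and pass, along a subsequence, to weak limits $\phi_{n,j}(\cdot + q_j) \rightharpoonup \phi_j$ in $H^2(\B)$. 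Evaluating $\psi_n$ at $x = q_i + z$ with $|z|$ bounded, using the asymptotics \eqref{2.13}--\eqref{2.14} of $G_\sigma$, the definition \eqref{3.4} of $\xi$, and the exponential decay of $w$ between distinct spike locations, one finds
\begin{equation*}
\psi_n(q_i + z) = 2\xi \cdot \frac{1}{2\pi}\log\frac{1}{\sigma}\sum_{j=1}^k \int_{\B} w\phi_{n,j} + o(1) = \frac{2\sum_{j=1}^k \int w\phi_j}{\int w^2} + o(1).
\end{equation*}
Since the approximate solution is localized at each $q_j$ and the spikes do not interact to leading order (the mutual interaction is $O(e^{-R_\sigma})$), the cross-terms $\int w\phi_j$ for $j\neq i$ contribute only to higher-order corrections in the equation for $\phi_i$; in fact the diagonal structure of the limiting operator $L$ in \eqref{2.6} arises precisely because the local contribution near $q_i$ dominates.

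Passing to the limit in the first equation of $\mathcal{L}_{\e,\q}\Sigma_n \to 0$, using $\mu(\e q_j) \to \mu(0) = 1$, $\nabla\mu(\e q_j) \to 0$, and $H_{\e,\q}(q_j) \to 1$, each $\phi_j$ satisfies
\begin{equation*}
\Delta\phi_j - \phi_j + 2w\phi_j - 2\frac{\int_{\B} w\phi_j}{\int_{\B} w^2}w^2 = 0,
\end{equation*}
i.e.\ $L\Phi = 0$ with $\Phi = (\phi_1,\ldots,\phi_k)^T$. The orthogonality $\Sigma_n \in \mathcal{K}_{\e_n,\q_n}^\perp$ passes to the limit to give $\phi_j \perp K_0$ for every $j$. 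Lemma \ref{le2.3} then forces $\phi_j = 0$ for all $j$, so $\phi_n \to 0$ in $L^2_{\mathrm{loc}}$ around each spike. Combined with the exponential decay of $\phi_n$ away from the $q_j$'s (a standard barrier/comparison argument for $\Delta\phi_n - \phi_n + \text{small} = o(1)$) this yields $\phi_n \to 0$ in $H^2(B_{R/\e})$, and then the Green's representation for $\psi_n$ gives $\psi_n \to 0$ in $H^2$ as well. This contradicts $\|\Sigma_n\|_{H^2\times H^2}=1$ and establishes the estimate \eqref{3.20}.

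The main obstacle is the careful bookkeeping of the logarithmic factor $\log(1/\sigma)$ coming from $G_\sigma$ in two dimensions and the choice of $\xi$ that cancels it to produce an $O(1)$ nonlocal term in the limit, together with verifying that the off-diagonal interaction terms really are negligible at the required order (using $R_\sigma \to \infty$). Finally, for surjectivity, observe that $\mathcal{L}_{\e,\q}$ is a compact perturbation of an isomorphism (the map sending $(\phi,\psi)$ to $(\Delta\phi - \phi, \Delta\psi - \sigma^2\psi)$ with appropriate projections), so it is Fredholm of index zero; since the coercivity estimate \eqref{3.20} gives injectivity, surjectivity follows from the Fredholm alternative.
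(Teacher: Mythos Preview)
Your approach is essentially the same as the paper's: a contradiction argument that localizes $\phi_n$ near each spike via the cutoffs $\chi_{\e,q_j}$, passes to the limit to obtain the diagonal nonlocal system $L\Phi=0$, and then invokes Lemma~\ref{le2.2}/\ref{le2.3} together with the inherited orthogonality to force $\Phi=0$. Two small remarks. First, your displayed formula for $\psi_n(q_i+z)$ should contain only the $j=i$ term in leading order: for $j\neq i$ the kernel $G_\sigma(q_i+z,\,q_j+\cdot)$ is of size $O(R_\sigma^{-1/2}e^{-R_\sigma})$, not $O(\log(1/\sigma))$, so those contributions are already $o(1)$ before multiplication by $\xi$; you acknowledge this in words immediately afterwards, so the slip is harmless and the limiting diagonal equation you write is correct. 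Second, the paper handles the far-field part more explicitly by setting $\phi_{n,k+1}=\phi^n-\sum_{j=1}^k\phi_{n,j}$ and observing that its limit solves $\Delta\phi-\phi=0$ on the expanding ball, hence vanishes; this is a bit cleaner than the barrier/comparison sketch you give, though the two amount to the same thing. For surjectivity the paper argues by injectivity of the adjoint $\mathcal{L}_{\e,\q}^*$ (proved by the identical compactness argument), which is equivalent to your Fredholm index-zero route.
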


\noindent{\em Proof of Proposition \ref{pr3.1}.} Suppose (\ref{3.20}) is false. Then there exist sequences $\{\e_n\},\{D_n\}$, $\{\q^n\},$ $\{\phi^n\},\{\psi^n\}$ such that $\max\left(\frac{\e_n}{\sqrt{D_n}},D_n\log\frac{\sqrt{D_n}}{\e_n}\right)\rightarrow0$, $\q^n\in Q_{\e}$, $\phi^n=\phi_{\e_n,D_n,\q^n},n=1,2,\cdots$ and
\begin{equation}
\label{a.1}
\|\mathcal{L}_{\e_n,\q^n}\Sigma_n\|_{L^2(B_{R/\varepsilon})}\rightarrow 0~\mathrm{as}~n\rightarrow\infty,
\end{equation}
\begin{equation}
\label{a.2}
\|\phi^n\|_{H^2(B_{R/\varepsilon})}+\|\psi^n\|_{H^2(B_{R/\varepsilon})}=1,~n=1,2,\cdots.
\end{equation}
On the other hand, we note $\psi^n$ satisfies
\begin{align*}
\Delta\psi^n-\sigma^2\psi^n+2A_{\e_n,\q^n}\phi^n=0.
\end{align*}
It is easy to see from the above equation we get $\|\psi^n\|_{H^2(B_{R/\varepsilon})}\leq C\|\phi^n\|_{H^2(B_{R/\varepsilon})}$. Then we can assume $\|\phi^n\|_{H^2(B_{R/\varepsilon})}=1$. We define $\phi_{n,i},i=1,2,\cdots,k$ and $\phi_{n,k+1}$ as follows:
\begin{equation}
\label{a.3}
\phi_{n,i}(x)=\phi^n(x)\chi(\frac{x-q_i}{R_{\e}\sin\frac{\pi}{k}})~\mathrm{and}~
\phi_{n,k+1}(x)=\phi^n(x)-\sum_{i=1}^k\phi_{n,i}(x),
\end{equation}
where $\chi(x)$ is defined in \eqref{5.test}. Since for $i=1,2,\cdots,k$ each sequence $\{\phi_{n,i}\},~n=1,2,\cdots$ is bounded in $H^2_N(\B)$ it has a weak limit in $H^2_N(\B)$, and therefore also a strong limit in $L^2(\B)$ and $L^\infty(\B)$. Call these limit $\phi_i.$ Then $\Phi=(\phi_1,\cdots,\phi_k)^T$ solves the system $L\Phi=0$, where $L$ is given in (\ref{2.6}). By Lemma \ref{le2.3}, $\Phi\in Ker(L)=K_0\oplus\cdots\oplus K_0$. Since $\phi^n\perp K_{\e_n,\q_n}$, by taking $n\rightarrow\infty$ we get $\Phi\in Ker(L)^{\perp}$. Therefore, $\Phi=0.$

By elliptic estimates we have $\|\phi_{n,i}\|_{H^2(B_{R/\varepsilon})}\rightarrow0$ as $n\rightarrow\infty$ for $i=1,2,\cdots,k.$ Further, since $\phi_{n,k+1}\rightarrow\phi_{k+1}$ we get
\begin{equation*}
\Delta\phi_{k+1}-\phi_{k+1}=0~\mathrm{in}~\BB.
\end{equation*}
Therefore we conclude $\phi_{k+1}=0$ and $\|\phi_{n,k+1}\|_{H^2(B_{R/\varepsilon})}\rightarrow0$ as $n\rightarrow\infty$. This contradicts to $\|\phi^n\|_{H^2(B_{R/\varepsilon})}=1$.

To complete the proof of Proposition \ref{pr3.1} we just need to show that conjugate operator to $\mathcal{L}_{\e,\q}$ (denoted by $\mathcal{L}_{\e,\q}^*$) is injective from $\mathcal{K}_{\e,\q}^{\perp}$ to $\mathcal{C}_{\e,\q}^{\perp}$ and the proof for $\mathcal{L}_{\e,\q}^*$ follows almost the same process as for $\mathcal{L}_{\e,\q}$ and we omit it. Thus we finish the proof of Proposition \ref{pr3.1}. $\quad\quad\quad\square$

\medskip

Now we are in position to solve the equation
\begin{equation}
\label{3.21}
\pi_{\e,\q}\circ S_{\e,\q}\left(\begin{matrix}
A_{\e,\q}+\phi\\
H_{\e,\q}+\psi\end{matrix}\right)=0.
\end{equation}
Since $\mathcal{L}_{\e,\q}\mid_{\mathcal{K}_{\e,\q}^{\perp}}$ is invertible (call the inverse $\mathcal{L}_{\e,\q}^{-1}$), we can rewrite (\ref{3.21}) as
\begin{equation}
\label{3.22}
\Sigma=-(\mathcal{L}_{\e,\q}^{-1}\circ \pi_{\e,\q})
\left(S_{\e,\q}\left(\begin{matrix}
A_{\e,\q}\\
H_{\e,\q}\end{matrix}\right)\right)-(\mathcal{L}_{\e,\q}^{-1}\circ \pi_{\e,\q})(N_{\e,\q}(\Sigma))\equiv M_{\e,\q}(\Sigma),
\end{equation}
where
\begin{align*}
\Sigma=\left(\begin{matrix}\phi\\ \psi\end{matrix}\right),~
N_{\e,\q}(\Sigma)=S_{\e,\q}\left(\begin{matrix}A_{\e,\q}+\phi\\H_{\e,\q}+\psi\end{matrix}\right)
-S_{\e,\q}\left(\begin{matrix}A_{\e,\q}\\H_{\e,\q}\end{matrix}\right)
-S_{\e,\q}'\left(\begin{matrix}A_{\e,\q}\\H_{\e,\q}\end{matrix}\right)
\left[\begin{matrix}\phi\\ \psi\end{matrix}\right],
\end{align*}
and the operator $M_{\e,\q}$ is defined by (\ref{3.22}) for $\Sigma\in H^2_N(B_{R/\varepsilon})\times H^2_N(B_{R/\varepsilon})$. We are going to show the operator $M_{\e,\q}$ is a contraction map in
\begin{align*}
B_{\e,D}=\left\{\Sigma\in H^2_N(B_{R/\varepsilon})\times H^2_N(B_{R/\varepsilon})\mid \|\Sigma\|_{H^2(B_{R/\varepsilon})}<\eta\right\}
\end{align*}
provided $\max\left(\frac{\e}{\sqrt{D}},D\log\frac{\sqrt{D}}{\e}\right)$ is small enough. We have by Lemma \ref{le3.1} and Proposition \ref{pr3.1} that
\begin{align*}
\|M_{\e,\q}(\Sigma)\|_{H^2)N(\BB)}\leq&~
C\Big(\|\pi_{\e,\q}\circ N_{\e,\q}(\Sigma)\|_{L^2(B_{R/\varepsilon})}+\left\|\pi_{\e,\q}\circ S_{\e,\q}\left(\begin{matrix}A_{\e,\q}\\H_{\e,\q}\end{matrix}\right)\right\|_{L^2(B_{R/\varepsilon})}
\Big)\\
\leq&~ C(c(\eta)\eta+c_{\e,D}),
\end{align*}
where $C>0$ is independent of $\eta$, $c(\eta)\rightarrow0$ as $\eta\rightarrow0$ and $c_{\e,D}\rightarrow0$ as $$\max\left(\frac{\e}{\sqrt{D}},D\log\frac{\sqrt{D}}{\e}\right)\rightarrow0.$$
Similarly we can show
\begin{align*}
\left\|M_{\e,\q}(\Sigma)-M_{\e,\q}(\Sigma')\right\|_{H^2(B_{R/\varepsilon})}\leq Cc(\eta)\|\Sigma-\Sigma'\|_{H^2(B_{R/\varepsilon})}.
\end{align*}
If we choose $\eta$ sufficient small, then $M_{\e,\q}$ is a contraction map on $B_{\e,D}$. The existence of the fixed point $\Sigma_{\e,\q}$ together with an error estimate now follow from the contraction mapping principle. Moreover $\Sigma_{\e,\q}$ is a solution of (\ref{3.22}).
\medskip

{Thus we have proved}
\begin{lemma}
\label{le3.2}
There exists $\bar\delta>0$ such that for $$\max\left(\frac{\e}{\sqrt{D}},D\log\frac{\sqrt{D}}{\e}\right)\in(0,\bar\delta)$$
and $\q\in Q_{\e},$ {we can find a unique} $(\Phi_{\e,\q},\Psi_{\e,\q})\in\mathcal{K}_{\e,\q}^{\perp}$ satisfying
$$S_{\e,\q}\left(\begin{matrix}A_{\e,\q}+\Phi_{\e,\q}\\H_{\e,\q}+\Psi_{\e,\q}\end{matrix}\right)\in \mathcal{C}_{\e,\q}$$ and
\begin{equation}
\label{3.23}
\|(\Phi_{\e,\q},\Psi_{\e,\q})\|_{H^2(B_{R/\varepsilon})}\leq C\left(\frac{1}{\log\frac{\sqrt{D}}{\e}}+\e^2 R_{\e}^2\right).
\end{equation}
\end{lemma}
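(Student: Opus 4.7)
The plan is to establish Lemma \ref{le3.2} in three stages: first prove uniform invertibility of the projected linearised operator $\mathcal{L}_{\e,\q}$ on $\mathcal{K}_{\e,\q}^\perp$, then recast the projected equation as a fixed-point problem $\Sigma = M_{\e,\q}(\Sigma)$, and finally apply the contraction mapping principle in a small ball of $H^2_N(\BB)\times H^2_N(\BB)$, reading off the error bound from the size of the residual $S_{\e,\q}(A_{\e,\q},H_{\e,\q})$.

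First I would prove the uniform estimate $\|\mathcal{L}_{\e,\q}\Sigma\|_{L^2}\ge C\|\Sigma\|_{H^2}$ (Proposition \ref{pr3.1}) by contradiction. Assume there exist sequences with $\max(\e_n/\sqrt{D_n},D_n\log(\sqrt{D_n}/\e_n))\to 0$, $\q^n\in Q_\e$ and $\Sigma_n=(\phi^n,\psi^n)\in\mathcal{K}_{\e_n,\q^n}^\perp$ of unit $H^2$-norm with $\|\mathcal{L}_{\e_n,\q^n}\Sigma_n\|_{L^2}\to 0$. The second equation $\Delta\psi^n-\sigma^2\psi^n+2\xi A_{\e_n,\q^n}\phi^n=o(1)$ gives $\|\psi^n\|_{H^2}\le C\|\phi^n\|_{H^2}$, so I may normalise $\|\phi^n\|_{H^2}=1$. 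Cut off $\phi^n$ near each spike by $\phi_{n,i}(x)=\phi^n(x)\chi_{\e,q_i^n}(x)$ for $i=1,\dots,k$ and set $\phi_{n,k+1}=\phi^n-\sum_i\phi_{n,i}$. Expanding $\psi^n$ via the Green's function $G_\sigma$ as in (\ref{5.4}) shows that near each $q_i^n$ the inhibitor perturbation converges to the constant $\frac{1}{2\pi}\xi\log\frac{1}{\sigma}\int_\B w\phi_i$, so the weak $H^2$-limits $\phi_i$ satisfy the nonlocal system $L\Phi=0$ of (\ref{2.6}). By Lemma \ref{le2.3}, $\Phi\in K_0\oplus\cdots\oplus K_0$, and passing the orthogonality $\phi^n\perp K_{\e_n,\q^n}$ to the limit forces $\Phi=0$. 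Standard elliptic estimates then give $\|\phi_{n,i}\|_{H^2}\to 0$, while $\phi_{n,k+1}$ solves $\Delta\phi_{k+1}-\phi_{k+1}=0$ in the limit and therefore also vanishes, contradicting $\|\phi^n\|_{H^2}=1$. The same argument applied to $\mathcal{L}_{\e,\q}^*$ gives surjectivity via the Fredholm alternative.

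Next I would set up the fixed-point problem. Since $\mathcal{L}_{\e,\q}$ is uniformly invertible on $\mathcal{K}_{\e,\q}^\perp$, the projected equation
\begin{equation*}
\pi_{\e,\q}\circ S_{\e,\q}\!\begin{pmatrix}A_{\e,\q}+\phi\\ H_{\e,\q}+\psi\end{pmatrix}=0
\end{equation*}
is equivalent to
\begin{equation*}
\Sigma=-\mathcal{L}_{\e,\q}^{-1}\pi_{\e,\q}S_{\e,\q}\!\begin{pmatrix}A_{\e,\q}\\ H_{\e,\q}\end{pmatrix}-\mathcal{L}_{\e,\q}^{-1}\pi_{\e,\q}N_{\e,\q}(\Sigma)=:M_{\e,\q}(\Sigma),
\end{equation*}
where $N_{\e,\q}$ is the quadratic remainder obtained by subtracting the linear part from $S_{\e,\q}$. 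From Lemma \ref{le3.1}, the residual $S_1(A_{\e,\q},H_{\e,\q})$ splits into $S_{1,1}+S_{1,2}$, and a direct $L^2$-bound gives
\begin{equation*}
\left\|\pi_{\e,\q}S_{\e,\q}(A_{\e,\q},H_{\e,\q})\right\|_{L^2(\BB)}\le C\!\left(\frac{1}{\log\frac{\sqrt{D}}{\e}}+\e^2R_\e^2\right),
\end{equation*}
since the leading-order terms $\sigma\nabla_q F(\q)$ are $O(1/\log(\sqrt{D}/\e))$, and the radially symmetric part $S_{1,2}$ is of order $\e^2R_\e^2$. For the nonlinear part, Taylor expansion of the fraction $A^2/H$ and the smoothness of the scalar operations give $\|N_{\e,\q}(\Sigma)-N_{\e,\q}(\Sigma')\|_{L^2}\le Cc(\eta)\|\Sigma-\Sigma'\|_{H^2}$ with $c(\eta)\to 0$ as $\eta\to 0$, for $\Sigma,\Sigma'\in B_{\e,D}=\{\|\Sigma\|_{H^2}<\eta\}$.

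Combining these bounds with Proposition \ref{pr3.1} shows that $M_{\e,\q}$ maps $B_{\e,D}$ into itself and is a strict contraction as soon as $\eta$ and $\max(\e/\sqrt{D},D\log(\sqrt{D}/\e))$ are sufficiently small. The Banach fixed-point theorem then yields a unique $\Sigma_{\e,\q}=(\Phi_{\e,\q},\Psi_{\e,\q})\in\mathcal{K}_{\e,\q}^\perp$, and the a priori estimate (\ref{3.23}) follows at once from $\|\Sigma_{\e,\q}\|_{H^2}\le 2\|\mathcal{L}_{\e,\q}^{-1}\pi_{\e,\q}S_{\e,\q}(A_{\e,\q},H_{\e,\q})\|_{H^2}$. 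The main obstacle is the contradiction step in the invertibility proof: one must show that the approximate kernel generated by the translation modes $\partial_{q_{j,l}}A_{\e,\q}$ exhausts, in the limit, the kernel of the nonlocal system $L$, which requires the careful Green's function expansion of $\psi^n$ together with Lemma \ref{le2.3} to identify the limiting nonlocal term.
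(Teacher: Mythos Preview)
Your proposal is correct and follows essentially the same approach as the paper's proof in Appendix A: the contradiction argument for uniform invertibility of $\mathcal{L}_{\e,\q}$ matches Proposition \ref{pr3.1} step by step, and the subsequent contraction-mapping argument with the quadratic remainder $N_{\e,\q}$ is identical. If anything, you are slightly more explicit than the paper in explaining how the nonlocal term in the limit system $L\Phi=0$ arises from the Green's function representation of $\psi^n$ and in reading off the residual size from Lemma \ref{le3.1}.
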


In the following, we need more refined estimates on $\Phi_{\e,\q}$. We recall that $S_1$ can be decomposed into the two parts $S_{1,1}$ and $S_{1,2}$, where $S_{1,1}$ is in leading order an odd function and $S_{1,2}$ is in leading order a radially symmetric function. Similarly, we can decompose $\Phi_{\e,\q}$:

\begin{lemma}
\label{le3.3}
Let $\Phi_{\e,\q}$ be defined in Lemma \ref{le3.2}. Then for $x=q_i+z,~|\s z|<\delta,$ we have
\begin{align}
\label{3.24}
\Phi_{\e,\q}=\Phi_{\e,\q,1}+\Phi_{\e,\q,2},
\end{align}
where $\Phi_{\e,\q,2}$ is a radially symmetric function in $z$ and
\begin{equation}
\label{3.25}
\|\Phi_{\e,\q,1}\|_{H^2(B_{R/\varepsilon})}=O\left(\e^2 R_{\e}+\s(\log\frac{1}{\s})^{-1}R_{\s}^{-\frac12}e^{-R_{\s}}+\s^2(\log\frac{1}{\s})^{-1}\right).
\end{equation}
\end{lemma}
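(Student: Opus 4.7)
The plan is to refine the contraction-mapping argument from the proof of Lemma \ref{le3.2} by exploiting the symmetry structure of the error term exhibited in Lemma \ref{le3.1}. Specifically, I would introduce a splitting of $H^2_N(B_{R/\e})$ into two subspaces adapted to each vertex: a ``radial'' subspace $X^{\mathrm{rad}}$ of functions that (locally in $z=x-q_j$ on $\mathrm{supp}(\chi_{\e,q_j})$) are radially symmetric, and a complementary non-radial subspace $X^{\mathrm{nr}}$. By Lemma \ref{le3.1}, $S_{1,1}$ lies (up to higher-order remainders) in $X^{\mathrm{nr}}$ while $S_{1,2}$ lies in $X^{\mathrm{rad}}$. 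Reading off the size of $S_{1,1}$ from (\ref{3.13}), the factor $\xi\sim(\log\tfrac{1}{\s})^{-1}$ multiplied by $\s|\nabla_{\q}F|\sim R_{\s}^{-1/2}e^{-R_{\s}}$ produces the first error term in the statement, the quadratic term $\s^2 z_l z_m\partial^2_{q_{j,l}q_{j,m}}F$ gives $\s^2(\log\tfrac{1}{\s})^{-1}$, and the precursor piece $\e z\cdot(\e q_j)\mu''(0)\sum_i w$ gives $\e^2 R_\e$; hence
\[
\|S_{1,1}\|_{L^2(B_{R/\e})}=O\!\left(\e^2 R_{\e}+\s(\log\tfrac{1}{\s})^{-1}R_{\s}^{-\frac12}e^{-R_{\s}}+\s^2(\log\tfrac{1}{\s})^{-1}\right).
\]

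Next I would verify that both $\mathcal{L}_{\e,\q}$ and $\pi_{\e,\q}$ approximately preserve this splitting. The leading part of $\mathcal{L}_{\e,\q}$ consists of $L_0=\Delta-1+2w$ around each vertex plus a nonlocal Green-function coupling, and both commute with the reflection $z\mapsto -z$ in each vertex frame, so they map $X^{\mathrm{rad}}$ into itself and $X^{\mathrm{nr}}$ into itself. The approximate kernel $K_{\e,\q}=\mathrm{span}\{\partial_{q_{j,l}}A_{\e,\q}\}$ consists entirely of non-radial functions (derivatives of a radially-built approximation in a coordinate direction), so $\pi_{\e,\q}$ respects the splitting as well. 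Decomposing $\Sigma_{\e,\q}=\Sigma^{\mathrm{nr}}+\Sigma^{\mathrm{rad}}$ and projecting the fixed-point equation (\ref{3.22}), Proposition \ref{pr3.1} yields
\[
\|\Sigma^{\mathrm{nr}}\|_{H^2}\le C\|S_{1,1}\|_{L^2}+o(\|\Sigma_{\e,\q}\|_{H^2}),
\]
and identifying $\Phi_{\e,\q,1}$ with the first component of $\Sigma^{\mathrm{nr}}$ and $\Phi_{\e,\q,2}$ with that of $\Sigma^{\mathrm{rad}}$ gives the stated bound and radial symmetry.

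The hard part is checking that the cross-coupling between $X^{\mathrm{rad}}$ and $X^{\mathrm{nr}}$ introduced by the small non-symmetric pieces of $\mathcal{L}_{\e,\q}$ -- namely the precursor perturbation $\mu(\e x)-1$, the asymmetric corrections of $G_\s(x,z)$ evaluated away from the spike centre, and the breaking of full $O(2)$-invariance caused by the fact that each vertex $q_j$ has a distinguished radial direction pointing toward the origin -- produces only errors of higher order than the advertised bound. A careful accounting shows that these perturbations contribute at most at the level $\e^2 R_\e+\s^2(\log\tfrac{1}{\s})^{-1}$, which matches the size of the source $S_{1,1}$, and can therefore be absorbed into the nonlinear remainder when running the contraction in the product space $X^{\mathrm{nr}}\times X^{\mathrm{rad}}$. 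Uniqueness of the fixed point in Lemma \ref{le3.2} then singles out the decomposition $\Phi_{\e,\q}=\Phi_{\e,\q,1}+\Phi_{\e,\q,2}$ and completes the proof.
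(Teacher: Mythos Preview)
Your strategy is correct and rests on the same two pillars as the paper: the splitting $S_1=S_{1,1}+S_{1,2}$ from Lemma~\ref{le3.1} with the stated size of $S_{1,1}$, and the fact that the linearised operator together with the projection $\pi_{\e,\q}$ preserves (locally in each vertex frame) radial symmetry. Your derivation of $\|S_{1,1}\|_{L^2}$ from (\ref{3.13}) is exactly what the paper uses.

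The execution, however, is organised differently. You work \emph{simultaneously}: split $H^2_N$ into $X^{\mathrm{rad}}\oplus X^{\mathrm{nr}}$, project the full fixed-point equation, and then argue that the off-diagonal pieces of $\mathcal L_{\e,\q}$ and of the nonlinearity feed back into $X^{\mathrm{nr}}$ only at the level of $S_{1,1}$. As you note, this forces you to control a cross term of the schematic form ``(small asymmetric perturbation of the operator)$\times$(large radial component $\Phi_{\e,\q,2}$)'', and you have to check that this product lands at or below the advertised bound.

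The paper bypasses that bookkeeping by solving \emph{sequentially}: first run the contraction of Lemma~\ref{le3.2} with the radial source $\sum_j S_{1,2}$ alone to obtain $\Phi_{\e,\q,2}$ (radial by uniqueness, since the operator and the source are radial near each vertex); then run a second contraction about the improved base $A_{\e,\q}+\Phi_{\e,\q,2}$ with the remaining source $\sum_j S_{1,1}$ to obtain $\Phi_{\e,\q,1}$. In this second step the source is already small, so Proposition~\ref{pr3.1} gives (\ref{3.25}) directly, and uniqueness forces $\Phi_{\e,\q}=\Phi_{\e,\q,1}+\Phi_{\e,\q,2}$. The trade-off is that the sequential route is shorter and avoids the cross-coupling estimate entirely, while your simultaneous route makes the decomposition of the function space explicit but requires that extra check.
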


\begin{proof}
Let $S[u]:=S_1(u,T[u]).$ We first solve
\begin{equation}
\label{3.26}
S\big[A_{\e,\q}+\Phi_{\e,\q,2}\big]-S\big[A_{\e,\q}\big]+\sum_{j=1}^kS_{1,2}(x-q_j)\in C_{\e,\q},
\end{equation}
for $\Phi_{\e,\q,2}\in K_{\e,\q}^{\perp}$.
Then we solve
\begin{align}
\label{3.27}
S\big[A_{\e,\q}+\Phi_{\e,\q,2}+\Phi_{\e,\q,1}\big]-S\big[A_{\e,\q}+\Phi_{\e,\q,2}\big]+\sum_{j=1}^kS_{1,1}(x-q_j)\in C_{\e,\q},
\end{align}
for $\Phi_{\e,\q,1}\in K^{\perp}_{\e,\q}.$ Using the same proof as in Lemma \ref{le3.2}, both equations (\ref{3.26}) and (\ref{3.27}) have unique solution for $\max\left(\frac{\sqrt{D}}{\e},D\log\frac{\sqrt{D}}{\e}\right)$ sufficiently small. By uniqueness, $\Phi_{\e,\q}=\Phi_{\e,\q,1}+\Phi_{\e,\q,2}$. It is easy to see that
$$\|S_{1,1}\|_{L^2(B_{R/\varepsilon})}=O\left(\e^2 R_{\e}+\s(\log\frac{1}{\s})^{-1}R_{\s}^{-\frac12}e^{-R_{\s}}+\s^2(\log\frac{1}{\s})^{-1}\right)$$
and $S_{1,2}\in C_{\e,\q}^{\perp}$. Then we can conclude that $\Phi_{\e,\q,1}$ and $\Phi_{\e,\q,2}$ have the required properties.
\end{proof}

\vspace{1cm}
\section{Appendix B: Stability Analysis II: Study of small eigenvalues}
In this section, we shall study the small eigenvalue for equation (\ref{5.1}). Namely, we assume $\lambda_{\e}\rightarrow0$ as $\max\left(\frac{\e}{\sqrt{D}},D\log\frac{\sqrt{D}}{\e}\right)\rightarrow0$. We shall show that the small eigenvalues are related to $\mu''(0)$ and the Green function.

Again let $(A_{\e},H_{\e})$ be the equilibrium state constructed for equation \eqref{1.2}. Let
$$A_{\e,j}=\chi_{\e,q_j}(x)A_{\e}(x),~j=1,2,\cdots,k,$$
where $\chi_{\e,q_j}$ is defined before \eqref{5.test}. Then it is easy to see that
\begin{equation}
\label{6.1}
A_{\e}=\sum_{j=1}^kA_{\e,j}+h.o.t.~\mathrm{in}~H^2_N(B_{R/\varepsilon}).
\end{equation}

In last section, we have derived the nonlocal eigenvalue (\ref{5.5}). Let us now set $\lambda_0=0$ in \eqref{5.5}, we have that
\begin{equation}
\label{6.2}
\Delta\phi_i-\phi_i+2w\phi_i-2\frac{\int_{\B}w\phi_i}{\int_{\B}w^2}w^2=0,
\end{equation}
which is equivalent to
\begin{align*}
L_0\left(\phi_i-2\frac{\int_{\B}w\phi_i}{\int_{\B}w^2}w\right)=0,~i=1,\cdots,k,
\end{align*}
where $L_0$ is defined in (\ref{2.7}). By Lemma \ref{le2.1}, we have
\begin{align}
\label{6.3}
\phi_i-2\frac{\int_{\B}w\phi_i}{\int_{\B}w^2}w\in \mathrm{Span}\left\{\frac{\partial w}{\partial x_j},~j=1,2\right\},~i=1,2,\cdots,k.
\end{align}
Multiplying \eqref{6.3} by $w$ and integrating over $\B$ and summing up, we have $\int_{\B}w\phi_i=0,$ and hence
\begin{align}
\label{6.4}
\phi_j\in K_0=\mathrm{Span}\left\{\frac{\partial w}{\partial x_j},~j=1,2\right\},~i=1,2,\cdots,k.
\end{align}
(\ref{6.4}) suggests that, at least formally, we should have
\begin{align}
\label{6.5}
\phi_{\e}\sim\sum_{j=1}^k\sum_{i=1}^2a^{\e}_{j,i}\frac{\partial w}{\partial x_i}(x-q_j),
\end{align}
where $a_{j,i}^{\e}$ are some constant coefficients.
\medskip

Next we find a good approximation of $\frac{\partial w}{\partial x_i}(x-q_j).$
Note that $A_{\e,j}(x)\sim w(x-q_j)$ in $H^2(\BB)$, and $A_{\e,j}$ satisfies
\begin{equation*}
\Delta A_{\e,j}-\mu(\e x)A_{\e,j}+\frac{A_{\e,j}}{H_{\e}^2}+h.o.t.=0.
\end{equation*}
Then we find $\frac{\partial A_{\e,j}}{\partial x_i}$ satisfies
\begin{align}
\label{6.6}
\Delta\frac{\partial A_{\e,j}}{\partial x_i}-\mu(\e x)\frac{\partial A_{\e,j}}{\partial x_i}+2\frac{A_{\e,j}}{H_{\e}}\frac{\partial A_{\e,j}}{\partial x_i}
-\frac{A_{\e,j}^2}{H^2_{\e}}\frac{\partial H_{\e}}{\partial x_i}-\e\mu'(\e x)\frac{x_i}{r}A_{\e,j}+h.o.t.=0,
\end{align}
and we have $\frac{\partial A_{\e,j}}{\partial x_i}=(1+o(1))\frac{\partial w}{\partial x_i}(x-q_j).$

We now decompose
\begin{align}
\label{6.7}
\phi_{\e}=\sum_{j=1}^k\sum_{i=1}^2a_{j,i}^{\e}\frac{\partial A_{\e,j}}{\partial x_i}+\phi_{\e}^{\perp}
\end{align}
with complex numbers $a_{j,i}^{\e}$, where
\begin{align}
\label{6.8}
\phi_{\e}^{\perp}\perp\tilde K_{\e,\q}:=\mathrm{Span}\left\{\frac{\partial A_{\e,j}}{\partial x_i}\mid j=1,2,\cdots,k,~i=1,2\right\}.
\end{align}
Our idea is to show that this is a good choice because the error $\phi^{\perp}_{\e}$ is small in a suitable norm and thus can be neglected.

For $\psi_{\e}$, we make the following decomposition according to $\phi_{\e}$
\begin{align}
\label{6.9}
\psi_{\e}=\sum_{j=1}^k\sum_{i=1}^2a_{j,i}^{\e}\psi_{\e,j,i}+\psi_{\e}^{\perp},
\end{align}
where $\psi_{\e,j,i}$ is the unique solution of the following problem,
\begin{equation}
\label{6.10}
\Delta\psi_{\e,j,i}-\sigma^2(1+\tau\lambda_{\e})\psi_{\e,j,i}+2\xi_{\e}A_{\e,j}\frac{\partial A_{\e,j}}{\partial x_i}=0~\mathrm{in}~B_{R/\varepsilon},
\end{equation}
and
\begin{equation}
\label{6.11}
\Delta\psi_{\e}^{\perp}-\sigma^2(1+\tau\lambda_{\e})\psi_{\e}^{\perp}+2\xi_{\e}A_{\e}\phi_{\e}^{\perp}=0
~\mathrm{in}~B_{R/\varepsilon}.
\end{equation}
Suppose that $\|\phi_{\e}\|_{H^2(B_{R/\varepsilon})}=1$. We have
$a_{j,i}^{\e}=\frac{\int_{B_{R/\varepsilon}}\phi_{\e}\frac{\partial A_{\e,j}}{\partial x_i}}{\int_{\B}(\frac{\partial w}{\partial x_1})^2},$
therefore, we get $|a_{j,i}^{\e}|\leq C.$
\medskip

Substituting the decomposition of $\phi_{\e}$ and $\psi_{\e}$ into the first equation in (\ref{5.1}), we have
\begin{equation}
\begin{aligned}
\label{6.12}
&\sum_{j=1}^k\sum_{i=1}^2a_{j,i}^{\e}\frac{A_{\e,j}^2}{H_{\e}^2}\Big(-\psi_{\e,j,i}+\frac{\partial H_{\e}}{\partial x_i}\Big)
+\e\sum_{j=1}^k\sum_{i=1}^2a_{j,i}^{\e}\mu_i(\e x)A_{\e,j}\\
&\quad +\Delta\phi_{\e}^{\perp}-\mu(\e x)\phi_{\e}^{\perp}+2\frac{A_{\e}}{H_{\e}}\phi_{\e}^{\perp}
-\frac{A_{\e}^2}{H_{\e}^2}\psi_{\e}^{\perp}-\lambda_{\e}\phi^{\perp}_{\e}+h.o.t.\\
&=\lambda_{\e}\sum_{j=1}^k\sum_{i=1}^2a_{j,i}^{\e}\frac{\partial A_{\e,j}}{\partial x_i},
\end{aligned}
\end{equation}
where $\mu_i=\frac{\partial\mu}{\partial y_i}.$ We set
\begin{align}
\label{6.13}
\mathcal{I}_1={\sum_{j=1}^k\sum_{i=1}^2a_{j,i}^{\e}\frac{A_{\e,j}^2}{H^2_{\e}}
\left(-\psi_{\e,j,i}+\frac{\partial H_{\e}}{\partial x_i}\right)}
+\e\sum_{j=1}^k\sum_{i=1}^2a_{j,i}^{\e}\mu_i(\e x)A_{\e,j},
\end{align}
and
\begin{align}
\label{6.14}
\mathcal{I}_2=\Delta\phi_{\e}^{\perp}-\mu(\e x)\phi_{\e}^{\perp}+2\frac{A_{\e}}{H_{\e}}\phi_{\e}^{\perp}
-\frac{A_{\e}^2}{H_{\e}^2}\psi_{\e}^{\perp}-\lambda_{\e}\phi_{\e}^{\perp}.
\end{align}
\medskip

Next, we shall first derive the estimate for $\phi_{\e}^\perp.$ Using (\ref{6.12}), since $\phi_{\e}^\perp\perp\tilde{K}_{\e,\q}$, then similar to the proof of Proposition \ref{pr3.1}, we have that
\begin{align}
\label{6.15}
\|\phi^\perp_{\e}\|_{H^2(B_{R/\varepsilon})}\leq C\|\mathcal{I}_1\|_{L^2(B_{R/\varepsilon})}.
\end{align}
So our aim it to estimate the term $\mathcal{I}_1$.
We note that
\begin{align*}
\frac{\partial H_{\e}}{\partial x_i}=~&\xi\int_{B_{R/\varepsilon}}\frac{\partial}{\partial x_i}G_{\sigma}(x,z)A^2_{\e}(z)\mathrm{d}z\\
=~&\xi\int_{B_{R/\varepsilon}}\frac{\partial}{\partial x_i}(K_{\sigma}(x,z)+H_{\sigma}(x,z))A^2_{\e}(z)\mathrm{d}z+h.o.t.
\end{align*}
and (for $\tau=0$)
\begin{align*}
\psi_{\e,j,i}(x)=~&2\xi\int_{B_{R/\varepsilon}}G_{\sigma}(x,z)A_{\e,j}\frac{\partial A_{\e,j}}{\partial z_i}\mathrm{d}z\nonumber\\
=~&\xi\int_{B_{R/\varepsilon}}(K_{\sigma}(x,z)+H_{\sigma}(x,z)+o(\sigma^2))\frac{\partial}{\partial z_i}A_{\e,j}^2,
\end{align*}
where $K_{\sigma}(x,z)$ and $H_{\s}(x,z)$ refer to the singular and regular part of the Green function $G_{\sigma}(x,z),$ respectively.
For $\tau>0$ the last formula
should use $G_{\s_{\lambda_{\e}}}$ instead of $G_{\s}$.  Since $\lambda_\e\to 0$ it can be shown by comparing the two Green functions that the error from this term does not contribute to the small eigenvalues in leading order. We omit the details.

Then from the above two formulas, we have
\begin{align*}
\frac{\partial H_{\e}}{\partial x_i}-\psi_{\e,j,i}(x)=
~&\xi\int_{B_{R/\varepsilon}}\frac{\partial}{\partial x_i}K_{\s}(x,z)A_{\e,j}^2(z)-K_{\s}(x,z)\frac{\partial}{\partial x_i}A_{\e,j}^2\mathrm{d}z\nonumber\\
~&+\xi\int_{B_{R/\varepsilon}}\frac{\partial}{\partial x_i}H_{\s}(x,z)A_{\e,j}^2(z)-H_{\s}(x,z)\frac{\partial}{\partial x_i}A_{\e,j}^2\mathrm{d}z\nonumber\\
~&+\xi\int_{B_{R/\varepsilon}}\frac{\partial}{\partial x_i}\sum_{l\neq j}G_{\s}(x,z)A_{\e,l}^2\mathrm{d}z+h.o.t..
\end{align*}
Using the fact that
{\begin{align}
\label{6.16}
\frac{\partial}{\partial x_i}K_{\s}(x,z)+\frac{\partial}{\partial z_i}K_{\s}(x,z)=h.o.t., ~\mathrm{for}~x\neq z,
\end{align}}
and integrating by parts, we get
\begin{align}
\label{6.17}
\frac{\partial H_{\e}}{\partial x_i}-\psi_{\e,j,i}(x)=\xi\int_{\B}w^2\left(\frac{\partial}{\partial x_i}F_j(x)+h.o.t.\right),
\end{align}
where $F_j(x)=H_{\s}(x,q_j)+\sum_{l\neq j}G_{\s}(x,q_l).$ Then from (\ref{6.13}), we get
\begin{align*}
\mathcal{I}_1=a_{j,i}^{\e}\left(\xi\frac{\partial F_j(x)}{\partial x_i}\int_{\B}w^2+\e\mu'(\e x)\frac{x_i}{r}w(x)+h.o.t.\right).
\end{align*}
From the proof of Theorem \ref{th1.1}, we observe that
\begin{align*}
\xi\int_{\BB}w^2\frac{\partial F_j(x)}{\partial x_i}w^2(x)+\e\mu'(\e x)\frac{x_i}{r}w(x)=o(\e^2R_{\e}).
\end{align*}
Hence, we have
\begin{align}
\label{6.18}
\|\mathcal{I}_1\|_{L^2(\BB}=o(\e^2R_{\e}\sum_{j=1}^k\sum_{i=1}^2|a_{j,i}^{\e}|)
\end{align}
and
\begin{align}
\label{6.19}
\|\phi_{\e}^\perp\|_{H^2(\BB)}\leq C\|\mathcal{I}_1\|_{L^(\BB)}=o(\e^2R_{\e}\sum_{j=1}^k\sum_{i=1}^2|a_{j,i}^{\e}|).
\end{align}
Using the equation for $\psi_{\e}^\perp$ and (\ref{6.19}), we obtain that
\begin{align}
\label{6.20}
\psi_{\e}^\perp(x)=o(\e^2 R_{\e}\sum_{j=1}^k\sum_{i=1}^2|a_{j,i}^{\e}|).
\end{align}
We calculate
{\begin{align}
\label{6.21}
\int_{\BB}\mathcal{I}_2\frac{\partial A_{\e,l}}{\partial x_m}\mathrm{d}z=~&
\int_{\BB}\left(\frac{A_{\e,l}^2}{H_{\e}^2}\left(\frac{\partial H_{\e}}{\partial x_m}\phi_{\e}^\perp
-\frac{\partial A_{\e,l}}{\partial x_m}\psi_{\e}^{\perp}\right)\right)-\lambda_{\e}\int_{\BB}\phi_{\e}^\perp\frac{\partial A_{\e,l}}{\partial x_m}\nonumber\\
=~&\int_{\BB}\frac{A_{\e,l}^2}{H_{\e}^2}\left(\frac{\partial H_{\e}}{\partial x_m}(q_l+x)-\frac{\partial H_{\e}}{\partial x_m}(q_l)\right)\phi_{\e}^\perp\nonumber\\
&+\int_{\BB}\frac{A_{\e,l}^2}{H_{\e}^2}\left(\frac{\partial H_{\e}}{\partial x_m}(q_l)\right)\phi_{\e}^\perp
-\lambda_{\e}\int_{\BB}\phi_{\e}^{\perp}\frac{\partial A_{\e,l}}{\partial x_m}\nonumber\\
&-\int_{\BB}\frac{A_{\e,l}^2}{H_{\e}^2}\frac{\partial A_{\e,l}}{\partial x_m}(\psi_{\e}^\perp(q_l+y)-\psi_{\e}^\perp(q_l))\nonumber\\
&-\lambda_{\e}\int_{\BB}\psi_{\e}^\perp(q_l)\frac{A_{\e,l}^2}{H_{\e}^2}\frac{\partial A_{\e,l}}{\partial x_m}\nonumber\\
=~&o\left(\s\e^2R_{\e}\sum_{j=1}^k\sum_{i=1}^2|a_{j,i}^{\e}|\right).
\end{align}}

Then we study the algebraic equation for $a_{j,i}^{\e}$. Multiplying both sides of (\ref{6.12}) by $\frac{\partial A_{\e,l}}{\partial x_m}$ and integrating over $\B$, we obtain
\begin{align*}
r.h.s.=~&\lambda_{\e}\sum_{j=1}^k\sum_{i=1}^2a_{j,i}^{\e}\int_{\BB}\frac{\partial A_{\e,j}}{\partial x_i}\frac{\partial A_{\e,l}}{\partial x_m}\nonumber\\
=~&\lambda_{\e}\sum_{j=1}^k\sum_{i=1}^2a_{j,i}^{\e}\delta_{jl}\delta_{im}\int_{\BB}\left(\frac{\partial w}{\partial z_1}\right)^2\mathrm{d}z(1+o(1))\nonumber\\
=~&\lambda_{\e}a_{l,m}^{\e}\int_{\BB}\left(\frac{\partial w}{\partial z_1}\right)^2\mathrm{d}z(1+o(1)).
\end{align*}
For the left hand side, we have
\begin{align}
\label{6.22}
l.h.s.=~&\sum_{j=1}^k\sum_{i=1}^2a_{j,i}^{\e}\int_{\BB}\frac{A_{\e}^2}{H_{\e}^2}\left(\frac{\partial H_{\e}}{\partial x_i}-\psi_{\e,j,i}\right)\frac{\partial A_{\e,l}}{\partial x_m}\nonumber\\
&+\sum_{j=1}^k\sum_{i=1}^2a_{j,i}^{\e}\int_{\BB}\e\mu'(\e x)\frac{x_i}{r}A_{\e,j}\frac{\partial A_{\e,l}}{\partial x_m}+\int_{\BB}\mathcal{I}_2\frac{\partial A_{\e,l}}{\partial x_m}.
\end{align}
Using (\ref{6.17}), we obtain
\begin{align}
\label{6.23}
l.h.s.=~&\sum_{j=1}^k\sum_{i=1}^2a_{j,i}^{\e}\int_{\BB}w^2\frac{\partial w}{\partial y_m}y_m\xi\int_{\BB}w^2(\frac{\partial^2}{\partial{q_{l,m}}\partial q_{j,i}}F(\q))\nonumber\\
&+\sum_{j=1}^k\sum_{i=1}^2a_{j,i}^{\e}\int_{\BB}\e^2\mu''(\e x)x_iA_{\e,j}\frac{\partial A_{\e,l}}{\partial x_m}\nonumber\\
&+o\left(\s\e^2R_{\e}\sum_{j=1}^k\sum_{i=1}^2|a_{j,i}^{\e}|\right).
\end{align}
From (\ref{6.22}) and (\ref{6.23}), we have
\begin{align}
\label{6.24}
l.h.s.=~&-c_1\xi\sum_{j=1}^k\sum_{i=1}^2a_{j,i}^{\e}\frac{\partial^2}{\partial q_{l,m}\partial q_{j,i}}F(\q)+c_2\e^2\sum_{j=1}^k\sum_{i=1}^2a_{j,i}^{\e}\delta_{jl}\delta_{mi}\mu''(\e q_i)+h.o.t.,
\end{align}
where $c_1,c_2$ are introduced in (\ref{4.4}) and (\ref{4.8}). Combining the $l.h.s.$ and $r.h.s.$, we have
\begin{align}
\label{6.25}
&\sum_{j=1}^k\sum_{i=1}^2a_{j,i}^{\e}\left(-c_1\xi\frac{\partial^2}{\partial q_{l,m}\partial q_{j,i}}F(\q)+c_2\e^2\delta_{jl}\delta_{mi}\mu''(\e q_j)\right)+h.o.t.\nonumber\\
&=~\lambda_{\e}a^{\e}_{l,m}\int_{\mathbb{R}^2}\left(\frac{\partial w}{\partial z_1}\right)^2\mathrm{d}z(1+o(1)).
\end{align}
From (\ref{6.25}), we see that the small eigenvalues with $\lambda_{\e}\rightarrow0$ and are related to the eigenvalue of the $2k\times 2k$ matrix $M_{\mu}(\q)$, where the $(j+ki,l+km)$-th component is the following
\begin{align*}
(M_{\mu}(\q))_{j+ki,l+km}=
-c_1\xi\frac{\partial^2}{\partial q_{l,m}\partial q_{j,i}}F(\q)+c_2\e^2\delta_{jl}\delta_{mi}\mu''(\e q_j),~1\leq j,l\leq k,1\leq i,m\leq2.
\end{align*}
In section 6
this matrix has been studied further and its eigenvectors and eigenvalues have been computed explicitly.

\vspace{1cm}


\begin{thebibliography}{99}


\bibitem{D} E. N. Dancer,  On stability and Hopf bifurcations for chemotaxis systems. IMS Workshop on Reaction-Diffusion Systems. {\em Methods Appl. Anal.} 8 (2001), no. 2, 245-256.

\bibitem{dkw} M. Del Pino, M. Kowalczyk and J. Wei, Multi-bump ground states of the Gierer-Meinhardt system in $R^2$. {\em Ann. Inst. H. Poincar$\acute{e}$ Anal. Non Lin$\acute{e}$aire} 20 (2003), no. 1, 53-85.

\bibitem{dwy} M. Del Pino, J. Wei and W. Yao, Infinitely many positive solutions
of nonlinear Schr\"odinger equations with non-symmetric potentials

\bibitem{dkv} A. Doelman, T. J. Kaper and H. van der Ploeg, Spatially periodic and aperiodic multi-pulse patterns in the one-dimensional Gierer-Meinhardt equation. {\em Methods Appl. Anal.} 8 (2001),
    387-414.

\bibitem{ei-wei-2002} S.-I. Ei and J.C. Wei,
Dynamics of Metastable Localized Patterns and
Its Application to the Interaction of Spike
Solutions for the Gierer-Meinhardt Systems
in Two Spatial Dimensions, {\em Japan J. Indust. Appl. Math.} 19 (2002), 181-226.

\bibitem{gm} A. Gierer, H. Meinhardt, A theory of biological pattern formation, Kybernetik (Berlin) 12 (1972) 30-39.

\bibitem{gw1} C. Gui, J. Wei, Multiple interior peak solutions for some singular perturbation problems, {\em J. Differential Equations} 158 (1999) 1-27.

\bibitem{gw2} C. Gui, J. Wei, On multiple mixed interior and boundary peak solutions for some singularly perturbed Neumann problems, {\em Canad. J. Math.} 52 (2000) 522-538.

\bibitem{gww} C. Gui, J. Wei and M. Winter, Multiple boundary peak solutions for some singularly perturbed Neumann problems, {\em Ann. Inst. H. Poincar$\acute{e}$ Anal. Non Lin$\acute{e}$aire} 17 (2000) 47-82.

\bibitem{h1} L. G. Harrison, Kinetic Theory of Living Pattern, Cambridge University Press, New York and Cambridge, 1993.

\bibitem{hk} M. Herschowitz-Kaufman, Bifurcation analysis of nonlinear reaction-diffusion equations II, steady-state
solutions and comparison with numerical simulations, {\em Bull. Math. Biol.} 37 (1975), 589-636.

\bibitem{iww} D. Iron, M. Ward and J. Wei, The stability of spike solutions to the one-dimensional Gierer-Meinhardt model, {\em Phys. D} 150 (2001), 25-62.

\bibitem{ks} I. Kra and S. R. Simanca, On circulant matrices, {\em Notices of the AMS} 59 (2012), 368-377.

\bibitem{ln} C. S. Lin, W. M. Ni, On the diffusion coefficient of a semilinear Neumann problem. Calculus of variations and partial differential equations (Trento, 1986), 160-174, Lecture Notes in Math., 1340, Springer, Berlin, 1988.


\bibitem{lh} M. J. Lyons, L. G. Harrison, B. C. Lakowski and T. C. Lacalli, Reaction diffusion modelling of biological pattern formation: application to the embryogenesis of Drosophila melanogaster, {\em Can. J. Physics} 68 (1990), 772-777.

\bibitem{m1} H. Meinhardt, The Algorithmic Beauty of Sea Shells, 2nd Edition, Springer, Berlin, 1998.

\bibitem{m2} H. Meinhardt, Models of Biological Pattern Formation, Academic Press, London, 1982.

\bibitem{nt2} W. M. Ni, I. Takagi, On the shape of least energy solution to a semilinear Neumann problem, {\em Comm. Pure Appl. Math.} 41 (1991) 819-851.

\bibitem{ta1} I. Takagi, Point-condensation for a reaction-diffusion system, {\em J. Differential Equations.} 61
(1986), 208-249.

\bibitem{t} A. M. Turing, The chemical basis of a morphogenesis, Philos. {\em Trans. Roy. Soc. London Ser. B} 237 (1952) 37-72.

\bibitem{wmhgm}
M. J. Ward, D. McInerney, P. Houston, D. Gavaghan, P. K. Maini,
The dynamics and pinning of a spike for a reaction-diffusion system,
{\em SIAM J. Appl. Math.} 62  (2002),  1297--1328.

\bibitem{waw} M. J. Ward, J. Wei, Asymmetric spike patterns for the one-dimensional Gierer-Meinhardt
model: Equilibria and stability. {\em European J. Appl. Math.} 13 (2002), 283-320.

\bibitem{w2} J. Wei, On single interior spike solutions of Gierer-Meinhardt system: uniqueness and spectrum estimates, {\em European J. Appl. Math.} 10 (1999) 353-378.

\bibitem{ww1} J. Wei, M. Winter, On the two-dimensional Gierer-Meinhardt system with strong coupling, {\em SIAM J. Math. Anal.} 30 (1999) 1241-1263.

\bibitem{ww2} J. Wei, M. Winter, On multiple spike solutions for the two-dimensional Gierer-Meinhardt system: the strong coupling case, {\em J. Differential Equations} 178 (2002) 478-518.

\bibitem{ww3} J. Wei, M. Winter, Spikes for the two-dimensional Gierer-Meinhardt system: the weak coupling case, {\em J. Nonlinear Science} 6 (2001) 415-458.

\bibitem{ww6} J. Wei, M. Winter, Existence, classification and stability analysis of multiple-peaked solutions for the Gierer-Meinhardt system in $\mathbb{R}$. {\em Methods Appl. Anal.} 14 (2007), no. 2, 119-163.

\bibitem{ww4} J. Wei, M. Winter, On the Gierer-Meinhardt system with precursors. {\em Discrete Contin. Dyn. Syst.} 25 (2009), no. 1, 363-398.

\bibitem{ww55}  J. Wei and M. Winter, Stability  of cluster solutions in a cooperative consumer chain model, {\it J. Math. Biol.} 68 (2014), 1--39.

\bibitem{ww-book} J. Wei and M. Winter, {\em Mathematical Aspects of Pattern Formation in Biological Systems}, Applied Mathematical Sciences, Vol. 189, Springer, London, 2014.

\bibitem{ww5} J. Wei, M. Winter, Stable spikes clusters for the one-dimensional Gierer-Meinhardt system, {\em Eur. J. Appl. Math.}, appeared electronically (2016).


\end{thebibliography}
\end{document}